\newcommand{\GI}[2][]{\sidenote[colback=yellow!20]{\textbf{GI\xspace #1:} #2}}
\newcommand{\JN}[2][]{\sidenote[colback=orange!20]{\textbf{JN\xspace #1}: #2}}
\newcommand{\RP}[2][]{\sidenote[colback=cyan!20]{\textbf{RP\xspace #1}: #2}}
\newcommand{\dvx}{\nabla_x \cdot}
\newcommand{\dvy}{\nabla_y \cdot}
\newcommand{\II}{\mathit{II}}
\newcommand{\sfrac}[2]{#1 / #2}
\newdelim{\qv}{\langle}{\rangle}
\colorlet{spinefill}{cyan!15!white}
\renewcommand{\setminus}{-}
\newcommand{\no}{\nonumber}
\newcommand{\bi}{\begin{itemize}}
\newcommand{\ei}{\end{itemize}}
\newcommand{\commentout}[1]{\error}
\newcommand{\Rm}{{\mathbb R}}
\newcommand{\Zm}{{\mathbb Z}}
\newcommand{\Em}{\E}
\newcommand{\Pm}{\P}
\newcommand{\p}{\partial}
\newcommand{\ep}{\varepsilon}
\begin{document}
\title[Anomalous diffusion in combs]{Anomalous diffusion in comb-shaped domains and graphs.}

\author[Cohn]{Samuel Cohn\textsuperscript{1}}
\address{%
  \textsuperscript{1} Department of Mathematical Sciences,
  Carnegie Mellon University,
  Pittsburgh, PA 15213.}
\email{samuelcohn032@gmail.com}

\author[Iyer]{Gautam Iyer\textsuperscript{1}}
\email{gautam@math.cmu.edu}

\author[Nolen]{James Nolen\textsuperscript{2}}
\address{%
  \textsuperscript{2} Duke University,
  Department of Mathematics,
  243 Physics Building,
  Durham, NC 27708.}
\email{nolen@math.duke.edu}

\author[Pego]{Robert L. Pego\textsuperscript{1}}
\email{rpego@cmu.edu}
\thanks{%
  This material is based upon work partially supported by
  the National Science Foundation under grants
  DMS-1252912, 
  DMS-1351653, 
  DMS-1515400, 
  DMS-1814147, 
  and the Center for Nonlinear Analysis.
}
\begin{abstract}
  In this paper we study the asymptotic behavior of Brownian motion in both comb-shaped planar domains, and comb-shaped graphs.
  We show convergence to a limiting process when both the spacing between the teeth \emph{and} the width of the teeth
  vanish at the same rate. 
  The limiting process exhibits an anomalous diffusive behavior and can be described as a Brownian motion time-changed by the local time of an independent sticky Brownian motion.
  In the two dimensional setting the main technical step is an oscillation estimate for a Neumann problem, which we prove here using a probabilistic argument.
  In the one dimensional setting we provide both a direct SDE proof, and a proof using the trapped Brownian motion framework in Ben Arous \etal (Ann.\ Probab.\ '15).
\end{abstract}
\subjclass[2010]{Primary
  60G22; 
  Secondary
  35B27.
}
\maketitle
\section{Introduction.}\label{s:intro}

Diffusion in comb-like structures arises in the study of several applications such as the study of linear porous media, microscopically disordered fluids, transport in dendrites and tissues (see for instance~\cites{Young88,ArbogastDouglasEA90,ShowalterWalkington91,BressloffEarnshaw07,DagdugBerezhkovskiiEA07}
and references therein).
Our aim in this paper is to study idealized, periodic, comb-shaped domains in $\R^2$ under scaling regimes where an anomalous diffusive behavior is observed.
We also study scaling limits of a skew Brownian motion on an infinite comb-shaped graph.
In both scenarios we show that under a certain scaling the limiting process is a Brownian motion time-changed by the local time of an independent sticky reflected Brownian motion.
We describe each of these scenarios separately in Sections~\ref{s:ifatcomb} and~\ref{s:ithincomb} below.

\subsection{Anomalous Diffusion in Comb-Shaped Domains.}\label{s:ifatcomb}

Let $h_0 \in (0, \infty]$, and $\alpha, \epsilon > 0$, and let $\Omega_\epsilon \subset \R^2$ be the fattened comb-shaped domain defined by
\begin{equation}\label{e:OmegaEp}
  \Omega_\epsilon = \set{
    (x, y) \in \R^2 \; \st -\epsilon < y < h_0 \one_{B(\epsilon \Z, \alpha \epsilon^2/2)}(x)
  }\,,
\end{equation}
where $B(\epsilon \Z, \alpha \epsilon^2/2) \subseteq \R$ denotes the $\alpha \epsilon^2/2$ neighborhood of $\epsilon \Z$, and $\one$ denotes the indicator function.
Figure~\ref{f:fatcomb} shows a picture of the domain~$\Omega_\epsilon$.
We refer to the region where $-\epsilon < y < 0$ as the spine; $\Omega_\epsilon$ also has teeth of height $h_0$ and width~$\alpha \epsilon^2$, which are spaced $\epsilon$ apart.
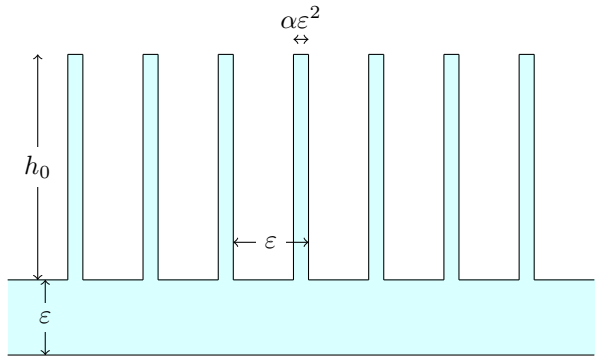
\begin{figure}[hbt]
  \begin{center}
    \begin{tikzpicture}
      \fill[spinefill] (-3.9,0) rectangle (3.9,1);
      \draw (-3.9,0) -- (3.9,0);
      \foreach \x [count=\n]in {-3.1,-2.1,-1.1,-.1,.9,1.9,2.9}{
	\fill[spinefill] (\x,1) rectangle ++(.2,3);
	\draw (\x,1) -- ++(-.8,0);
	\draw (\x,4) -- ++(.2,0);
      }
      \foreach \x [count=\n]in {-3.1,-2.9,-2.1,-1.9,-1.1,-.9,-.1,.1,.9,1.1,1.9,2.1,2.9,3.1}{
	\draw (\x,1) -- ++(0,3);
      }

      \draw (3.1,1) -- ++(.8,0);

      \draw[<->] (-.1,4.2) -- (.1,4.2);
      \draw node at (0,4.5) {$\alpha \epsilon^2$};

      \coordinate (A) at (-.9,1.5);
      \coordinate (B) at ( 0.1,1.5);
      \draw[<->] (A) -- (B) node[midway,fill=white] {$\epsilon$};

      \coordinate (A) at (-3.5,1);
      \coordinate (B) at ( -3.5,4);
      \draw[<->] (A) -- (B) node[midway,fill=white] {$h_0$};

      \coordinate (A) at (-3.4,1);
      \coordinate (B) at ( -3.4,0);
      \draw[<->] (A) -- (B) node[midway,fill=spinefill] {$\epsilon$};

    \end{tikzpicture}
    \caption{Image of the comb-shaped domain~$\Omega_\epsilon$.
      The teeth have width $\alpha \epsilon^2$ and height $h_0$.
      The spine has width $\epsilon$, and the teeth are spaced a distance of~$\epsilon$ apart.}
    \label{f:fatcomb}
  \end{center}
\end{figure}

Let $Z^\epsilon = (X^\epsilon, Y^\epsilon)$ be a Brownian motion in $\Omega_\epsilon$ that is reflected normally on the boundary $\partial \Omega_\epsilon$.
Our aim is to study the limiting behavior of~$Z^\epsilon$ as~$\epsilon \to 0$.
This is an idealized, two dimensional, version of the arterial flow models considered by Young~\cite{Young88}.
Note that the process $Z^\epsilon$ may travel large horizontal distances when it is in the spine, but travels only negligible horizontal distances when it is ``trapped'' inside the teeth.
From the shape of~$\Omega_\epsilon$, one expects that the chance $Z^\epsilon$ wanders into the teeth from the spine is of order $\alpha \epsilon$.
Since the teeth are spaced $\epsilon$ apart, the process $Z^\epsilon$ encounters $O(1/\epsilon)$ teeth after traveling an~$O(1)$ distance horizontally.
These balance, and after large horizontal distances, the process $Z^\epsilon$ spends comparable amounts of time in the spine and in the teeth.
This leads us to expect that the limiting horizontal behavior of~$Z^\epsilon$ should be described by a Brownian motion that is time-changed so that it only moves when the process is in the spine -- this is our main result.

To state the result, we let $\Omega_0 \defeq \Rm \times [0,h_0]$, and let $\pi_\epsilon\colon \Omega_\epsilon \to \Omega_0$ be defined by $\pi_\epsilon(x, y) = (x, y^+)$, where $y^+ = \max\set{y, 0}$ denotes the positive part of~$y$.
Given a probability measure~$\mu^\epsilon$ on~$\Omega_\epsilon$, let~$\pi_\epsilon^*(\mu^\epsilon)$ denote the push forward of~$\mu^\epsilon$,  under the map $\pi_\epsilon$, to a probability measure on $\Omega_0$.
We can now state the main result.



\begin{theorem}\label{t:zlimfat}  
  Let $Z^\epsilon = (X^\epsilon,Y^\epsilon)$ be a normally reflected Brownian motion in $\Omega_\epsilon$ with initial distribution $\mu^\epsilon$.
  If the sequence of measures $(\pi_\epsilon^* (\mu^\epsilon) )$ converges weakly to a probability measure $\mu$ on~$\Omega_0$, then the sequence of processes $Z^{\epsilon,+} \defeq \pi_\epsilon(Z^\epsilon)$ converges weakly as $\epsilon \to 0$.
  \GI[2019-04-18]{I changed $Z^\epsilon$ to $(Z^\epsilon)^+$, and introduced~$\pi_\epsilon^+$.
    I think it needs to be changed everywhere for the fat comb.}
  The limiting process, denoted by $Z = (X,Y)$, can be described as follows.
  The initial distribution of~$Z$ is~$\mu$.
  The process $Y$ is a Brownian motion on $(0, h_0)$, which is normally reflected at $h_0$ if $h_0 < \infty$, and is stickily reflected (with parameter $1/\alpha$) at $0$.
  The process $X$ is a time-changed Brownian motion given by
  \begin{equation}\label{e:Xtc}
    X_t = \bar W_{ \frac{2}{\alpha}L^{Y}_t(0) }\,,
  \end{equation}
  where $\bar W$ is a Brownian motion on $\R$ that is independent of $Y$, and $L^Y(0)$ is the local time of~$Y$ at $0$.
\end{theorem}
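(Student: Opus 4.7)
The strategy is a martingale-problem argument: establish tightness of $\{Z^{\epsilon,+}\}_\epsilon$, identify any subsequential limit with the candidate process $Z$ via its generator, and then conclude by uniqueness of the martingale problem together with the hypothesis that $\pi_\epsilon^*\mu^\epsilon\to\mu$. Tightness in $C([0,T];\Omega_0)$ is standard: each coordinate of the reflected Brownian motion $Z^\epsilon$ is a continuous semimartingale with quadratic variation bounded by $t$, and since $\pi_\epsilon$ is $1$-Lipschitz, the fourth-moment bound $\mathbb{E}|Z^{\epsilon,+}_t-Z^{\epsilon,+}_s|^4\le C|t-s|^2$ and Kolmogorov's criterion give relative compactness of the laws.

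To identify the limit, I would characterise $Z=(X,Y)$ via its generator. Applying It\^o's formula to the SDE description of $Z$, using $d[X]_t=(2/\alpha)\,dL^Y_t(0)$, $[X,Y]\equiv 0$, and the sticky identity $dL^Y_t(0)=\alpha\,\one_{\{Y_t=0\}}\,dt$, one verifies that
\[
F(Z_t)-F(Z_0)-\tfrac{1}{2}\int_0^t\partial_{yy}F(Z_s)\,ds
\]
is a martingale for every $F\in C^2_b(\R\times[0,h_0])$ satisfying the Neumann condition $\partial_yF(x,h_0)=0$ together with the sticky compatibility condition
\[
\partial_{yy}F(x,0)\;=\;2\partial_{xx}F(x,0)+\alpha\,\partial_yF(x,0).
\]
Uniqueness of the associated martingale problem is inherited from the explicit SDE for $Z$ and the independence of $\bar W$ and $Y$. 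It therefore suffices to show that, for every admissible $F$, the analogous quantity with $Z^{\epsilon,+}$ in place of $Z$ is asymptotically a martingale as $\epsilon\to 0$.

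The central step, and the main obstacle, is the construction of a corrector: an $\epsilon$-dependent extension $F^\epsilon\in C^2(\overline{\Omega_\epsilon})$ satisfying (a) $\partial_\nu F^\epsilon\equiv 0$ on $\partial\Omega_\epsilon$, so that It\^o applied to $F^\epsilon(Z^\epsilon_t)$ produces an honest martingale with no boundary term, (b) $\|F^\epsilon-F\circ\pi_\epsilon\|_\infty\to 0$ as $\epsilon\to 0$, and (c) $\tfrac{1}{2}\Delta F^\epsilon$ close to $\tfrac{1}{2}\partial_{yy}F\circ\pi_\epsilon$ with errors that vanish in time-integral against $Z^\epsilon$. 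Heuristically one takes $F^\epsilon(x,y)\approx F(x_n,y)$ in each tooth (with $x_n$ the tooth centre), $F^\epsilon(x,y)\approx F(x,0)$ plus a boundary-layer correction of order $\epsilon$ in the spine, and reconciles the two pieces in a layer of thickness $O(\epsilon)$ near $y=0$; the compatibility condition on $F$ is precisely what allows these matchings to succeed, with the factor $2/\alpha$ emerging from the ratio of spine width $\epsilon$ to tooth-opening width $\alpha\epsilon^2$. The primary technical difficulty is the $L^\infty$ oscillation estimate for the Neumann problem that defines the boundary-layer piece: because of the degenerate geometry (spine width $\epsilon$ versus opening width $\alpha\epsilon^2$), standard elliptic regularity is not directly applicable, and following the hint in the abstract I would prove the estimate probabilistically by coupling two reflected Brownian motions in a single period of $\Omega_\epsilon$ started from different points and controlling the expected coupling time. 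Once the corrector and oscillation estimate are established, applying It\^o to $F^\epsilon(Z^\epsilon_t)$, passing to a weak limit along a subsequence $\epsilon_k\to 0$, and invoking martingale-problem uniqueness complete the proof.
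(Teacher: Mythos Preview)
Your overall architecture---tightness, identification of subsequential limits via the martingale problem for the generator $A=\tfrac12\partial_{yy}$ with a Wentzell boundary condition at $y=0$, and uniqueness---matches the paper's. Two corrections on details: with the paper's normalisation the sticky relation is $2\,dL^Y_t(0)=\alpha\one_{\{Y_t=0\}}\,dt$, so the compatibility condition reads $\partial_{yy}F(x,0)=\partial_{xx}F(x,0)+\alpha\,\partial_yF(x,0)$, without your factor~$2$. And your tightness sketch is too quick: the reflection local times in $dZ^\epsilon=dB_t+dL^{\partial\Omega_\epsilon}_t$ are not uniformly controlled, so a fourth-moment bound does not drop out of $[X^\epsilon]_t\le t$. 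The paper instead proves a pathwise comparison (Lemma~\ref{lem:BrownianComp}): the successive hitting times of $\epsilon(\Z+\tfrac12)$ by $X^\epsilon$ embed a simple random walk whose step durations dominate Brownian exit times from width-$\epsilon/4$ intervals, and this bounds the modulus of continuity of $X^\epsilon$ by that of $4W'$ plus $2\epsilon$.

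The substantive methodological difference is that the paper does \emph{not} build a global corrector $F^\epsilon$ with $\partial_\nu F^\epsilon=0$ on $\partial\Omega_\epsilon$. It applies It\^o to the naive extension $f^\epsilon=f\circ\pi_\epsilon$ and then accepts the resulting boundary local-time terms, at the gate $G$ and on the tooth side-walls $\partial T^\pm$. The work is then concentrated in two balance statements: the spine occupation time matches the gate local time, $\alpha\int_0^t g(X^\epsilon_s)\one_{\{Y^\epsilon_s<0\}}\,ds\approx 2\int_0^t g(X^\epsilon_s)\,dL^G_s$ (Lemma~\ref{l:FCLocalTimeG}), and the left/right wall local times cancel against $\int\partial_{xx}f\,\one_{\{Y^\epsilon>0\}}ds$ (Lemma~\ref{l:FCLocalTimeT}). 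The oscillation estimate you correctly flag (Proposition~\ref{p:uosc1}) is exactly what drives the first balance, but it is not proved by coupling: the paper runs a geometric-series argument on successive visits between the gate $G_0$ and the segments $D'$, the key input being the narrow-escape lower bound $\inf_{d\in D}\P^d(\text{hit }G_0\text{ before }D')\ge c_0/|\log\epsilon|$, obtained from the explicit harmonic function $\log|z|$ in an annulus (see the proof of~\eqref{e:exitp0}). Your corrector route is a legitimate alternative and would repackage these same estimates into the construction of $F^\epsilon$; the paper's approach is more economical in that it isolates the single auxiliary Neumann problem actually needed and avoids a global matched-asymptotics construction.
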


To clarify notation, we follow the normalization convention of~\cite{KaratzasShreve91}, and define local time of $Y$ at $0$ by
\[
  L_t^Y(0)
    = \lim_{\delta \to 0} \frac{1}{2 \delta} \int_0^t \one_{\{0 \leq Y_s \leq \delta \}} \,d \qv{Y}_s
    = \lim_{\delta \to 0} \frac{1}{2 \delta} \int_0^t \one_{\{0 < Y_s \leq \delta \}} \,ds \,.
\]
In the second equality above we note that the strict inequality $0 < Y_s$ in the integrand is crucial, as the process $Y$ spends a non-negligible time at $0$.
Indeed, recall that the sticky reflection of the process $Y$ at $0$ is characterized by the local time relation
\begin{equation*}
  2\, dL^Y_t(0) = \alpha \one_{\set{Y_t = 0}} \, dt\,.
\end{equation*}
Such a process can be constructed explicitly by time changing a reflected Brownian motion, or by using the Hille-Yosida theorem.
We elaborate on this in Section~\ref{s:limitprocess}, below.

We remark that while the statement of Theorem~\ref{t:zlimfat} is intuitive, the proof isn't as simple.
The broad outline of the proof follows techniques introduced by Freidlin and Wentzell (see for instance Theorem 8.2.2 in~\cite{FreidlinWentzell12}) and the structure in~\cites{HairerKoralovEA16,HairerIyerEA18}.
However, the key step in establishing the required estimates requires balancing the time spent by~$Z^\epsilon$ in the spine with the local time at the interface between the teeth and spine.
In order to prove this, we require an oscillation estimate on the solution to a certain Neumann problem (Proposition~\ref{p:uosc1}, below).

To the best of our knowledge, the oscillation estimate we require can not be obtained by standard techniques for the following reasons:
First, for the problem at hand energy methods only provide estimates with domain dependent constants.
Since~$\Omega_\epsilon$ varies with~$\epsilon$ these constants may degenerate as~$\epsilon \to 0$.
Second, since we impose Neumann boundary conditions on the entire boundary we may not easily use techniques based on the comparison principle.
We prove the oscillation estimate here directly by using a probabilistic argument, and this comprises the bulk of the proof of Theorem~\ref{t:zlimfat}.
\medskip

Notice that Theorem~\ref{t:zlimfat} immediately yields the behavior of the variance of the horizontal displacement.
This question has been studied by various authors (see for instance~\cite{BerezhkovskiiDagdugEA14} and references therein), and is of interest as it is an easily computable benchmark indicating anomalous diffusion.
\begin{corollary}\label{c:var}
  If~$h_0 < \infty$ then
  \begin{subequations}
  \begin{gather}
    \label{e:varShort}
    \lim_{t \to 0}
    \lim_{\epsilon \to 0}
    \frac{1}{t} \E^{(x,0)} \abs{X^\epsilon_t - x}^2  = 1\,,
    \\
    \label{e:varLong}
    \lim_{t \to \infty} \lim_{\epsilon \to 0}
    \frac{1}{t} \E^{(x,0)} \abs{X^\epsilon_t - x}^2  = \frac{1}{\alpha h_0 + 1}\,.
  \end{gather}
  \end{subequations}
  If~$h_0 = \infty$, then~\eqref{e:varShort} still holds.
  However, instead of~\eqref{e:varLong} we have
  \begin{equation}\label{e:varLongInf}
    \lim_{t \to \infty} \lim_{\epsilon \to 0} \frac{1}{\sqrt{t}}
      \E^{(x,0)} \abs{X^\epsilon_t - x}^2  = \frac{1}{\alpha} \paren[\Big]{\frac{8}{\pi}}^{1/2}\,.
  \end{equation}
\end{corollary}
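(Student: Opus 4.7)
The plan is to apply Theorem~\ref{t:zlimfat} to pass from $X^\epsilon$ to the limit $X$, then analyze the second moment of $X$ via~\eqref{e:Xtc}. The weak convergence $Z^{\epsilon,+} \to Z$ gives $X^\epsilon_t \to X_t$ in distribution, and a uniform-in-$\epsilon$ moment bound (obtained by decomposing $X^\epsilon$ into a martingale part with quadratic variation at most $t$ and a reflection term controlled by the local time on vertical tooth walls) promotes this to convergence in $L^2$. Conditioning on $Y$ in~\eqref{e:Xtc} and using that $\bar W$ is an independent Brownian motion started at $x$ yields
\begin{equation*}
  \E^{(x,0)}\abs{X_t - x}^2 = \frac{2}{\alpha}\E L^Y_t(0),
\end{equation*}
and the sticky relation $2\,dL^Y_t(0) = \alpha\one_{\{Y_t=0\}}\,dt$ further reduces this to $(2/\alpha)\E L^Y_t(0) = \E\int_0^t \one_{\{Y_s=0\}}\,ds$. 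All three claims thus become statements about the time $Y$ spends at $0$.

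To compute this time I would use the standard time-change construction $Y_s = B_{A^{-1}(s)}$, where $B$ is a reflected Brownian motion on $[0,h_0]$ (or $[0,\infty)$) with local time $\ell_B$ at $0$ and $A(u) = u + (2/\alpha)\ell_B(u)$. A change of variables in $A$ gives $\int_0^t \one_{\{Y_s=0\}}\,ds = t - A^{-1}(t)$ and $L^Y_t(0) = \ell_B(A^{-1}(t))$. All three asymptotic regimes will follow by analyzing $A^{-1}(t)$ via Brownian rescaling $\hat B_u \defeq B_{tu}/\sqrt{t}$, which converts $A^{-1}(t)/t$ into the value at $1$ of the inverse of $\hat A(u) \defeq u + (2/(\alpha\sqrt t))\,\ell_{\hat B}(u)$.

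For the short-time limit~\eqref{e:varShort}, as $t \to 0$ the coefficient $2/(\alpha\sqrt t)$ diverges, forcing $\hat A^{-1}(1) \to 0$ in probability, hence in $L^1$ (since it is bounded by~$1$). Therefore $\E A^{-1}(t) = o(t)$, proving~\eqref{e:varShort} independently of~$h_0$. For~\eqref{e:varLong} with $h_0 < \infty$, I would identify the invariant measure of $Y$ by testing its generator against a candidate $\nu = c_0\delta_0 + c_1\,dy$; the Neumann condition at $h_0$ and the sticky condition at~$0$ force $c_1 = \alpha c_0$, yielding after normalization
\begin{equation*}
  \nu = \frac{1}{1 + \alpha h_0}\bigl(\delta_0 + \alpha\,dy\bigr).
\end{equation*}
Ergodicity on the compact interval $[0,h_0]$ then gives $t^{-1}\int_0^t \one_{\{Y_s=0\}}\,ds \to \nu(\{0\}) = (1 + \alpha h_0)^{-1}$, yielding~\eqref{e:varLong}. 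For~\eqref{e:varLongInf} with $h_0 = \infty$, the same rescaling now makes $2/(\alpha\sqrt t)$ \emph{small}, so $\hat A \to \mathrm{id}$ locally uniformly and $L^Y_t(0)/\sqrt{t} = \ell_{\hat B}(\hat A^{-1}(1)) \to \ell_{\hat B}(1)$ in distribution. By Lévy's identity $\ell_{\hat B}(1) \stackrel{d}{=} \abs{N(0,1)}$, and uniform integrability is supplied by $L^Y_t(0) \le \ell_B(t)$ together with $\ell_B(t)/\sqrt{t} \stackrel{d}{=} \abs{N(0,1)}$, so $\E L^Y_t(0)/\sqrt t \to \sqrt{2/\pi}$; multiplying by $2/\alpha$ gives the constant $\alpha^{-1}(8/\pi)^{1/2}$ of~\eqref{e:varLongInf}.

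The main obstacle is the uniform integrability at the two limit-exchange steps---passing from the weak convergence of $X^\epsilon_t$ provided by Theorem~\ref{t:zlimfat} to convergence of second moments, and passing from distributional convergence after Brownian rescaling to $L^1$ convergence. Both are handled by explicit tail bounds (Burkholder–Davis–Gundy for~$X^\epsilon$, and the half-normal distribution of reflected Brownian local time), so once Theorem~\ref{t:zlimfat} is in place the remaining analysis is a direct calculation.
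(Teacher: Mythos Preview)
Your approach is essentially the same as the paper's: both pass to the limit via Theorem~\ref{t:zlimfat}, reduce $\E^{(x,0)}|X_t-x|^2$ to $(2/\alpha)\E L^Y_t(0)$ using the quadratic variation identity, and then analyze the sticky process through the time change $Y_t = \bar B_{T_t}$ with $T = A^{-1}$ in your notation. For short time and for finite $h_0$ the arguments match closely (the paper also uses $t - T_t$ and ergodicity of the underlying reflected Brownian motion rather than identifying the invariant measure of $Y$ directly, but this is the same computation).

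The one genuine difference is the $h_0=\infty$ long-time case: the paper quotes an explicit distributional identity for the occupation time of sticky Brownian motion at $0$ (from Howitt), whereas your Brownian rescaling argument, sending $2/(\alpha\sqrt t)\to 0$ so that $\hat A^{-1}(1)\to 1$ and $L^Y_t(0)/\sqrt t \to \ell_{\hat B}(1)\stackrel{d}{=}|N(0,1)|$, is more self-contained and avoids the citation. Both routes give the constant $\alpha^{-1}(8/\pi)^{1/2}$. You are also more explicit than the paper about the uniform integrability needed to pass from weak convergence of $X^\epsilon_t$ to convergence of second moments; the paper simply asserts this step.
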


Here we clarify that the notation $\E^{(x, 0)}$ refers to the expectation under the probability measure~$\P^{(x,0)}$ under which $(X^\epsilon_0, Y^\epsilon_0) = (x, 0)$ almost surely.
Note that when $h_0 < \infty$, the variance is asymptotically linear with slope~$1$ at short time, and asymptotically linear at long time with slope strictly smaller than~$1$.
On the other hand, when $h_0 = \infty$ the variance is asymptotically linear for short time, and asymptotically $O(\sqrt{t})$ for long time, indicating an anomalous sub-diffusive behavior on long time scales.
This was also previously observed by Young~\cite{Young88}.

In addition to the variance, another quantity of interest is the limiting behavior of the probability density function.
This is essentially a PDE homogenization result that also follows quickly from Theorem~\ref{t:zlimfat}.
Explicitly, let~$u^\epsilon$ represent the concentration density of a scalar diffusing in the region $\Omega_\epsilon$.
When the diffusivity is normalized to be~$1/2$, and the boundaries are impermeable the time evolution of~$u^\epsilon$ is governed by the heat equation with Neumann boundary conditions:
\begin{subequations}
\begin{alignat}{2}
  \label{e:heat1}
  \span
    \partial_t u^\epsilon - \frac{1}{2} \lap u^\epsilon = 0
  &\qquad&
    \text{in } \Omega_\epsilon
  \\
  \label{e:heat2}
  \span
    \partial_\nu u^\epsilon = 0
  &&
    \text{on } \partial \Omega_\epsilon\,.
\end{alignat}
\end{subequations}
Using Theorem~\ref{t:zlimfat} we can show that $u^\epsilon$ converges as $\epsilon \to 0$, and obtain effective equations for the limit.
The same equations were also obtained heuristically by Young~\cite{Young88}.
\begin{corollary}\label{c:pde}
  Let $u_0 \colon \Omega_0 \to \R$ be a bounded continuous function, and let $u^\epsilon$ be the solution to~\eqref{e:heat1}--\eqref{e:heat2} with initial data~$u_0 \circ \pi_\epsilon$.
  Let $\mu^\epsilon$ be a family of test probability measures on $\Omega_\epsilon$ such that $(\pi_\epsilon^*(\mu^\epsilon))$ converges weakly to a probability measure~$\mu$ on~$\Omega_0$.
  Then for any $t > 0$ we have
  \begin{equation}\label{e:uepConv}
    \lim_{\epsilon \to 0} \int_{\Omega_\epsilon} u^\epsilon(z, t) \, d\mu^\epsilon(z)
      = \int_{\Omega_0} u(z, t) \, d\mu(z)\,,
  \end{equation}
  where $u \colon \Omega_0 \to \R$ is the unique solution of the system
  \begin{subequations}
  \begin{alignat}{2}
    \label{e:rho1}
    \span
      \partial_t u - \frac{1}{2} \partial_y^2 u = 0\,,
    &\qquad&
      \text{for } t > 0,\ y \in (0, h_0)\,,
    \\
    \label{e:rho2}
    \span
      \alpha \partial_y u + \partial_x^2 u = \partial_y^2 u\,,
    &&
      \text{when }y = 0\,,
    \\
    \label{e:rho3}
    \span
      \partial_y u = 0 
    &&
      \text{when } y = h_0\,,
    \\
    \label{e:rho4}
    \span
      u = u_0
    &&
      \text{when } t = 0\,.
  \end{alignat}
  \end{subequations}
\end{corollary}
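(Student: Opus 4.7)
The plan is to derive Corollary~\ref{c:pde} as a probabilistic consequence of Theorem~\ref{t:zlimfat}, combined with a generator computation for the limit process $Z = (X, Y)$. By the Feynman--Kac representation for the Neumann heat equation, the solution to~\eqref{e:heat1}--\eqref{e:heat2} with initial data $u_0 \circ \pi_\epsilon$ satisfies $u^\epsilon(z, t) = \E^z[u_0(Z^{\epsilon,+}_t)]$. Integrating against $\mu^\epsilon$ and applying the weak convergence asserted by Theorem~\ref{t:zlimfat} yields
\[
  \lim_{\epsilon \to 0} \int_{\Omega_\epsilon} u^\epsilon(z, t) \, d\mu^\epsilon(z)
    = \E^\mu[u_0(Z_t)] = \int_{\Omega_0} u(z, t) \, d\mu(z),
\]
where I define $u(z, t) \defeq \E^z[u_0(Z_t)]$. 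It therefore remains to show that this $u$ is the unique solution of~\eqref{e:rho1}--\eqref{e:rho4}.

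To identify the PDE, I compute the generator $L$ of the Markov process $Z$. Since $\bar W$ is independent of $Y$ and $L^Y(0)$ increases only on $\{Y = 0\}$, one has $d\langle X\rangle_t = (2/\alpha)\, dL^Y_t(0) = \one_{\{Y_t = 0\}}\, dt$, while $d\langle Y\rangle_t = \one_{\{Y_t > 0\}}\, dt$ because $Y$ is stuck at $0$ during its sticky intervals. Applying Itô--Tanaka to $f(X_t, Y_t)$ for smooth $f$ and collecting the absolutely continuous terms produces
\[
  L f(x, y) =
    \begin{cases}
      \tfrac{1}{2} f_{yy}(x, y), & 0 < y < h_0, \\
      \tfrac{1}{2} f_{xx}(x, 0) + \tfrac{\alpha}{2} f_y(x, 0), & y = 0,
    \end{cases}
\]
together with the Neumann condition $f_y(\cdot, h_0) = 0$ from the normal reflection at the top (when $h_0 < \infty$). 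Requiring continuity of $Lf$ at $y = 0$ forces $\tfrac{1}{2} f_{yy}(x, 0) = \tfrac{1}{2} f_{xx}(x, 0) + \tfrac{\alpha}{2} f_y(x, 0)$, which is exactly the sticky matching condition~\eqref{e:rho2}. Dynkin's formula applied to $u(z, t) = \E^z[u_0(Z_t)]$ then yields $\partial_t u = Lu$, producing~\eqref{e:rho1} in the interior together with the boundary and initial conditions~\eqref{e:rho2}--\eqref{e:rho4}.

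Uniqueness for~\eqref{e:rho1}--\eqref{e:rho4} follows either by a standard energy argument (the sticky relation at $y = 0$ provides the flux term needed to close the $L^2$ estimate), or more cleanly by observing that any classical solution is turned into a martingale by Itô, which pins it to $\E^z[u_0(Z_t)]$. The step I expect to demand the most care is regularity: applying Dynkin's formula pointwise presupposes smoothness of $u$, so I would first prove~\eqref{e:uepConv} for smooth $u_0$ (where semigroup smoothing along $Z$ yields enough regularity of $u$), and then extend to bounded continuous $u_0$ by density, using the contractions $\|P_t u_0\|_\infty, \|P^\epsilon_t u_0\|_\infty \leq \|u_0\|_\infty$ to control the approximation error.
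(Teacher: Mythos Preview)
Your proposal is correct and follows essentially the same route as the paper: represent $u^\epsilon$ probabilistically via the Kolmogorov backward equation, pass to the limit using Theorem~\ref{t:zlimfat}, define $u(z,t) = \E^z u_0(Z_t)$, and identify the limiting PDE through the generator of $Z$ (which the paper isolates as Lemma~\ref{l:Zgen}). Your added remarks on uniqueness and on extending from smooth to bounded continuous $u_0$ address points the paper leaves implicit.
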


Since large scale transport only occurs in the $x$-direction, one is often only interested in the limiting behavior in this direction.
This can be obtained by taking the slice of $u$ at $y = 0$, leading to a self contained time fractional equation, similar to the Basset equation~\cite{Basset86}.
We remark that such time fractional PDEs associated with the time-changed diffusions have been studied in more generality in~\cites{BaeumerMeerschaertEA09} (see also~\cites{Cohn18,MagdziarzSchilling15}), and we refer the reader to these papers for the details.

\begin{proposition}\label{p:ftime}
  Let $v(x, t) = u( x, 0, t)$, where $u$ is the solution of~\eqref{e:rho1}--\eqref{e:rho4}.
  Then~$v$ satisfies
  \begin{equation}\label{e:ev}
    \partial_t v + \frac{\alpha}{2} \partial_t^w v - \frac{1}{2} \partial_x^2 v = \frac{\alpha}{2} f\,,
  \end{equation}
  with initial data $v(x, 0) = u_0(x, 0)$.
  The operator $\partial_t^w$ appearing above is a \emph{generalized Caputo derivative}
  defined by
  \begin{equation*}
    \partial_t^w v(x, t) \defeq \int_0^t w(t-s) \partial_t v(x, s) \, ds\,,
  \end{equation*}
  where $w$ is defined by
  \begin{equation*}
    w(t) \defeq
      \frac{2}{h_0} \sum_{k=0}^\infty \exp\paren[\Big]{
	-\frac{(2k + 1)^2 \pi^2 t}{8 h_0^2}
      }\,.
  \end{equation*}
  The function $f$ appearing  on the right of~\eqref{e:ev} can be explicitly determined in terms of $u_0$ by the identity $f = f(x, t) = \partial_y g(x, 0, t)$, where $g = g(x, y, t)$ solves
  \begin{alignat*}{2}
    \span
      \partial_t g - \frac{1}{2} \partial_y^2 g = 0
    &\qquad&
      \text{for } t > 0,\ y \in (0, h_0)\,,
    \\
    \span
      g(x, 0, t) = g(x, h_0, t) = 0
    &&
      \text{for } t > 0\,,
    \\
    \span
      g(x, y, 0) = u_0(x, y) - u_0(x, 0)
    &&
      \text{for } y \in (0, h_0),\ t = 0\,.
  \end{alignat*}
\end{proposition}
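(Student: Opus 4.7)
The plan is to reduce the proposition to a Dirichlet-to-Neumann calculation for the one-dimensional heat equation on $[0,h_0]$. The key observation is that the bulk equation \eqref{e:rho1} applied on the trace $y=0$ forces $\partial_y^2 u|_{y=0} = 2\partial_t v$; substituting this into the boundary condition \eqref{e:rho2} gives
\[
\partial_t v - \tfrac{1}{2}\partial_x^2 v = \tfrac{\alpha}{2}\,\partial_y u(x, 0, t).
\]
It therefore suffices to express the Neumann trace $\partial_y u(x, 0, t)$ in terms of the Dirichlet trace $v$ and the initial datum $u_0$.

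I would carry out this inversion in the Laplace variable $s$, setting $\lambda = \sqrt{2s}$. For fixed $x$, the map $y\mapsto u(x,y,t)$ solves the 1D heat equation on $[0,h_0]$ with time-dependent Dirichlet data $v(x,t)$ at $y=0$, homogeneous Neumann condition at $y=h_0$, and initial condition $u_0(x,\cdot)$. Its Laplace transform $\hat u$ satisfies the ODE
\[
\partial_y^2 \hat u - 2 s\, \hat u = -2\, u_0(x,y), \qquad \hat u|_{y=0} = \hat v, \qquad \partial_y \hat u|_{y=h_0} = 0,
\]
which I would solve by splitting off the homogeneous lift $\hat v(x,s)\,\cosh(\lambda(h_0-y))/\cosh(\lambda h_0)$ and treating the remaining inhomogeneity via its Green's function. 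Differentiating at $y=0$ yields a formula of the shape
\[
\partial_y \hat u(x, 0, s) = -\lambda \tanh(\lambda h_0)\, \hat v(x, s) + \hat F(x, s),
\]
where $\hat F$ is an explicit integral of $u_0$ against the Green kernel and, once the initial-data terms are reorganised, is the Laplace transform of $\partial_y g(x, 0, t)$ for the auxiliary function $g$ in the proposition.

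To recover \eqref{e:ev} I would identify $\lambda \tanh(\lambda h_0)$ with $s\,\mathcal{L}w(s)$. The Mittag--Leffler expansion $\tanh(\pi z/2) = (4z/\pi)\sum_{k\ge 0}((2k+1)^2 + z^2)^{-1}$ applied with $z = 2h_0 \lambda/\pi$ shows that the series defining $w$ has Laplace transform $\mathcal{L}w(s) = 2\tanh(\lambda h_0)/\lambda$, and hence $s\,\mathcal{L}w(s) = \lambda\tanh(\lambda h_0)$. Together with the convolution identity $\mathcal{L}(\partial_t^w v)(s) = \mathcal{L}w(s)(s\hat v - u_0(\cdot, 0))$, inverting the Laplace transform of the formula from the previous paragraph yields \eqref{e:ev} with $f = \partial_y g(x,0,t)$.

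The main technical obstacle is the careful bookkeeping of initial-data contributions. The generalized Caputo derivative $\partial_t^w v$ differs from the naive product $s\,\mathcal{L}w \cdot \hat v$ by the term $u_0(\cdot, 0)\,\mathcal{L}w(s)$, and this discrepancy has to be absorbed into the precise definition of $f$ (equivalently, into the boundary and initial conditions imposed on $g$). Once this is handled the remainder is routine Laplace calculus, with the core of the argument being the identification of $\mathcal{L}w(s)$ with the Dirichlet-to-Neumann symbol $2\tanh(\lambda h_0)/\lambda$ of the mixed one-dimensional problem.
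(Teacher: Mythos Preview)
Your proposal is correct and follows essentially the same route as the paper: derive $\partial_t v - \tfrac12\partial_x^2 v = \tfrac{\alpha}{2}\partial_y u|_{y=0}$ from \eqref{e:rho1}--\eqref{e:rho2}, then compute the Neumann trace via the Laplace-domain Dirichlet-to-Neumann map and identify its symbol with $s\,\mathcal Lw(s)$. The only organizational difference is that the paper subtracts $g$ \emph{first} (writing $u_1 = u - g$, which has $y$-independent initial data $u_0(x,0)$) so that the Laplace/separation-of-variables calculation yields $\partial_y u_1|_{y=0} = -\partial_t^w v$ cleanly, whereas you propose to compute with $u$ directly and absorb the residual initial-data contribution into $f$ afterward; the two are equivalent, and the upfront splitting is exactly what resolves the bookkeeping issue you flagged.
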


\begin{remark*}
  As we will see later, the Laplace transform of~$w$ is given by
  \begin{equation}\label{e:LW}
    \mathcal L w(s)
      = \int_0^\infty e^{-s t} w(t) \, dt
      = \frac{2\tanh \paren{ h_0 \sqrt{2 s} }}{\sqrt{2 s}} \,.
  \end{equation}
  For $h_0 = \infty$, 
  \begin{equation*}
    w(t) = \paren[\Big]{ \frac{2}{\pi t} }^{1/2}\,,
    \qquad\text{and}\qquad
    \mathcal Lw(s)
      = \paren[\Big]{\frac{2}{s}}^{1/2}\,.
  \end{equation*}
  In this case, $\partial_t^w$ is precisely $\sqrt{2} \partial_t^{1/2}$, the standard Caputo derivative of order $1/2$ (see for instance~\cite{Diethelm10}), and equation~\eqref{e:ev} becomes the Basset differential equation~\cite{Basset86}.
\end{remark*}

Finally we conclude this section with two remarks on generalizations of Theorem~\ref{t:zlimfat}.

\begin{remark}[Other scalings]
  The width of the spine and teeth may be scaled in different ways to obtain the same limiting process as in Theorem~\ref{t:zlimfat}.
  Explicitly, let
  \begin{equation*}\label{scale2}
    \tilde \Omega_\epsilon = \set{
      (x, y) \in \R^2 \; \st -w_S(\epsilon) < y < h_0 \one_{B(\epsilon \Z, w_T(\epsilon)/2)}(x)
    }\,,
  \end{equation*}
  where $w_S(\epsilon)$ and $w_T(\epsilon)$ denote the width of the spine and teeth respectively.
  We claim that Theorem~\ref{t:zlimfat} still holds (with the same limiting process), provided 
  \begin{equation}
    \lim_{\epsilon \to 0} \frac{w_T}{\epsilon w_S(\epsilon)} = \alpha \in (0, \infty)\,,
    \qquad\text{and}\qquad
    \lim_{\epsilon \to 0} w_S(\epsilon) = 0\,.  \label{genscale}
  \end{equation}
  The proof of Theorem~\ref{t:zlimfat} needs to be modified slightly to account for this more general statement.
  These modifications are described in Section~\ref{sec:otherscaling}, below.

  In the degenerate case when $\alpha = 0$, the process $Z^\epsilon$ rarely enters the teeth and the limiting behavior is simply that of a horizontal Brownian motion.
  On the other hand, if $\alpha = \infty$, then the process $Z^\epsilon$ enters the teeth too often, and the limiting behavior is simply that of a vertical, doubly reflected, Brownian motion.
\end{remark}

\begin{remark}[Higher dimensional models]
  Theorem \ref{t:zlimfat} can also be extended to analogous higher-dimensional models.
  For example, let $\Omega_\epsilon' \subseteq \R^3$ be a three dimensional ``brush'', defined by
  \[
    \Omega_\epsilon' \defeq \bigcup_{k \in \Z} (Q_k \cup T_k)\,.
  \]
  Here $Q_k$ and~$T_k$ are defined by
  \begin{align*}
    Q_k &\defeq \paren[\big]{\epsilon k-\frac{\epsilon}{2}, \epsilon k + \frac{\epsilon}{2}} \times \paren[\big]{-\frac{\epsilon}{2}, \frac{\epsilon}{2}} \times [-\epsilon,0), \\
    T_k &\defeq \set[\big]{ (x_1,x_2,x_3) \in \R^3 \st \paren{(x_1 - \epsilon k)^2 + x_2^2}^{1/2} \leq r  \epsilon^{3/2}, \quad x_3 \in  [0,h_0) }.
  \end{align*}
  In this case, the spine is the set $\cup_{k} \overline{Q_k}$, an infinite rectangular cylinder; the cylindrical teeth $T_k$ are spaced $O(\epsilon)$ apart and have radius $r\epsilon^{3/2} > 0$. If $Z^\epsilon$ is a Brownian motion in this domain with normal reflection at the boundary, then one obtains an analogous scaling limit as $\epsilon \to 0$.  The $O(\epsilon^{3/2})$ scaling of the radius of the teeth is chosen so that the ratio
  \[
    \frac{2 \text{Vol}(Q_k)}{\text{Area}( \overline{Q_k} \cap \overline{T_k} )} = \frac{2}{\pi r^2}
  \]
  is independent of $\epsilon$ -- this constant ratio plays the same role as the constant $\sfrac{2}{\alpha}$ in the comb-shaped domain~$\Omega_\epsilon$.
  While our proof of Theorem \ref{t:zlimfat} extends to this higher-dimensional version in a straight-forward way, the added modifications are technical.
  Thus, for simplicity and clarity of presentation, we only focus only on the comb-shaped domain as defined above for Theorem~\ref{t:zlimfat}.
\end{remark}

\subsection{Anomalous Diffusion in Comb-Shaped Graphs.}\label{s:ithincomb}

We now turn our attention to comb-shaped graphs, with the intention of studying a simpler version of the model in Section~\ref{s:ifatcomb} and of relating it to other work on trapped random walks.  Related random walk models on comb-shaped discrete graphs have been studied by several authors, including~\cites{BZ03,Ber06,CCFR09, CsakiCsorgoEA11}.  In each of these works, a limit process is obtained which involves a Brownian motion time-changed by the local time of an independent Brownian motion.  One difference between these other works and Theorem \ref{t:zlim} below is that the limiting processes in our result involves Brownian motion with sticky reflections, a consequence of the gluing condition described below.  More closely related to our model are the works~\cite{BenArousCerny07, BenArousCabezasEA15}, especially Section~3.2 of \cite{BenArousCabezasEA15}, where the trapping and drift of the random walk plays a role that is similar to our gluing condition.  In Section~\ref{s:tbm} below, we will use the framework in~\cite{BenArousCabezasEA15} for an alternate proof of our result in this simpler setting, illuminating the relationship between these models.  Nevertheless, the analyses in these other works do not apply to the comb-shaped domains considered in the previous Section~\ref{s:ifatcomb}, where the boundary local time of the diffusion process (pre-limit) plays an essential role.

We consider the infinite connected comb-shaped graph, $\mathcal C_\epsilon \subset \R^2$, be defined by
\begin{equation}\label{e:CepDef}
  \mathcal C_\epsilon = \paren[\big]{\R \times \set{0}} \cup \paren[\big]{ \epsilon \Z \times [0, h_0) }\,.
\end{equation}
We think of $\R \times \set{0}$ as the \emph{spine} of~$\mathcal C_\epsilon$, and $\epsilon \Z \times [0, h_0)$ as the infinite collection of teeth.
The teeth meet the spine at the junction points $J_\epsilon \subseteq \mathcal C_\epsilon$ defined by
\begin{equation}\label{e:JepDef}
  J_\epsilon \defeq (\epsilon \Z) \times \set{0}\,,
\end{equation}
and is depicted in Figure~\ref{f:thincomb}.
\begin{figure}[hbt]
  \begin{center}
    \begin{tikzpicture}
      \foreach \x [count=\n]in {-4,-3.5,...,4}{
	\node at (\x,0) [circle,fill=black,inner sep=0pt,minimum size=4pt] {};
	\draw [thick] (\x,0) -- ++(0,3);
      };
      \draw [thick] (-4.25,0) -- (4.25,0);
      \draw[<->] (-4.75,0) -- ++(0,3) node[midway,fill=white] {$h_0$};
      \draw[decorate,decoration={brace,mirror}] (0,-.2) -- ++(.5,0)
	node[pos=.5, anchor=north, yshift=-2pt] {$\epsilon$};

    \end{tikzpicture}
    \caption{Image of the comb-shaped graph~$\mathcal C_\epsilon$.
      The teeth are spaced $\epsilon$ apart and have height $h_0$.}\label{f:thincomb}
  \end{center}
\end{figure}
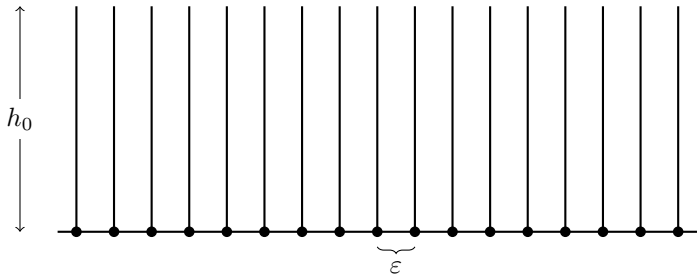

Let~$Z^\epsilon = (X^\epsilon, Y^\epsilon)$ be a diffusion on~$\mathcal C_\epsilon$ such that away from the junction points~$J_\epsilon$, the process~$Z^\epsilon$ is a standard Brownian motion.
If $h_0 < \infty$, we reflect~$Z^\epsilon$ at the ends of the teeth.
At the junction points, we specify a ``gluing condition'' that dictates~$Z^\epsilon$ enters the teeth with probability~$\alpha \epsilon / (2 + \alpha \epsilon)$, and stays in the spine with probability~$2 / (2 + \alpha \epsilon)$.
One can formulate this precisely by requiring the local time balance
\begin{equation*}
  L^{X^\epsilon}_t(J_\epsilon) = \frac{2}{2 + \alpha \epsilon} L^{Z^\epsilon}_t(J_\epsilon)\,,
  \qquad
  L^{Y^\epsilon}_t(J_\epsilon) = \frac{\epsilon}{2 + \alpha \epsilon} L^{Z^\epsilon}_t(J_\epsilon)\,,
\end{equation*}
at the junction points, and we describe this further in Section~\ref{s:thincomb}.
Alternately, one can make the gluing condition precise by using the excursion decomposition of~$Z^\epsilon$, and we do this in Section~\ref{s:excursion}.

Clearly the mechanics of the above diffusion on the comb-shaped graph~$\mathcal C_\epsilon$ shows that it is a simplified model of the diffusion on the comb-shaped domain~$\Omega_\epsilon$.
Our main result in this section shows convergence of~$Z^\epsilon$ to the same limit process as that in Theorem~\ref{t:zlimfat}.

\begin{theorem} \label{t:zlim}
Let $(\mu^\epsilon)$ be sequence of probability measures on~$\mathcal C_\epsilon$ which converge weakly to a probability measure $\mu$ on $\Omega_0 \defeq \R \times [0,h_0]$.
Let $Z^\epsilon$ be the above graph diffusion with initial distribution~$\mu^\epsilon$.
Then, as $\epsilon \to 0$, the processes~$Z^\epsilon$ converge weakly to the same limit process~$Z = (X, Y)$ defined in Theorem~\ref{t:zlimfat}.
\end{theorem}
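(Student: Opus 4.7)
The plan is to decouple the two coordinates: first identify the vertical component $Y^\epsilon$ in the limit as the sticky reflected Brownian motion $Y$ from Theorem~\ref{t:zlimfat}, and then express $X^\epsilon$ as a Brownian motion run on the random clock
\[
  A^\epsilon_t \defeq \int_0^t \one_{\set{Y^\epsilon_s = 0}} \, ds\,,
\]
which should converge to $\int_0^t \one_{\set{Y_s = 0}} \, ds = (2/\alpha) L^Y_t(0)$ by the defining identity for sticky reflection at $0$. I would set the argument up as tightness of $(Z^\epsilon)$ in $C([0,T];\Omega_0)$ together with identification of all subsequential limits, passing to a Skorokhod representation so that almost sure convergence is available.

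Tightness of $Y^\epsilon$ follows from Kolmogorov's criterion, since away from junctions $Y^\epsilon$ is a standard Brownian motion (reflected at $h_0$ when $h_0 < \infty$), and tightness of $X^\epsilon$ follows because any increment $X^\epsilon_t - X^\epsilon_s$ is a Brownian increment run for the random time $A^\epsilon_t - A^\epsilon_s \le t - s$. For a subsequential limit $Y$, It\^o's formula applied to $Y^\epsilon$ yields that, for $f \in C^2([0,h_0])$ with $f'(h_0) = 0$, the process
\[
  f(Y^\epsilon_t) - \frac{1}{2}\int_0^t f''(Y^\epsilon_s) \one_{\set{Y^\epsilon_s > 0}} \, ds - f'(0) L^{Y^\epsilon}_t(0)
\]
is a martingale. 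Combining the gluing relation $L^{Y^\epsilon}_t(J_\epsilon) = \tfrac{\epsilon}{2+\alpha\epsilon} L^{Z^\epsilon}_t(J_\epsilon)$ with scaling estimates for the local time of the spine Brownian motion at the lattice $\epsilon \Z$, I would show the boundary term converges to $(\alpha/2) f'(0) \int_0^t \one_{\set{Y_s = 0}} \, ds$, matching exactly the generator of sticky Brownian motion with stickiness parameter $1/\alpha$; uniqueness of the sticky martingale problem then identifies the limit. Once $Y^\epsilon \Rightarrow Y$, continuity of the occupation functional at $y=0$ along sticky trajectories gives $A^\epsilon_t \to (2/\alpha) L^Y_t(0)$. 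Defining $\bar W^\epsilon_s \defeq X^\epsilon_{\tau^\epsilon_s}$ via the right-continuous inverse $\tau^\epsilon$ of $A^\epsilon$, L\'evy's characterisation shows $\bar W^\epsilon$ is a standard Brownian motion, and the representation $X^\epsilon_t = \bar W^\epsilon_{A^\epsilon_t}$ passes to the limit to yield~\eqref{e:Xtc}.

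The main difficulty is carrying out the boundary computation rigorously while simultaneously showing that the limiting $\bar W$ is independent of $Y$. To bridge the per-junction-visit probabilistic gluing condition and the continuous local-time identity characterising stickiness, I would lean on the excursion decomposition of Section~\ref{s:excursion}: group together the tooth excursions that occur between two consecutive spine excursions into a geometric sum of i.i.d.\ tooth excursion durations, sum the contributions against the local time of the underlying spine Brownian motion at a fixed junction, and let $\epsilon \to 0$. This simultaneously produces the sticky parameter $1/\alpha$ and the joint independence in the limit, because in the pre-limit the tooth and spine excursions are driven by separate independent Brownian motions. The alternate proof via the Ben Arous \etal\ trapped Brownian motion framework, to be invoked in Section~\ref{s:tbm}, offers a parallel route that sidesteps the boundary martingale computation entirely by identifying $Z^\epsilon$ directly as a trapped random walk.
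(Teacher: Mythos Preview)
Your strategy differs from the paper's, and the two difficulties you flag are precisely where the proposal is incomplete.

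The paper does \emph{not} decouple the coordinates.  Instead it works directly with the two-dimensional generator $A$ of Lemma~\ref{l:Zgen}, whose domain encodes the coupling through the flux condition $\partial_x^2 f(x,0)+\alpha\partial_y f(x,0)=\partial_y^2 f(x,0)$.  The proof of Theorem~\ref{t:zlim} (Section~\ref{sec:thinconv}) mirrors that of Theorem~\ref{t:zlimfat}: tightness (Lemma~\ref{l:tightness}), a generator estimate
\[
\sup_{z\in K\cap\mathcal C_\epsilon}\E^{z}\paren[\Big]{f(Z^\epsilon_t)-f(Z^\epsilon_0)-\int_0^t Af(Z^\epsilon_s)\,ds}\to 0
\]
for $f\in\mathcal D(A)$ (Lemma~\ref{l:freidlin}), and uniqueness of the joint martingale problem (Lemma~\ref{l:MPuniqness}).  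The flux condition is exactly what makes the It\^o computation in Lemma~\ref{l:freidlin} close up: after using $\partial_x^2 f(x,0)=\partial_y^2 f(x,0)-\alpha\partial_y f(x,0)$, the residual reduces to an occupation-time-versus-local-time balance on the spine, estimated by an explicit test-function argument for a single Brownian motion.  Independence of $\bar W$ and $Y$ never has to be proved separately---it is built into the characterisation of $Z$ via the martingale problem.

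Your decoupled route has two genuine gaps.  First, the map $y\mapsto\int_0^t\one_{\set{y_s=0}}\,ds$ is not continuous on $C([0,T])$, so Skorokhod almost-sure convergence $Y^\epsilon\to Y$ does not by itself give $A^\epsilon_t\to(2/\alpha)L^Y_t(0)$; you would still need a quantitative local-time estimate of the sort the paper proves in Lemma~\ref{l:freidlin}, at which point you have essentially reproduced the joint generator computation.  Second, and more seriously, even granting $A^\epsilon\to(2/\alpha)L^Y(0)$ and that each $\bar W^\epsilon$ is a Brownian motion, passing to the limit only gives that the subsequential limit $\bar W$ is \emph{some} Brownian motion time-changed by $(2/\alpha)L^Y(0)$; it does not show $\bar W$ is independent of $Y$.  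The excursion/trapped-Brownian-motion machinery you invoke is what the paper develops in Section~\ref{s:excursion}, and the paper explicitly notes (Remark after Theorem~\ref{t:XYconv}) that this framework yields only \emph{marginal} convergence of $X^\epsilon$ and $Y^\epsilon$, not the joint convergence Theorem~\ref{t:zlim} asserts.  So leaning on Section~\ref{s:excursion} to close the independence gap does not work as stated.
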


The proof of Theorem~\ref{t:zlim} is technically and conceptually much simpler than that of Theorem~\ref{t:zlimfat}, and is presented in Section~\ref{s:thincomb}.
 Moreover, the excursion decomposition of~$Z^\epsilon$ on the comb-shaped graph~$\mathcal C_\epsilon$ allows for an elegant proof using time changes and the trapped Brownian motion framework in~\cites{BenArousCabezasEA15}.
We present this approach in Section~\ref{s:excursion}.

The process process~$Z^\epsilon$ on the comb-shaped graph~$\mathcal C_\epsilon$ is closely related to a model of fluid flow in fissured media, where trapping in microscopic regions of low permeability yields a macroscopic anomalous diffusive effect. Explicitly, consider medium composed of two materials: a set of \emph{blocks}, where the permeability is relatively low, and~\emph{fissures} where the permeability is relatively high (see for instance~\cites{ArbogastDouglasEA90,ShowalterWalkington91,BourgeatLuckhausEA96}).
Assuming that the region occupied by the fissures is connected and that the blocks are arranged periodically, the fluid flow in this situation is modeled by the equation
\begin{equation*}
  \partial_t u^\epsilon - \dv \paren[\big]{ a^\epsilon \grad u^\epsilon } = f\,,
  \qquad
  a^\epsilon(x)
    = \one_F\paren[\Big]{\frac{x}{\epsilon}} a\paren[\Big]{\frac{x}{\epsilon}} 
      + \epsilon^2 \one_B\paren[\Big]{\frac{x}{\epsilon}} A\paren[\Big]{\frac{x}{\epsilon}}\,.
\end{equation*}
Here $a, A$ are uniformly elliptic matrices representing the permeability in the fissures and blocks respectively, and $F, B$ denote the region occupied by the blocks and fissures respectively. For this linear model, Clark~\cite{Clark98} proved that as $\epsilon \to 0$, the functions $u^\epsilon$ two-scale converges to a function $U = U(x, y, t)$ that satisfies a coupled system, called the double-porosity model, in which the fluid in the fissures is driven in a non-local manner by the fluid in the blocks.

To understand this model probabilistically, one could study a diffusion~$\tilde Z^\epsilon$ whose generator is~$\dv a^\epsilon \grad$.
Inside the fissures, the process~$\tilde Z^\epsilon$ diffuses freely until it hits the boundary of a block.
Upon hitting a block boundary, the contrast between the block and fissure permeabilities dictates that~$\tilde Z^\epsilon$ enters the blocks with probability~$O(\epsilon)$, and remains in the fissures with probability $1 - O(\epsilon)$.
Since the blocks have diameter $O(\epsilon)$, and the permeability there is $O(\epsilon^2)$, the excursions of $\tilde Z^\epsilon$ into the blocks take~$O(1)$ amount of time. These characteristic features are exactly captured by the above comb model: the spine plays the role of the fissures and the teeth play the role of the blocks (rescaled to have size~$1$), and our gluing condition dictates that~$Z^\epsilon$ enters the teeth with probability~$O(\epsilon)$.

\subsection*{Plan of this paper}

The rest of the paper is organized as follows.
We begin by describing the limit process~$Z$, and study its basic properties in Section~\ref{s:limitprocess}.
Next, in Section~\ref{s:fatcomb} we prove Theorem~\ref{t:zlimfat} and all the required lemmas.
In Section~\ref{s:thincomb} we prove Theorem~\ref{t:zlim} on the comb-shaped graph~$\mathcal C_\epsilon$.
The proof is similar to that of Theorem~\ref{t:zlimfat}, but the technicalities are much simpler.
Finally, in Section~\ref{s:thincomb} we provide an alternate proof of Theorem~\ref{t:zlim} using the trapped Brownian motion framework in~\cite{BenArousCabezasEA15}.
\section{The Limit Process.}\label{s:limitprocess}

Before proving our main results in this paper, we give a more thorough description of the limit process $Z = (X,Y)$.
There are two canonical constructions of this process.
The first, relatively well-known construction involves directly writing $Y$ as a time-changed Brownian motion, and this is presented in Section~\ref{s:timechange}.
The second construction involves a characterization using the generator.
While the technicalities using this second approach are more involved, they relate to the PDE analogue and immediately yield Corollary~\ref{c:pde}.

\begin{remark}\label{r:h0eq1}
  The process~$Z$ depends on the parameters~$\alpha > 0$, and $h_0 \in (0, \infty]$.
  To simplify the presentation, we will subsequently assume~$h_0 = 1$.
  The case $h_0 = \infty$ may be handled by replacing the normal reflection at $1$ with a diffusion on the semi-infinite interval $(0, \infty)$.
\end{remark}


\subsection{Construction via Time Changes.}\label{s:timechange}

We begin by constructing the limit process~$Z$ using a time-changed Brownian motion.
To construct the process~$Y$, let $\bar{B_t}$ be a standard doubly reflected Brownian motion on the interval $(0, 1)$.
(Recall that in Remark~\ref{r:h0eq1} we assumed~$h_0 = 1$ for simplicity.)
Let $L^{\bar{B}}_s(0)$ be the local time of $\bar B$ at $0$, and define
\begin{equation*}
  \varphi(s) \defeq s + \frac{2}{\alpha}L^{\bar{B}}_s(0), \quad s \geq 0 \,.
\end{equation*}
Let $T$, defined by
\begin{equation}\label{e:Tdef}
  T(t) = T_t \defeq \varphi^{-1}(t) = \inf \set{ s \geq 0 \st \varphi(s) \geq t }\,,
\end{equation}
denote the inverse of $\varphi$.
Since $\varphi$ is strictly increasing, note that $T$ is continuous.
Thus the process~$Y$, defined by
\begin{subequations}
\begin{equation}\label{e:limitdef1}
  Y_t \defeq \bar B_{T_t}\,,
\end{equation}
is a continuous process on $[0, 1]$.
Clearly, on any interval of time where $Y$ remains inside the interval $(0, 1]$, trajectories of $Y$ and $\bar B$ are identical.
When $Y$ hits $0$, however, the trajectories are slowed down on account of the time change~$T$.
The behavior at~$0$ is known as a \emph{sticky reflection with parameter~$1/\alpha$} at~$0$, and we refer the reader to~\cite[14, \S5.7]{ItoMcKean74} or the original papers of Feller~\cites{Feller52,Feller54} for more details.

Clearly once the process~$Y$ is known, the process~$X$ can be recovered using~\eqref{e:Xtc}, reproduced here for convenience:
\begin{equation}\label{e:limitdef2}
  X_t \defeq \bar W_{ \frac{2}{\alpha}L^{Y}_t(0) }\,.
\end{equation}
\end{subequations}
Here~$\bar W$ is standard one dimensional Brownian motion that is independent of $\bar B$.
Intuitively, we think of $\R \times \{0\}$ as the spine of the limiting comb, and $\Rm \times (0,h_0]$ as the continuum of teeth.
The process $T_t$ may be interpreted as the time accumulated in the teeth, and $ \frac{2}{\alpha}L^Y_t(0)$ is the time accumulated in the spine. 

\subsection{The SDE Description.}
We now describe the process $Z = (X, Y)$ via a system of SDEs.
Let $W$ and $B$ be two independent standard one dimensional Brownian motions.
We claim that the process $Z$ can be characterized as the solution of the system of SDEs
\begin{subequations}
\begin{gather}
  \label{e:sdeX}
  dX_t = \one_{\set{Y_t = 0}} \, d W_t \,,\\
  \label{e:sdeY}
  dY_t = \one_{\set{Y_t \not= 0}} \, d B_t
  - dL^Y_t(1)
  + dL^Y_t(0)\,,\\
  \label{e:localtime}
  \alpha \one_{\set{Y_t = 0}} \, dt = 2\, dL^Y_t(0)\,,
\end{gather}
\end{subequations}
with initial distribution~$\mu$.
Existence of a process $Z$ satisfying~\eqref{e:sdeX}--\eqref{e:localtime} can be shown abstractly using the Hille-Yosida theorem, and we refer the reader to~\cite{Cohn18} for the details.
Instead, we will show existence by showing that the process~$Z$ constructed in the previous section is a solution to~\eqref{e:sdeX}--\eqref{e:localtime}.

\begin{lemma}\label{l:sdeZ}
  The process~$Z = (X, Y)$ defined by~\eqref{e:limitdef1}--\eqref{e:limitdef2} is a weak solution to the system~\eqref{e:sdeX}--\eqref{e:localtime}.
\end{lemma}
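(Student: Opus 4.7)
The strategy is to identify \eqref{e:sdeX}--\eqref{e:sdeY} as Dambis--Dubins--Schwarz representations of continuous local martingales produced by the time-change construction, with the sticky relation \eqref{e:localtime} emerging from the defining identity $\varphi(T_t)=t$ together with the Lipschitz regularity of $T$. A key preliminary is that, since $\langle Y\rangle_t = \langle\bar B\rangle_{T_t} = T_t$, the occupation-time formula applied on both sides of $Y_s = \bar B_{T_s}$ yields $L^Y_t(a) = L^{\bar B}_{T_t}(a)$ for every $a\in[0,1]$; in particular $L^Y_t(0) = L^{\bar B}_{T_t}(0)$.

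First I would apply Skorokhod's decomposition to the doubly reflected motion $\bar B$, writing $\bar B_s = \bar B_0 + \tilde B_s + L^{\bar B}_s(0) - L^{\bar B}_s(1)$ for a standard Brownian motion $\tilde B$ that may be taken independent of $\bar W$. Evaluating at $s = T_t$ and using the local-time identity above gives the Skorokhod-type representation
\[
Y_t = \bar B_0 + \tilde B_{T_t} + L^Y_t(0) - L^Y_t(1).
\]

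The crux of the argument, and the main obstacle, is the sticky condition \eqref{e:localtime}. Because $\varphi(r) \geq r$, the inverse $T$ is $1$-Lipschitz, hence absolutely continuous with density $T'_t \in [0,1]$ almost everywhere. From $\varphi(T_t)=t$ one reads off
\[
T_t + \frac{2}{\alpha} L^Y_t(0) = t.
\]
Since reflected Brownian motion spends zero Lebesgue time at the origin, the change of variables $r = T_s$, $dr = T'_s\, ds$, applied to $\int_0^{T_t}\one_{\{\bar B_r = 0\}}\,dr = 0$, yields
\[
\int_0^t \one_{\{Y_s = 0\}}\, T'_s\, ds = 0,
\]
so $T'_s = 0$ for Lebesgue-a.e.\ $s$ in $\{Y_s = 0\}$. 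Combining this with the displayed identity and the standard fact that the measure $dL^Y_s(0)$ is supported on $\{Y_s = 0\}$ forces simultaneously $dL^Y_s(0) = (\alpha/2)\one_{\{Y_s=0\}}\,ds$, which is \eqref{e:localtime}, and $T'_s = \one_{\{Y_s \neq 0\}}$ a.e.

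Finally I would apply DDS twice. The process $M_t \defeq \tilde B_{T_t}$ is a continuous local martingale with $\langle M\rangle_t = T_t = \int_0^t \one_{\{Y_s \neq 0\}}\,ds$, so $M_t = \int_0^t \one_{\{Y_s \neq 0\}}\,dB_s$ for a standard Brownian motion $B$ built from $\tilde B$ (on a suitable enlargement of the probability space if necessary); inserting this into the decomposition of $Y$ gives \eqref{e:sdeY}. Likewise, $X_t = \bar W_{(2/\alpha)L^Y_t(0)}$ is a continuous local martingale with $\langle X\rangle_t = (2/\alpha)L^Y_t(0) = \int_0^t \one_{\{Y_s = 0\}}\,ds$ by the sticky relation just established, so DDS applied to $\bar W$ yields $X_t = \int_0^t \one_{\{Y_s = 0\}}\,dW_s$ for some Brownian motion $W$; independence of $W$ and $B$ inherits from the independence of $\bar W$ and $\tilde B$. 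This gives \eqref{e:sdeX} and completes the verification.
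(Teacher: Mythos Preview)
Your proof is correct and follows the same overall strategy as the paper: apply the Tanaka/Skorokhod decomposition to $\bar B$, push local times through the time change $T$, establish the sticky relation, and identify the driving Brownian motion(s). Two technical choices differ. For \eqref{e:localtime}, the paper computes directly $\alpha\int_0^t\one_{\{Y_s=0\}}\,ds=\alpha\int_0^{T_t}\one_{\{\bar B_r=0\}}\,d\varphi(r)=2L^{\bar B}_{T_t}(0)$, using that the $dr$-part of $d\varphi$ contributes nothing on $\{\bar B=0\}$ while the $dL^{\bar B}$-part is supported there; your route via the Lipschitz regularity of $T$ and the identity $T_t+(2/\alpha)L^Y_t(0)=t$ is equally valid and perhaps more transparent. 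For the Brownian motions, the paper builds a \emph{single} $B_t\defeq\tilde B_{T_t}+\bar W_{(2/\alpha)L^Y_t(0)}$, checks $\langle B\rangle_t=T_t+(2/\alpha)L^Y_t(0)=t$, and invokes L\'evy's criterion once; both \eqref{e:sdeX} and \eqref{e:sdeY} are then read off from the vanishing of $\int_0^t\one_{\{Y_s=0\}}\,d\tilde B_{T_s}$ and $\int_0^t\one_{\{Y_s\neq 0\}}\,dX_s$. Your two separate DDS applications are fine, but the justification that independence of $W$ and $B$ ``inherits'' from that of $\bar W$ and $\tilde B$ is a bit glib: the DDS constructions also involve the clock $T$ (hence $\bar B$), so the clean argument is that $\langle W,B\rangle\equiv 0$ because the two clocks increase on disjoint time sets, and then a two-dimensional L\'evy/Knight criterion gives joint Brownianity.
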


The proof of Lemma~\ref{l:sdeZ} boils down to an SDE characterization of sticky Brownian motion that was recently shown by Engelbert and Peskir~\cite{EngelbertPeskir14}.
We remark that in~\cite{EngelbertPeskir14} the authors also show weak uniqueness of the appropriate SDE.
While we present the proof of existence below, we refer the reader to~\cite{EngelbertPeskir14} for the proof of uniqueness.
%

\begin{proof}
  By the Tanaka formula we have
  \begin{equation}\label{e:Tanaka1}
    \bar B_t = \tilde{B}_t + L_t^{\bar B}(0) - L_t^{\bar B}(1) \,,
  \end{equation}
  where $\tilde B$ is a Brownian motion. Since $T_t$ is a continuous and increasing time change, $\tilde B_{T_t}$ is still a continuous martingale, $L^Y_t(0) = L^{\bar B}_{T_t}(0)$ and $L^Y_t(1) = L^{\bar B}_{T_t}(1)$.
  Note first
  \begin{equation}
  \alpha\int_0^t \one_{\set{Y_s = 0}} \, ds = \alpha\int_0^t \one_{\set{\bar B_{T_s} = 0}} \, d\varphi(T_s) = \alpha\int_0^{T_t}\one_{\set{\bar B_s = 0}} \, d\varphi(s).
  \end{equation}
  Then since $\set{t \st \bar B_t = 0}$ has Lebesgue measure 0 and $L_t^{\bar B}$ only increases on this set, we decompose $\alpha\varphi(s) = \alpha s + 2L^{\bar B}_s$ to obtain
  \begin{equation}
  \alpha\int_0^{T_t}\one_{\set{\bar B_s = 0}} \, d\varphi(s) = 2\int_0^{T_t}\one_{\set{\bar B_s = 0}} dL^{\bar B}_s(0) = 2L^{\bar B}_{T_t}(0) = 2 L^Y_t(0) \,,
  \end{equation}
  which implies \eqref{e:localtime}. Notice that since $(2/\alpha)L^Y_t(0)$ is independent of $\bar W$, $X_t$ is a martingale with quadratic variation 
  \begin{equation}\label{e:qvX}
  \qv{X}_t = \frac{2}{\alpha}L_t^Y(0) \,. 
  \end{equation}
  In addition we have 
  \begin{equation*}
  \qv{\tilde B_T}_t = T_t\,.
  \end{equation*}
  Thus, for the process~$B$ defined by
  \begin{equation}\label{e:Bdef}
  B_t \defeq \tilde B_{T_t} + \bar W_{\frac{2}{\alpha}L^Y_t(0)}\,,
  \end{equation}
  we have $\qv{B}_t = t$.
  For the filtration, we let
  \begin{equation*}
    \mathcal G_t
      = \sigma\paren[\Big]{ \mathcal N \cup \mathcal F^{\bar B}_{T_t} \cup \mathcal F^{X}_t }
  \end{equation*}
  where $\mathcal N$ denotes the collection of all $\mathcal F^{(\bar B, \bar W)}_\infty$-null sets.  Since $\bar B$ and~$\bar W$ are independent, it is easy to see that for all $s \geq 0$, $X_t - X_s$ is independent of $\mathcal G_s$, and both $\tilde B_{T_t}$ and $X_t$ are $\mathcal G$-martingales.
  Thus, $B$ is also a $\mathcal G$-martingale, and by L\'evy's criterion must be a Brownian motion.

  Now~\eqref{e:sdeX}--\eqref{e:sdeY} follow from~\eqref{e:localtime}, \eqref{e:Bdef} and the fact that
  \begin{equation*}
  \int_0^t \one_{\set{Y_s = 0}} \, d\tilde B_{T_s} = 0 \quad \text{ and } \quad \int_0^t \one_{\set{Y_s \not= 0}} \, dX_s = 0\,.
  \qedhere
  \end{equation*}
\end{proof}

\subsection{Computing the Generator (Lemma~\ref{l:Zgen}).}\label{s:zgen}

We now compute the generator of~$Z$.
In the teeth (when $y > 0$) this is a standard calculation with It\^o's formula.
In the spine (when $y = 0$), however, one needs to estimate the time spent in the spine.
We state this precisely and carry out the details here.
\begin{lemma}\label{l:Zgen}
Let $\Omega_0 = \R \times [0, 1)$, and define the operator $A$ by
\begin{equation} \label{Adef}
  A \defeq \frac{1}{2} \partial_y^2\,.
\end{equation}
Define the domain of $A$, denoted by $\mathcal{D}(A)$, to be the set of all functions $g \in C_0(\Omega_0) \cap C^2_b(\Omega_0)$ such that
\begin{equation}\label{e:DAflux}
  \partial_y g(x,1) = 0 \,,
  \qquad\text{and}\qquad
  \partial_x^2 g(x,0) + \alpha \partial_y g(x,0)
    = \partial_y^2 g(x,0)\,.
\end{equation}
The generator of the process~$Z$ (defined by~\eqref{e:limitdef1}--\eqref{e:limitdef2}) is the operator~$A$ with domain $\mathcal D(A)$.
\end{lemma}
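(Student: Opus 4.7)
The plan is to apply Itô's formula to $g(Z_t)$ using the SDE description of $Z$ established in Lemma~\ref{l:sdeZ}, and then read off the infinitesimal generator by showing $\lim_{t \to 0} \frac{1}{t}(\E^z g(Z_t) - g(z)) = \tfrac{1}{2}\partial_y^2 g(z)$ uniformly in $z \in \Omega_0$. The boundary conditions in the definition of $\mathcal D(A)$ will appear precisely where they are needed to make the non-differential boundary contributions cancel or fold into $\tfrac{1}{2}\partial_y^2 g$.

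First, let $g \in \mathcal D(A)$ and apply the two dimensional Itô formula to $g(X_t, Y_t)$ using \eqref{e:sdeX}--\eqref{e:sdeY}. Since $dX_t = \one_{\{Y_t = 0\}} dW_t$ and the martingale part of $dY_t$ is $\one_{\{Y_t \ne 0\}} dB_t$, one checks $d\qv{X,Y}_t = 0$, $d\qv{Y}_t = \one_{\{Y_t \ne 0\}} dt$, and (by \eqref{e:localtime}) $d\qv{X}_t = \one_{\{Y_t = 0\}} dt = \frac{2}{\alpha} dL^Y_t(0)$. Collecting terms, we obtain
\begin{align*}
  g(Z_t) - g(Z_0)
    &= M_t
      + \int_0^t \bigl(\partial_y g + \tfrac{1}{\alpha}\partial_x^2 g\bigr)(Z_s) \, dL^Y_s(0) \\
    &\quad - \int_0^t \partial_y g(Z_s) \, dL^Y_s(1)
      + \int_0^t \tfrac{1}{2}\partial_y^2 g(Z_s)\, \one_{\{Y_s \ne 0\}}\, ds,
\end{align*}
where $M$ is a local martingale with $M_0 = 0$. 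The integrand in the $L^Y(1)$ term vanishes by the Neumann condition $\partial_y g(x,1) = 0$, while the boundary condition $\partial_x^2 g(x,0) + \alpha \partial_y g(x,0) = \partial_y^2 g(x,0)$ rearranges to $\partial_y g(x,0) + \tfrac{1}{\alpha}\partial_x^2 g(x,0) = \tfrac{1}{\alpha}\partial_y^2 g(x,0)$.

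Substituting this identity and then using \eqref{e:localtime} once more in the form $\frac{2}{\alpha} dL^Y_s(0) = \one_{\{Y_s = 0\}} ds$, the local-time-at-$0$ contribution becomes
\begin{equation*}
  \int_0^t \tfrac{1}{\alpha}\partial_y^2 g(Z_s)\, dL^Y_s(0)
    = \int_0^t \tfrac{1}{2}\partial_y^2 g(Z_s)\, \one_{\{Y_s = 0\}}\, ds,
\end{equation*}
which combines with the $\{Y_s \ne 0\}$ term to yield $\int_0^t \tfrac{1}{2}\partial_y^2 g(Z_s)\, ds$. Since $g \in C^2_b$ and $X$, $Y$ are bounded on bounded time intervals only in the $Y$ direction, one should verify that $M$ is a true martingale (standard localization plus the bound $\abs{\partial_x g}, \abs{\partial_y g} \le \Vert g \Vert_{C^1_b}$ suffices to bound the quadratic variations on any finite interval), so that taking $\E^z$ gives
\begin{equation*}
  \E^z g(Z_t) - g(z) = \E^z \int_0^t \tfrac{1}{2}\partial_y^2 g(Z_s)\, ds.
\end{equation*}

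Dividing by $t$ and sending $t \to 0$, continuity of $\partial_y^2 g$ and of $s \mapsto Z_s$ together with dominated convergence (the integrand is bounded by $\tfrac12 \Vert \partial_y^2 g \Vert_\infty$) give $\tfrac{1}{t}(\E^z g(Z_t) - g(z)) \to \tfrac{1}{2}\partial_y^2 g(z)$ uniformly in $z$. Combined with the Feller property of $Z$ (which also follows from the time-change construction in Section~\ref{s:timechange}), this uniform convergence shows that $g$ lies in the domain of the infinitesimal generator and that the generator acts as $Ag$. The reverse containment — that every function in the domain of the generator satisfies the boundary conditions of $\mathcal D(A)$ — would follow from the Hille--Yosida route referenced after Lemma~\ref{l:sdeZ} via weak uniqueness of the martingale problem; the main obstacle is really the book-keeping of the two boundary terms, and the key algebraic observation is that the sticky condition \eqref{e:DAflux} is exactly what is needed for the $dL^Y_s(0)$ integrand to collapse into $\tfrac{1}{\alpha} \partial_y^2 g$.
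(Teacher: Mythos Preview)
Your proof is correct and takes a cleaner route than the paper's. Both start from It\^o's formula, but you immediately substitute the boundary conditions \eqref{e:DAflux} into the finite-variation terms, which (together with two applications of the sticky relation \eqref{e:localtime}) collapses everything to Dynkin's formula $\E^z g(Z_t) - g(z) = \E^z\int_0^t Ag(Z_s)\, ds$; the generator limit is then immediate from continuity and boundedness of $\partial_y^2 g$. The paper instead keeps the $dY_s$ and $dL^Y_s(0)$ contributions separate in~\eqref{e:tmpg2} and proceeds by case analysis on $y \in (0,1)$, $y=1$, $y=0$. The $y=0$ case then requires the nontrivial occupation-time estimate $\E^0[T_t/t] \to 0$ (equation~\eqref{e:T_estimate}) together with $\E^0[L^Y_t(0)/t] \to \alpha/2$ and $\E^0[Y_t/t] \to \alpha/2$, all of which you bypass entirely. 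What the paper's longer argument buys is a partial converse: at $y=1$ it shows explicitly that the limit blows up unless $\partial_y g(x,1)=0$, making the necessity of that boundary condition transparent, whereas your argument (as you acknowledge) only establishes the inclusion $\mathcal D(A) \subseteq \text{dom}(\text{generator})$.
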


\begin{proof}
  Choose $g \in \mathcal{D}(A)$ and apply It\^o's formula to obtain
\begin{align*}g(X_t,Y_t) = g(X_0,Y_0) &+ \int_0^t \p_xg(X_s,Y_s)dX_s + \int_0^t \p_y g(X_s,Y_s) \, dY_s\\
&+  \frac{1}{\alpha}\int_0^t \p^2_x g(X_s,Y_s) \, dL_s^Y(0) + \frac{1}{2}\int_0^t \p_y^2 g(X_s, Y_s) \, dT_s \,.
\end{align*}
Taking expectations gives
\begin{multline}\label{e:tmpg2}
  \Em^{(x,y)}\brak[\Big]{g(X_t,Y_t) - g(x,y)} = \Em^{(x,y)}\brak[\Big]{\int_0^t\p_yg(X_s,Y_s) \, dY_s}\\
 +\Em^{(x,y)}\brak[\Big]{ \frac{1}{\alpha}\int_0^t\p^2_xg(X_s,Y_s) \, dL_s^Y(0) + \frac{1}{2}\int_0^t \p_y^2 g(X_s,Y_s) \, dT_s}
 \,.
\end{multline}

Now for $y \in (0, 1)$ we know  $Y$ is a Brownian motion before it first hits $0$ or $1$, and hence $\lim_{t\to 0}\Pm^y(L_t^Y(0) \neq 0 ) = 0$.
Moreover by definition of $T$, we know $T_t = t$ when $\{L^Y_t = 0\}$.
Consequently
\begin{equation*}
\lim_{t\to 0}\Em^{(x,y)} \brak[\Big]{\frac{g(X_t,Y_t) - g(x,y)}{t}} = \frac{1}{2}\p_y^2g(x,y) \,.
\end{equation*}

For $y = 1$ we note
\begin{multline}\label{e:tmpg1}
\lim_{t\to 0}\Em^{(x,1)} \brak[\Big]{\frac{g(X_t,Y_t) - g(x,y)}{t}}
\\
  = \frac{1}{2}\p_y^2g(x,1) +  \lim_{t\to 0}\Em^{(x,1)}\brak[\Big]{\frac{1}{t}\int_0^t \p_y g(X_s,Y_s) \, dY_s } \,.
\end{multline}
By~\eqref{e:Tanaka1} we know $\E^{(x,1)} L_t^{Y}(1) = O(\sqrt{t})$, and hence the right hand side of~\eqref{e:tmpg1} is finite if and only if $\partial_y g(x,1) = 0$.

Finally, we compute the generator on the spine $y = 0$.
First we show that if we start $Y$ at $0$ then for a short time it spends ``most'' of the time at 0. More precisely we claim
\begin{equation}
	\lim_{t\to 0}\Em^0\brak[\Big]{\frac{T_t}{t}} = 0\label{e:T_estimate} \, .
\end{equation}
Here we clarify that the~$0$ superscript on~$\E$ refers to the initial distribution of the process~$\bar B$, where as the double superscript~$\E^{(x,y)}$, or measure superscript~$\E^\mu$ used earlier refers to the initial distribution of the joint process~$Z = (X, Y)$.

Let $M_t$ be the running maximum of $\tilde{B}$. Note that since $L^{\bar B} = L^{\tilde{B}}$ on $\{M_t < 1\}$, we have
 \begin{multline*}
	\Pm^0\paren[\Big]{L^{\bar B}_t(0) \leq r} \leq \Pm^0\paren[\Big]{L^{\tilde B}_t(0) \leq r} + \Pm^0\paren[\Big]{M_t > 1}\\
	= 1 - 2 \Pm^0\paren[\Big]{r < \tilde B_t < 1} \leq \sqrt{\frac{2}{\pi}}\paren[\Big]{\frac{r}{\sqrt{t}} +\sqrt{t}e^{-\frac{1}{2t}}} \,.
\end{multline*}
Thus,
\begin{align*}
  \Em^0\brak[\Big]{\frac{T_t}{t}}
    &= \int_0^1 \Pm^0 \paren[\Big]{T_t > st} \, ds
    = \int_0^1 \Pm^0 \paren[\Big]{st + 2L^{\bar B}_{st}(0) \leq t} \, ds
  \\
  &= \int_0^1 \Pm^0 \paren[\Big]{L^{\bar B}_{st}(0) \leq \frac{(1-s)t}{2} } \, ds
    \leq \int_0^1\sqrt{\frac{2}{\pi}}\paren[\Big]{\frac{2(1-s)}{\sqrt{s}}\sqrt{t} + \sqrt{st} \, e^{-\sfrac{1}{2st}}}\, ds
  \\
  &\leq C\sqrt{t} \,.
\end{align*}

With this estimate, we can now compute generator on the spine.
Using equation \eqref{e:T_estimate} we see
\begin{equation}\label{e:tmpLtYbyt}
	\Em^{0}\left[\frac{L^Y_t(0)}{t}\right] = \Em^{0}\left[\frac{L^{\bar B}_{T_t}(0)}{t}\right] = \frac{\alpha}{2}\Em^{0}\left[\frac{t - T_t}{t}\right]\xrightarrow{t\rightarrow 0}\frac{\alpha}{2} \,.
\end{equation} 
Using \eqref{e:Tanaka1} we have,
\begin{equation*}
	\Em^{0}\left[\frac{Y_t}{t}\right] = \Em^{0}\left[\frac{\bar B_{T_t}}{t}\right] = \Em^{0}\left[\frac{\tilde{B}_{T_t} + L^{\bar B}_{T_t}(0) - L^{\bar B}_{T_t}(1) }{t}\right] \,.
\end{equation*}
Since $T_t \leq t$, the third term tends to 0 and using the modulus of continuity for Brownian motion the first term does as well.
Therefore we also have 
\begin{equation}\label{e:tmpYtByt}
	\Em^{0}\left[\frac{Y_t}{t}\right] \xrightarrow{t\rightarrow 0} \frac{\alpha}{2} \, .
\end{equation}
Thus using~\eqref{e:T_estimate}, \eqref{e:tmpLtYbyt} and~\eqref{e:tmpYtByt} in equation~\eqref{e:tmpg2} gives
\begin{equation*}
  \lim_{t\to 0} \frac{1}{t} \Em^{(x,y)}\brak[\Big]{g(X_t,Y_t) - g(x,y)}
    = \frac{\alpha}{2} \partial_y g(x, 0) + \frac{1}{2} \partial_x^2g(x, 0) + 0
 \,,
\end{equation*}
finishing the proof.
\end{proof}

\subsection{PDE Homogenization (Corollaries~\ref{c:var}, \ref{c:pde}, and Proposition~\ref{p:ftime}).}

Once the generator of~$Z$ is known, the behavior of the variance (Corollary~\ref{c:var}) and PDE homogenization result (Corollary~\ref{c:pde}) can be deduced quickly.

\begin{proof}[Proof of Corollary~\ref{c:var}]
  We first assume~$h_0 = 1$ as in Remark~\ref{r:h0eq1}.
  Using Theorem~\ref{t:zlimfat} and~\eqref{e:qvX} we see
  \begin{equation}\label{e:varX}
    \lim_{\epsilon \to 0} 
      \E^{(x,0)} \abs{X^\epsilon_t - x}^2
    = \E^{(x,0)} \abs{X_t - x}^2
      = \frac{2}{\alpha} \E^{0} L^Y_t(0)\,.
  \end{equation}
  Now equation~\eqref{e:varShort} follows from~\eqref{e:tmpLtYbyt}.

  For the long time limit (when~$h_0 = 1$) we note that by ergodicity of~$\bar B$, we know that $\E^0 \abs{ L_t^{\bar B} / t - 1/2} \to 0$ as $t \to \infty$.
  Thus using~\eqref{e:Tdef} we must have
  \begin{equation*}
    \lim_{t \to \infty} \E^0 \abs[\Big]{\frac{T(t)}{t} - \frac{\alpha}{\alpha + 1} } = 0 \,.
  \end{equation*}
  Consequently,
  \begin{equation*}
    \Em^{0}\paren[\Big]{\frac{L^Y_t(0)}{t}}
      = \Em^{0}\paren[\Big]{\frac{L^{\bar B}_{T_t}}{t}}
      = \frac{\alpha}{2}\Em^{0}\paren[\Big]{\frac{t - T_t}{t}}
      \xrightarrow{t\rightarrow \infty}\frac{\alpha}{2(\alpha + 1)} \,,
  \end{equation*}
  and together with~\eqref{e:varX} this implies~\eqref{e:varLong}.
  This finishes the proof of~\eqref{e:varShort} and~\eqref{e:varLong} when $h_0 = 1$.
  The case for arbitrary finite $h_0$ is similar.

  When $h_0 = \infty$, the process $Y$ is a sticky Brownian motion on the half line, and the distribution of $L^Y_t(0)$ can be computed explicitly.
  Namely (see for instance~\cite{Howitt07}) we have
  \begin{align}
      \label{e:sbmOT}
      \frac{2}{\alpha} L^Y_t(0)
      = \int_0^t \one_{\set{Y_s = 0}} \, ds
      &\sim \frac{2\abs{N}}{\alpha} \paren[\Big]{ t + \frac{N^2}{\alpha^2}}^{1/2} - \frac{2 N^2}{\alpha^2}\,,
  \end{align}
  where~$N$ is the standard normal.
  Taking expectations and using~\eqref{e:varX} immediately yields~\eqref{e:varShort} and~\eqref{e:varLongInf}, finishing the proof.
\end{proof}

\begin{proof}[Proof of Corollary~\ref{c:pde}]
  By the Kolmogorov backward equation~\cite[\S5.6]{Friedman75} we known that the function~$u^\epsilon$ (defined by~\eqref{e:heat1}--\eqref{e:heat2}) satisfies
  \begin{equation*}
    u^\epsilon(z, t) = \E^z u_0(Z^\epsilon_t)\,.
  \end{equation*}
  Consequently
  \begin{equation*}
    \int_{\Omega_\epsilon} u^\epsilon(z, t) \, d\mu^\epsilon(z)
      = \E^{\mu^\epsilon} u_0( Z^\epsilon_t )
      \xrightarrow{\epsilon \to 0} 
      = \E^{\mu} u_0( Z_t )\,,
  \end{equation*}
  by Theorem~\ref{t:zlimfat}.
  Thus, if we set
  \begin{equation}\label{e:udef}
    u(z, t) = \E^z u_0(Z_t)\,,
  \end{equation}
  we see that~\eqref{e:uepConv} holds.

  It only remains to verify that~$u$ satisfies~\eqref{e:rho1}--\eqref{e:rho4} hold.
  To see this, recall that the function~$u$ defined by~\eqref{e:udef} belongs to $C(0, \infty; \mathcal D(A))$ and satisfies the Kolmogorov equations
  \begin{alignat*}{2}
    \span
      \partial_t u - A u = 0
      &\qquad& t > 0\,,
    \\
    \span
      u(\cdot, t) = u_0
      && \text{when } t = 0\,.
  \end{alignat*}
  The first equation above implies~\eqref{e:rho1} by definition of~$A$ (equation~\eqref{Adef}).
  Equations~\eqref{e:rho2} and~\eqref{e:rho3} follow from the fact that $u(\cdot, t) \in \mathcal D(A)$ for all $t > 0$, and equation~\eqref{e:rho4} follows from the second equation above.
\end{proof}

We now obtain evolution equations for the slice of $u$ at $y = 0$, as stated in Proposition~\ref{p:ftime}.
\begin{proof}[Proof of Proposition~\ref{p:ftime}]
  Let $u_1 = u - g$, and observe that $u_1$ satisfies~\eqref{e:rho1} with initial data $u_1(x, y, 0) = u_0(x, 0) = v_0(x)$, and boundary conditions
  \begin{equation}\label{e:u1bc}
    u_1(x, 0, t) = u(x, 0, t) = v(x, t)
    \qquad\text{and}\qquad
    \partial_y u_1(x, 1, t) = 0\,.
  \end{equation}
  (Recall that in Remark~\ref{r:h0eq1} we have already set $h_0 = 1$ for simplicity.)
  We now treat~$x$ as a parameter, and solve~\eqref{e:rho1} using separation of variables (in $y$, $t$) with boundary conditions~\eqref{e:u1bc}.
  A direct calculation shows
  \begin{equation}\label{e:pyu1}
    \partial_y u_1(x, 0, t) = - \partial_t^w v\,,
  \end{equation}
  and hence
  \begin{equation}\label{e:pyu}
    \partial_y u(x, 0, t) = -\partial_t^w v(x, t) + \partial_y g(x, 0, t)\,.
  \end{equation}

  Now for $t > 0$ using equation~\eqref{e:rho1} and~\eqref{e:rho2} and continuity of second derivatives of $u$ up to $y = 0$ we see
  \begin{equation}\label{e:ev2}
    \partial_t v(x, t)
      = \frac{\alpha}{2} \partial_y u(x, 0, t) + \frac{1}{2} \partial_x^2 v(x, t)\,.
  \end{equation}
  Using~\eqref{e:pyu} and~\eqref{e:ev2} yields~\eqref{e:ev} as claimed.
\end{proof}

\begin{remark}
  For brevity, we have suppressed the explicit separation of variables calculation deriving~\eqref{e:pyu1}.
  One can avoid this calculation by using the Laplace transform as follows.
  Following standard convention, we will denote the Laplace transform of a function using an upper case letter using the variable $s$, instead of $t$.
  Explicitly, given a function $f$, we define its Laplace transform, denoted by $F$ or $\mathcal Lf$, by
  \begin{equation*}
    F(s) \defeq \mathcal Lf(s) = \int_0^\infty e^{-s t} f(t) \, dt\,.
  \end{equation*}
  For functions that depend on both space and time variables, the Laplace transform will only be with respect to the time variable.

  Taking the Laplace transform of~$u_1$ yields the ODE in the variable $y$
  \begin{equation*}
    s U_1 - v_0 - \frac{1}{2} \partial_y^2 U_1 = 0\,,
  \end{equation*}
  with boundary conditions $U_1(x, 0, s) = V(x, s)$, and $\partial_y U_1(x, 1, s) = 0$.
  Solving this ODE yields
  \begin{equation*}
    U_1(x, y, s)
      = \frac{v_0}{s} +
	\paren[\Big]{\frac{1}{1 + e^{2 \sqrt{2s}} }}
	\paren[\Big]{V - \frac{v_0}{s}}
	\brak[\Big]{ e^{y \sqrt{2s} } + e^{\sqrt{2s}(2 - y)} }\,,
  \end{equation*}
  and hence
  \begin{equation*}
    \partial_y U_1 (x, 0, s )
      = -\sqrt{2s}
        \paren[\Big]{V - \frac{v_0}{s}}
        \tanh \sqrt{2s}
      = -\frac{2 \tanh \sqrt{2s} }{\sqrt{2s}} \paren[\Big]{ s V - v_0} \,.
  \end{equation*}
  Choosing~$w$ to be a function with Laplace transform~\eqref{e:LW}, implies~\eqref{e:pyu1} as claimed.
\end{remark}

\section{Comb-Shaped Domains (Theorem \ref{t:zlimfat}).}\label{s:fatcomb}

We now turn to the proof of Theorem~\ref{t:zlimfat}. Recall that $Z_t^{\epsilon,+} = \pi_\epsilon(Z_t^\epsilon) = (X_t^\epsilon, \max (Y_t^\epsilon,0))$. The main ingredients in the proof are the following lemmas.

\begin{lemma}\label{l:FCtightness}
  Let $Z^\epsilon = (X^\epsilon, Y^\epsilon)$ be the reflected Brownian motion on the comb-shaped domain~$\Omega_\epsilon$, as described in Theorem~\ref{t:zlimfat}.
  Then, for any $T > 0$, the family of processes $Z^\epsilon$ is tight in $C([0,T]; \R^2)$. 
\end{lemma}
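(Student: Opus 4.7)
The plan is to apply Kolmogorov's tightness criterion. Tightness of the initial distributions is immediate, since $\pi_\epsilon^*(\mu^\epsilon)$ converges weakly and $|Z^\epsilon_0 - \pi_\epsilon(Z^\epsilon_0)| \leq \epsilon$, so it suffices to establish a uniform-in-$\epsilon$ fourth-moment estimate $\E |Z^\epsilon_t - Z^\epsilon_s|^4 \leq C (t-s)^2$ for $0 \leq s \leq t \leq T$.

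Because $\partial \Omega_\epsilon$ is axis aligned, the normal-reflection Skorohod decomposition of $Z^\epsilon$ decouples into coordinates:
\begin{equation*}
X^\epsilon_t = X^\epsilon_0 + W^{(1)}_t + U^\epsilon_t,
\qquad
Y^\epsilon_t = Y^\epsilon_0 + W^{(2)}_t + V^\epsilon_t,
\end{equation*}
where $(W^{(1)}, W^{(2)})$ is a standard planar Brownian motion, $V^\epsilon$ is the finite-variation reflection that accumulates only on the horizontal portions of $\partial \Omega_\epsilon$ (heights $-\epsilon$, $0$, $h_0$), and $U^\epsilon$ accumulates only on the vertical walls of the teeth. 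The $Y^\epsilon$-coordinate is handled easily: it is confined to the bounded interval $[-\epsilon, h_0]$, and a Skorohod lemma for multiply reflected Brownian motion on this interval bounds $|V^\epsilon_t - V^\epsilon_s|$ by a constant multiple of the oscillation of $W^{(2)}$ over $[s,t]$, yielding $\E |Y^\epsilon_t - Y^\epsilon_s|^4 \leq C (t-s)^2$ uniformly in $\epsilon$.

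The $X^\epsilon$-coordinate requires more care, since $X^\epsilon$ is unbounded and $U^\epsilon$ may a priori have very large total variation. The crucial structural observation is that $U^\epsilon$ only accumulates on the vertical tooth walls, so in particular only while $Z^\epsilon$ is inside a tooth, during which $X^\epsilon$ is confined to an interval of width $\alpha \epsilon^2$. If $\{[\tau_j, \tau_j']\}_j$ enumerates the wall-hitting excursions into teeth that intersect $[s,t]$, each contributes
\begin{equation*}
|U^\epsilon_{\tau_j'} - U^\epsilon_{\tau_j}|
    = |X^\epsilon_{\tau_j'} - X^\epsilon_{\tau_j} - (W^{(1)}_{\tau_j'} - W^{(1)}_{\tau_j})|
    \leq \alpha \epsilon^2 + |W^{(1)}_{\tau_j'} - W^{(1)}_{\tau_j}|.
\end{equation*}
Summing, and using the optional-sampling identity $\E[(W^{(1)}_{\tau_j'} - W^{(1)}_{\tau_j})^2 \mid \mathcal F_{\tau_j}] = \E[\tau_j' - \tau_j \mid \mathcal F_{\tau_j}]$ along with the trivial bound $\sum_j (\tau_j' - \tau_j) \leq t - s$, the whole estimate reduces to controlling moments of the number $N(s, t)$ of wall-hitting tooth excursions.

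The main obstacle is thus the uniform bound $\E [N(s, t)^p] \leq C_p (t-s)^p$. This is an excursion-theoretic estimate: a wall-hitting excursion requires the $x$-coordinate to traverse a distance of order $\alpha \epsilon^2$ inside a tooth, whose rate I expect can be controlled by a local-time analysis at the spine--tooth interface, using that the openings occupy a density $\alpha \epsilon$ along $y = 0$ and that the doubly reflected spine motion in $y$ has $O(1/\epsilon)$ local time at $y = 0^-$ per unit time, so that wall-hitting excursions accumulate at an $O(1)$ rate in $\epsilon$. Once this counting estimate is established, a routine application of the BDG inequality, Cauchy--Schwarz, and Kolmogorov--Chentsov yields the desired tightness in $C([0, T]; \R^2)$.
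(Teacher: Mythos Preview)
Your treatment of $Y^\epsilon$ is essentially fine and close in spirit to the paper's (though note that $Y^\epsilon$ is not literally a doubly reflected Brownian motion on $[-\epsilon,h_0]$, since reflection at $y=0$ only occurs away from the gate openings; the paper handles this with a direct path-splitting argument rather than invoking a Skorohod lemma). The real gap is in the $X^\epsilon$ coordinate. You have correctly identified that everything reduces to the counting estimate $\E[N(s,t)^p]\le C_p(t-s)^p$, but you have not proved it, and it is precisely the hard part. Your rate heuristic (interface local time $\sim 1/\epsilon$, opening density $\sim\alpha\epsilon$) at best suggests that wall-hitting tooth visits accrue at an $O(1)$ rate, which for a Poisson-like count gives $\E[N^p]\asymp (t-s)$ for small $t-s$, not $(t-s)^p$; the exponents in your Kolmogorov--Chentsov step would then have to be re-examined. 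Moreover, ``wall-hitting excursion'' is not made precise: excursions of $Y^\epsilon$ above zero occur at infinite rate, and isolating those that reach a vertical wall requires a genuine excursion-theoretic argument that you have only sketched.

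The paper sidesteps all of this with a direct coupling (its Lemma~\ref{lem:BrownianComp}). Instead of controlling the reflection term $U^\epsilon$, it records the successive times $\tau_k$ at which $X^\epsilon$ hits the midpoints $\epsilon(\Z+\tfrac12)$ between teeth. By symmetry of the domain, $k\mapsto X^\epsilon(\tau_k)$ is a simple random walk with step $\pm\epsilon$, and the time for $X^\epsilon$ to first move $\epsilon/4$ from a midpoint---which occurs entirely in the spine, with no boundary interaction---gives an i.i.d.\ lower bound on the gaps $\tau_{k+1}-\tau_k$. Coupling these increments in law to the $\epsilon/4$-level-crossing times of an auxiliary one-dimensional Brownian motion $W'$ yields
\[
\P^z\Bigl(\sup_{\substack{r,t\in[0,T]\\|t-r|\le\delta}}|X^\epsilon_t-X^\epsilon_r|\ge a\Bigr)
\;\le\;
\P\Bigl(\sup_{\substack{r,t\in[0,T]\\|t-r|\le\delta}}4|W'_t-W'_r|\ge a-2\epsilon\Bigr),
\]
which delivers tightness of $X^\epsilon$ directly from that of a single Brownian motion, with no moment computations, no local-time balance, and no excursion counting.
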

\begin{lemma}\label{l:MPuniqness}
  Let $A$ be the generator defined in~\eqref{Adef}, with domain $\mathcal D(A)$. Weak uniqueness holds for the martingale problem for $A$.
\end{lemma}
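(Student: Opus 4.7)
The plan is to leverage the construction in Section~\ref{s:limitprocess}: since the operator $(A,\mathcal{D}(A))$ should be a Feller generator on $C_0(\Omega_0)$, the classical theory of the martingale problem guarantees that it has a unique solution in law for each initial distribution~\cite{EthierKurtz86}.

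To verify the Feller generator property, I would invoke the Hille--Yosida theorem. Dissipativity follows from the maximum principle. On $\{y\in(0,1)\}$ one has the usual $\partial_y^2 g\le 0$ at an interior maximum; at $y=1$ the Neumann condition forces $\partial_y^2 g\le 0$ again; at $y=0$ the maximum forces $\partial_y g(x^*,0)\le 0$ and $\partial_x^2 g(x^*,0)\le 0$, and the boundary relation $\partial_y^2 g=\partial_x^2 g+\alpha\partial_y g$ then gives $Ag\le 0$. The range condition amounts to solving the resolvent equation $\lambda u - \tfrac12\partial_y^2 u = f$ on $\R\times[0,1]$ with the Neumann condition at $y=1$ and the coupled Wentzell-type condition at $y=0$. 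I would represent $u(x,y)$ as a convolution in $y$ of its trace $v(x)=u(x,0)$ against the one-dimensional resolvent kernel on $[0,1]$ (with Dirichlet datum at $0$ and Neumann datum at $1$); the boundary condition at $y=0$ then reduces to a scalar equation of the form $\lambda v-\tfrac12\partial_x^2 v+\mathcal{K}_\lambda v=F$ on $\R$ with $\mathcal{K}_\lambda$ a bounded positive operator, uniquely solvable in $C_0(\R)$ by a contraction-mapping argument. These details are carried out in~\cite{Cohn18}.

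If one prefers to avoid this analytic route, a more direct probabilistic argument is available. Restricting the martingale problem to test functions $g(x,y)=\phi(y)$ with $\phi\in C^2([0,1])$, $\phi'(1)=0$, $\phi''(0)=\alpha\phi'(0)$ shows that the $y$-marginal $Y$ is a sticky Brownian motion on $[0,1]$, whose law is uniquely determined by the SDE uniqueness result of Engelbert--Peskir~\cite{EngelbertPeskir14}. To recover $X$ I would use auxiliary test functions of the form $g(x,y)=\phi(x)+\alpha^{-1}\phi''(x)H(y)$ for $\phi\in C_c^\infty(\R)$ and a fixed corrector $H\in C^2([0,1])$ with $H(0)=0$, $H''(0)=\alpha$, $H'(1)=0$; such $g$ lie in $\mathcal{D}(A)$ and satisfy $Ag(x,0)=\tfrac12\phi''(x)$. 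Combining these with the $\phi^2$-variants and the sticky identity $\int_0^t\one_{\{Y_s=0\}}\,ds=(2/\alpha)L_t^Y(0)$, one identifies $X$ as a continuous local martingale with $\langle X\rangle_t=(2/\alpha)L_t^Y(0)$; product test functions $g(x,y)=\phi(x)\psi(y)$ then give $\langle X,M^Y\rangle=0$, and Dambis--Dubins--Schwarz reconstructs $X$ as a Brownian motion independent of $Y$, time-changed by $(2/\alpha)L_t^Y(0)$.

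The hard part, in either route, is the coupling between the $x$ and $y$ variables in the boundary condition at $y=0$. In the Hille--Yosida route it appears as the nonlocal operator $\mathcal{K}_\lambda$ that must be controlled uniformly in $\lambda$. In the direct probabilistic route it rules out nontrivial test functions depending only on $x$ (the boundary relation forces any such function to be affine), which forces the use of the corrector $H$ together with a limiting argument that collapses the interior contribution $\tfrac{1}{2\alpha}\phi''(x)H''(y)$ onto the spine $\{y=0\}$ in the quadratic variation computation.
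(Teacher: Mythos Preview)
Your two proposed routes are both essentially correct, and your first route is closely related to what the paper does, though the execution differs.

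The paper does not verify the Hille--Yosida hypotheses for the resolvent. Instead it reduces uniqueness for the martingale problem to \emph{existence of classical solutions to the Cauchy problem} $\partial_t u = Au$ with $u(\cdot,t)\in\mathcal D(A)$ (Lemma~\ref{l:PDEexistence}); given such a $u$, the process $u(Z_t,T-t)$ is a martingale for any solution $Z$ of the martingale problem, and this forces one-dimensional (hence all) marginals to agree. To prove Lemma~\ref{l:PDEexistence} the paper exploits the structure already developed in Proposition~\ref{p:ftime}: the boundary trace $v(x,t)=u(x,0,t)$ satisfies the Basset-type time-fractional equation~\eqref{e:ev}, for which existence is taken from~\cite{Chen17} together with a Duhamel principle~\cites{Umarov12,UmarovSaydamatov06}; $u$ is then reconstructed from $v$ via the heat kernel on $(0,1)$. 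So both the paper and your first route reduce to a scalar equation for the trace on $\R$; the difference is parabolic (Cauchy problem, time-fractional operator $\partial_t^w$) versus elliptic (resolvent, Dirichlet-to-Neumann multiplier $\mathcal K_\lambda$). The paper's choice has the advantage of reusing Proposition~\ref{p:ftime}, which is of independent interest.

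Your second, probabilistic route is genuinely different from the paper and is a nice alternative. Two small points: for $g(x,y)=\phi(x)+\alpha^{-1}\phi''(x)H(y)$ to lie in $\mathcal D(A)$ you also need $H'(0)=0$ (otherwise the boundary identity at $y=0$ fails); a choice such as $H(y)=\alpha(y^2/2-y^3/3)$ works. And for the cross-variation step, product test functions $\phi(x)\psi(y)$ are only in $\mathcal D(A)$ when $\psi(0)=0$ and $\psi''(0)=\alpha\psi'(0)$, so the independence argument requires a little more care than you indicate.
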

\begin{lemma}\label{l:FCgenerator}
If $f \in \mathcal D(A)$, and $K \subset \Omega_0$ is compact, then
  \begin{equation}\label{e:fconv}
\lim_{\epsilon \to 0} \sup_{z \in K \cap \Omega_\epsilon}    \E^z\paren[\Big]{
      f(Z^{\epsilon,+}_t)
      - f(Z^{\epsilon,+}_0)
      - \int_0^t Af(Z^{\epsilon,+}_s) \, ds
    } = 0\,.
  \end{equation}%
\end{lemma}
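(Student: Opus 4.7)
The plan is to apply an It\^o--Tanaka formula to the continuous function $F(x,y) \defeq f(x,y^+)$ at the reflected Brownian motion $Z^\epsilon$, observing that $F(Z^\epsilon_t) = f(Z^{\epsilon,+}_t)$, and then verify that the resulting pathwise drift matches $\int_0^t A f(Z^{\epsilon,+}_s) \, ds$ in expectation, uniformly over $z \in K \cap \Omega_\epsilon$, as $\epsilon \to 0$.  The function $F$ is smooth in $x$ and piecewise $C^2$ in $y$ with a jump of magnitude $\partial_y f(x, 0)$ in $\partial_y F$ across $\set{y = 0}$; mollifying $F$ in a $\delta$-tube about $\set{y=0}$, applying the classical It\^o formula for $Z^\epsilon$, and passing $\delta \to 0$ yields the decomposition
\begin{align*}
  f(Z^{\epsilon,+}_t) - f(Z^{\epsilon,+}_0)
    = {}& M^\epsilon_t
    + \tfrac{1}{2} \int_0^t \one_{\set{Y^\epsilon_s > 0}} (\partial_x^2 + \partial_y^2) f(Z^\epsilon_s) \, ds \\
    &+ \tfrac{1}{2} \int_0^t \one_{\set{Y^\epsilon_s \le 0}} \partial_x^2 f(X^\epsilon_s, 0) \, ds \\
    &+ \tfrac{1}{2} \int_0^t \partial_y f(X^\epsilon_s, 0) \brak[\big]{d L^{Y^\epsilon}_s(0) - d \Lambda^\epsilon_s} + B^\epsilon_t,
\end{align*}
in which $M^\epsilon$ is a local martingale, $L^{Y^\epsilon}(0)$ is the semimartingale local time of $Y^\epsilon$ at $0$, $\Lambda^\epsilon$ is the boundary local time on the portion of $\set{y = 0}$ lying in $\partial \Omega_\epsilon$ (i.e.\ the tops of the spine between adjacent teeth), and $B^\epsilon_t$ collects the remaining normal-reflection terms.

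The pieces of $B^\epsilon_t$ on $\set{y=h_0}$ and $\set{y=-\epsilon}$ vanish by $\partial_y f(\cdot, h_0) = 0$ from $\mathcal D(A)$ and by the $y$-constancy of $F$ in the spine, leaving only a vertical-wall contribution $B^{\epsilon, V}_t$.  Subtracting $\int_0^t A f(Z^{\epsilon,+}_s) \, ds = \tfrac{1}{2} \int_0^t \partial_y^2 f(X^\epsilon_s, (Y^\epsilon_s)^+) \, ds$ and invoking the compatibility condition $\partial_y^2 f(x, 0) = \partial_x^2 f(x, 0) + \alpha \partial_y f(x, 0)$ from $\mathcal D(A)$, the deficit collapses to
\[
  R^\epsilon_t = M^\epsilon_t
    + \tfrac{1}{2} \int_0^t \partial_y f(X^\epsilon_s, 0) \brak[\big]{dL^{Y^\epsilon}_s(0) - d\Lambda^\epsilon_s - \alpha \one_{\set{Y^\epsilon_s \le 0}} ds}
    + \tfrac{1}{2} \int_0^t \one_{\set{Y^\epsilon_s > 0}} \partial_x^2 f(Z^\epsilon_s) \, ds
    + B^{\epsilon, V}_t.
\]

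Since $\E^z M^\epsilon_t = 0$, showing $\sup_{z \in K \cap \Omega_\epsilon} \abs{\E^z R^\epsilon_t} \to 0$ reduces to two estimates.  The crucial one is the pre-limit sticky balance: for every bounded smooth $g$ on $\R$,
\[
  \sup_{z \in K \cap \Omega_\epsilon} \abs[\Big]{\E^z \int_0^t g(X^\epsilon_s) \brak[\big]{dL^{Y^\epsilon}_s(0) - d\Lambda^\epsilon_s - \alpha \one_{\set{Y^\epsilon_s \le 0}} ds}} \xrightarrow{\epsilon \to 0} 0,
\]
applied here with $g = \partial_y f(\cdot, 0)$.  This is the pre-limit analogue of the sticky identity $2 \, dL^Y_t(0) = \alpha \one_{\set{Y_t = 0}} \, dt$ satisfied by the limit process: on a single periodicity cell both $L^{Y^\epsilon}(0)$ and $\Lambda^\epsilon$ individually accumulate at rate $O(1/\epsilon)$, yet their difference balances, to leading order, the tooth-entry flux $\alpha \one_{\set{Y^\epsilon_s \le 0}} \, ds$.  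Controlling this cancellation against the $x$-dependent test function requires effective horizontal averaging of $X^\epsilon$ on the spine time scale $\epsilon^2$, and this is exactly where the oscillation estimate of Proposition~\ref{p:uosc1} (for a Neumann problem on a periodicity cell) enters.  The remaining tooth-bulk $\partial_x^2 f$ term combines with $B^{\epsilon, V}_t$: integration by parts in $x$ across each tooth converts $\tfrac{1}{2} \int \partial_x^2 f \, dx$ into the difference of $\partial_x f$ at the two vertical walls, which together with the smoothness of $f$ in $x$ and the $O(\alpha \epsilon^2)$ tooth width produces an $O(\epsilon)$ total contribution.  The main obstacle is thus the sticky local-time balance: the two $O(1/\epsilon)$ local-time rates $L^{Y^\epsilon}(0)$ and $\Lambda^\epsilon$ cancel only at a precise geometric rate, and controlling this cancellation against the non-constant test function $\partial_y f(X^\epsilon, 0)$ is what necessitates the probabilistic oscillation estimate of Proposition~\ref{p:uosc1}.
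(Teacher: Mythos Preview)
Your approach coincides with the paper's: extend $f$ by $f^\epsilon(x,y)=f(x,y^+)$, apply It\^o's formula, use the compatibility condition $\partial_x^2 f(x,0)+\alpha\partial_y f(x,0)=\partial_y^2 f(x,0)$ to reduce the defect to two local-time balances, and invoke the oscillation estimate (Proposition~\ref{p:uosc1}) for the crucial one. The two substantive differences are in bookkeeping and in how seriously you take the second balance.

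First, the paper applies It\^o's formula directly to $f^\epsilon$ as a function on $\Omega_\epsilon$: since $\partial_y f^\epsilon(x,0^-)=0$, the Neumann boundary term on $\{y=0\}\cap\partial\Omega_\epsilon$ vanishes identically, and the only level-$0$ contribution is the jump term $\int_0^t\partial_y f(X^\epsilon_s,0^+)\,dL^G_s$ at the \emph{interior} gate $G$. Your route through the one-dimensional semimartingale local time $L^{Y^\epsilon}(0)$ and the boundary push $\Lambda^\epsilon$ is equivalent---one has the exact identity $L^{Y^\epsilon}(0)-\Lambda^\epsilon=2L^G$ (with the symmetric convention)---but this means the ``$O(1/\epsilon)$ cancellation'' you describe is not an estimate at all; it is an algebraic identity. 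The genuine analytical step is the balance $2\,dL^G_s \approx \alpha\,\one_{\{Y^\epsilon_s<0\}}\,ds$ against a varying test function (the paper's Lemma~\ref{l:FCLocalTimeG}), and that is where Proposition~\ref{p:uosc1} enters.

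Second, your one-sentence dismissal of the tooth-wall term (``integration by parts\dots $O(\epsilon)$ total'') is too quick. Each of $L^+$ and $L^-$ accumulates at rate $O(\epsilon^{-2})$ in a tooth of width $\alpha\epsilon^2$, so the naive product of ``$\partial_x f$ varies by $O(\epsilon^2)$'' with the wall local time is only $O(1)$, matching---not dominating---the bulk $\tfrac12\int\partial_x^2 f\,\one_{\{Y^\epsilon>0\}}\,ds$. Getting the combination down to $o(1)$ (in fact $O(\epsilon^2)$ per tooth, hence $O(\epsilon)$ after summing over $O(1/\epsilon)$ teeth) is the paper's Lemma~\ref{l:FCLocalTimeT}; its proof builds an auxiliary quadratic corrector $\tilde f$, handles the near-gate region separately via a narrow-escape / geometric-series argument, and is not a trivial integration by parts.
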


 Momentarily postponing the proof of these lemmas, we prove Theorem~\ref{t:zlimfat}.
\begin{proof}[Proof of Theorem~\ref{t:zlimfat}]
  Suppose first $Z^{\epsilon,+} \to Z'$ weakly along some subsequence.
  We claim $Z'$ should be a solution of the martingale problem for $A$ with initial distribution~$\mu$.
  To see this set
  \begin{equation*}
    M^\epsilon_t = f( Z^{\epsilon,+}_t) - f( Z^{\epsilon,+}_0 )
      - \int_0^t A f( Z^{\epsilon,+}_r ) \, dr
  \end{equation*}
  and observe
  \begin{equation*}
    \E^{\mu^\epsilon} \paren[\big]{
      M^\epsilon_t
      \given \mathcal F_s
    }
    = M^\epsilon_s
      + \E^{Z^\epsilon_s} \paren{M^\epsilon_{t-s}}\,,
  \end{equation*}
  by the Markov property.
  Using Lemma~\ref{l:FCgenerator}, and taking limits along this subsequence, the last term on the right vanishes.
  Since this holds for all $f \in \mathcal D(A)$ and $\mathcal D(A)$ is dense in $C_0(\Omega_0)$, $Z'$ must be a solution of the martingale problem for~$A$.
  Since $Z^{\epsilon,+} \to Z'$ weakly and $\pi_\epsilon^*(\mu^\epsilon) \to \mu$ weakly by assumption, we have $Z(0) \sim \mu$.
  By uniqueness of solutions to the martingale problem for~$A$ (Lemma~\ref{l:MPuniqness}), the above argument shows uniqueness of subsequential limits of~$Z^{\epsilon,+}$.
  Combined with tightness (Lemma~\ref{l:FCtightness}), and the fact that $Z$ is a solution to the martingale problem for~$A$ (Lemma~\ref{l:Zgen}), this gives weak convergence as desired.
\end{proof}

It remains to prove Lemmas~\ref{l:FCtightness}--\ref{l:FCgenerator}.
We do this in Sections~\ref{s:FCtightness}, \ref{s:MPuniqness} and~\ref{s:FCgenerator}, below.

\subsection{Proof of Tightness (Lemma~\ref{l:FCtightness}).}\label{s:FCtightness}

To prove tightness, we need an auxiliary lemma comparing the oscillation of trajectories in the spine to that of Brownian motion.
This will also be used in the proof of Lemma~\ref{l:FCgenerator}.
\begin{lemma}\label{lem:BrownianComp}
Let $W'$ be a standard Brownian motion on $\R$ with $W'(0) = 0$.  For any $T > 0$, $\epsilon \in (0,1/2]$, $z \in \Omega_\epsilon$, and any $a, \delta > 0$, we have
\begin{equation} \label{XWcomp}
\P^z \paren[\Big]{ \sup_{ \substack{r,t \in [0,T] \\ |t - r| \leq \delta}} |X^\epsilon(t) - X^\epsilon(r)|  \geq a } \leq \P \paren[\Big]{ \sup_{ \substack{r,t \in [0,T] \\ |t - r| \leq \delta}}4 |W'(t) - W'(r)|  \geq a - 2\epsilon }\,.
\end{equation}
\end{lemma}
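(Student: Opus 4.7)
My plan is to decompose $X^\epsilon$ via the Skorokhod decomposition of the normally reflected Brownian motion $Z^\epsilon$, writing
\[
  X^\epsilon_t = X^\epsilon_0 + B^x_t + \xi_t,
\]
where $B^x$ is a standard one-dimensional Brownian motion (the horizontal component of the driving 2D Brownian motion) and $\xi$ is a continuous process of bounded variation supported on the vertical walls of the teeth. Consequently $\xi$ is constant on every spine excursion, and during every tooth excursion $X^\epsilon$ remains confined to a horizontal strip of width $\alpha\epsilon^2$. I would compare $X^\epsilon$ to a Brownian motion in two stages: first handling the spine component via a time change, and then controlling the residual tooth drift using the geometric structure of the comb.

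For the spine part, define the frozen-in-teeth continuous martingale
\[
  \tilde X^\epsilon_t \defeq X^\epsilon_0 + \int_0^t \one_{\set{Y^\epsilon_s \leq 0}} \, dB^x_s,
\]
with quadratic variation $\sigma(t) \defeq \int_0^t \one_{\set{Y^\epsilon_s \leq 0}} \, ds \leq t$. By the Dubins--Schwarz theorem there is a standard Brownian motion $\hat W$ such that $\tilde X^\epsilon_t = X^\epsilon_0 + \hat W(\sigma(t))$. Since $\sigma$ is $1$-Lipschitz and $\sigma(T) \leq T$, the oscillation of $\tilde X^\epsilon$ over intervals of length at most $\delta$ in $[0,T]$ is dominated pathwise by the oscillation of $\hat W$ over such intervals in $[0,T]$, and the latter has the same distribution as the corresponding oscillation of $W'$.

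For the residual $D \defeq X^\epsilon - \tilde X^\epsilon$, its increment across any single completed tooth excursion is a net horizontal drift $\Delta_j$ bounded by $\alpha\epsilon^2$. The key geometric fact is that consecutive tooth centers are spaced $\epsilon$ apart, so if the process visits $N$ distinct teeth during $[r,t]$ then the spine motion (equivalently, $\hat W$ restricted to the corresponding spine-time window) must oscillate by at least $N(\epsilon - \alpha\epsilon^2)$. Amortizing the cumulative tooth drift against $\hat W$'s oscillation in this way yields a pathwise bound of the form
\[
  \sup_{|t-r| \leq \delta} |X^\epsilon_t - X^\epsilon_r|
  \;\leq\; (1 + C\alpha\epsilon) \sup_{\substack{u, v \in [0,T] \\ |u-v|\leq\delta}} |\hat W_u - \hat W_v| + 2\alpha\epsilon^2
\]
for an absolute constant $C$. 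Under $\epsilon \leq 1/2$ (with $\alpha$ fixed) the multiplicative prefactor absorbs into the stated constant $4$ and the additive term absorbs into $2\epsilon$; transferring from $\hat W$ to $W'$ using the distributional equality of their moduli of continuity on $[0,T]$ then gives the claimed stochastic dominance.

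The main obstacle is the third step, the amortization of the cumulative tooth drift: each tooth contribution is individually small, but many tooth excursions can occur in a short time, and repeated visits to the same tooth do not cancel automatically. The geometric separation of the teeth (spacing $\epsilon$, width only $\alpha\epsilon^2$) is what makes the amortization possible, and repeated visits to the same tooth must be handled separately using the fact that the spine motion between consecutive re-entries into the same tooth is constrained to the narrow tooth opening of width $\alpha\epsilon^2$.
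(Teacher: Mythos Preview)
Your approach is quite different from the paper's and has a real gap at the amortization step. The paper never decomposes $X^\epsilon$ into a martingale part plus drift; instead it introduces stopping times $\tau_k$ at which $X^\epsilon$ hits successive midpoints in $\epsilon(\Z+\tfrac12)$, so that by the reflection symmetry of $\Omega_\epsilon$ the sequence $k\mapsto X^\epsilon(\tau_k)$ is a simple symmetric random walk with step $\pm\epsilon$. It then introduces sub-stopping-times $\tau_k'$ at which $X^\epsilon$ first exits the $\epsilon/4$-ball about the midpoint. Since an $\epsilon/4$-neighbourhood of a midpoint lies entirely in the spine, on each $[\tau_k,\tau_k']$ the process $X^\epsilon$ is a pure one-dimensional Brownian motion, and the family $\{\tau_k'-\tau_k,\ \Delta_kX^\epsilon\}_k$ has exactly the joint law of $\{\sigma_{k+1}-\sigma_k,\ 4\Delta_kW'\}_k$ for the analogous $\epsilon/4$-stopping-times of $W'$. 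The factor $4$ and the additive $2\epsilon$ both fall out of this coupling with no further estimates; there is no drift term to amortize.

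The gap in your argument is the tacit identification of ``spine motion'' with $\hat W$. On any single spine interval they agree, but across different spine intervals they differ by the accumulated drift $D$, which is precisely the quantity you are trying to control. Concretely: let the process enter tooth $0$ at the left gate edge, reflect to the right edge, exit, make a short spine excursion back to the left gate edge, and repeat $n$ times. Then $D$ increases by roughly $n\alpha\epsilon^2$ while $\tilde X^\epsilon$ (hence $\hat W$) decreases monotonically by the same amount, and $X^\epsilon$ stays within $\alpha\epsilon^2$ of the origin throughout. So $|D_t-D_r|$ is \emph{not} bounded by $N\alpha\epsilon^2$ with $N$ the number of distinct teeth ($N=1$ here), and your claim that visiting $N$ distinct teeth forces $\hat W$ to oscillate by $N(\epsilon-\alpha\epsilon^2)$ is circular: passing from tooth $k_r$ to tooth $k_t$ forces $X^\epsilon$ (not $\tilde X^\epsilon=\hat W\circ\sigma$) to range over roughly $|k_t-k_r|\epsilon$, and converting this into an oscillation bound on $\hat W$ already requires control of $D$. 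Because $D$ and $\tilde X^\epsilon$ are strongly anticorrelated (indeed $D_s\approx k(s)\epsilon-\tilde X^\epsilon_s$ whenever $Y^\epsilon_s>0$), the triangle inequality $|X^\epsilon_t-X^\epsilon_r|\le|\tilde X^\epsilon_t-\tilde X^\epsilon_r|+|D_t-D_r|$ is far too crude, and no argument exploiting this anticorrelation is supplied. The paper's midpoint coupling sidesteps all of this.
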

\begin{proof}
 Let
\begin{equation*}
\tau_0 = \inf \set[\big]{ t \geq 0 \st X^\epsilon(t) \in \epsilon \paren[\big]{\Z + \frac{1}{2}}}\,,
\end{equation*}
and inductively define
\begin{equation*}
\tau_{k+1} = \inf \set[\big]{ t \geq \tau_k \st \abs{X^\epsilon(t) - X^\epsilon(\tau_k)} =  \epsilon }\,,
\end{equation*}
for $k \geq 0$.
By symmetry of the domain, observe that $k \mapsto X^\epsilon(\tau_k)$ defines a simple random walk on the discrete points $\epsilon(\Z + \sfrac{1}{2})$.   Next, define
\[
\tau_k' = \inf \set { t \geq \tau_k \st |X^\epsilon(t) - X^\epsilon(\tau_k)| =  \epsilon/4 }, \quad k \geq 0.
\]
In particular, $\tau_k < \tau_k' < \tau_{k+1}$. At time $\tau_k$, $X^\epsilon(\tau_k)$ is in the spine, at the midpoint between two adjacent teeth. For $t \in [\tau_k, \tau_k']$, $X^\epsilon(t)$ is in the spine and cannot enter the teeth, because $|X^\epsilon(t) - x| \leq \epsilon/4$ where $x = X^\epsilon(\tau_k) \in \epsilon ( \Z + \frac{1}{2})$.
Define the increments $\Delta_k X^\epsilon = X^\epsilon(\tau_{k+1}) - X^\epsilon(\tau_k) \in \{ - \epsilon, + \epsilon \}$.
By the strong Markov property and symmetry of the domain, the random variables $\{ (\tau_k' - \tau_k) \}_k \cup \{  \Delta X^\epsilon_k \}_k$ are independent. 

Now, suppose that $W'(t)$ is an independent Brownian motion on $\R$, with $W'(0) = 0$. Define another set of stopping times inductively by $\sigma_0 = 0$ and
\begin{align*}
\sigma_{k+1} & = \inf \set { t \geq \sigma_k \st |W'(t) - W'(\sigma_k)| =  \epsilon/4 }, \quad k \geq 0.
\end{align*}
Let $\Delta \sigma_k = \sigma_{k+1} - \sigma_k$, and $\Delta_k W' = W'(\sigma_{k+1}) - W'(\sigma_k) \in \{ -\epsilon/4, \epsilon/4\}$.  Observe that the family of random variables
\[
\{(\sigma_{k+1} - \sigma_k) ,  4 \Delta W'_k \}_{k \geq 0}
\]
has the same law as the family 
\[
\{ (\tau_k' - \tau_k) ,  \Delta X^\epsilon_k \}_{ k \geq 0}.
\]

Next, define
\[
K(t) = \max \{ k \geq 0 \;|\; \tau_k  \leq t\},
\]
and observe that if $|t - r| \leq \delta$ and $0 \leq r \leq t \leq T$, then we must have $\tau_{K(t)} - \tau_{K(r) + 1} \leq \delta$ and thus
\[
\sum_{j = K(r) + 1}^{K(t) - 1} (\tau_j' - \tau_j) \leq \delta, \quad \quad \text{and} \quad \quad  \sum_{j = 0}^{K(t) - 1} (\tau_j' - \tau_j) \leq T.
\]
In this case,
\begin{align*}
  \MoveEqLeft[4] \abs{X^\epsilon(t) - X^\epsilon(r)}  \leq 2 \epsilon + \abs{X^\epsilon(K(t)) - X^\epsilon(K(r)+ 1)} \nonumber \\
& = 2 \epsilon + \abs[\Big]{ \sum_{j= K(r) + 1}^{K(t) - 1} \Delta X^\epsilon_j}  \\
& \leq 2 \epsilon + \sup_{0 \leq \ell \leq m} \abs[\Big]{ \sum_{j= \ell+1}^{m- 1} \Delta X^\epsilon_j} \one_{\set[\big]{  \sum_{j = \ell+1}^{m-1} (\tau_j' - \tau_j) \leq \delta }} \one_{\set[\big]{ \sum_{j = 0}^{m-1} (\tau_j' - \tau_j) \leq T }} \,.
\end{align*}
This last supremum has the same law as
\begin{multline*}
\sup_{0 \leq \ell \leq m} \abs[\Big]{ \sum_{j= \ell+1}^{m-1} 4 \Delta W'_j}
  \one_{\set[\big]{  \sum_{j = \ell+1}^{m-1} (\sigma_{j+1} - \sigma_j) \leq \delta }} \one_{\set[\big]{\sum_{j = 0}^{m-1} (\sigma_{j+1} - \sigma_j) \leq T }}
  \\
  = \sup_{0 \leq \ell \leq m} 4 \abs{ W'(\sigma_m) - W'(\sigma_{\ell+1})} \, \one_{\set{ \sigma_{m} - \sigma_{\ell+1} \leq \delta }} \, \one_{ \set{ \sigma_m - \sigma_0  \leq T }} \,.
\end{multline*}
Since the right hand side of the above is bounded by
\begin{equation*}
  \sup_{ \substack{r,t \in [0,T] \\ \abs{t - r} \leq \delta}}4 \abs{W'(t) - W'(r)} \,,
\end{equation*}
we obtain \eqref{XWcomp}.
\end{proof}

We now prove Lemma~\ref{l:FCtightness}.
\begin{proof}[Proof of Lemma~\ref{l:FCtightness}]
  Note first that Lemma~\ref{lem:BrownianComp} immediately implies that the processes~$X^\epsilon$ are tight.
  Indeed, by~\eqref{XWcomp} we see
  \begin{equation}
    \lim_{\delta \to 0} \limsup_{\epsilon \to 0} \P^{\mu^\epsilon} \paren[\Big]{ \sup_{ \substack{r,t \in [0,T] \\ |t - r| \leq \delta}} |X^\epsilon(t) - X^\epsilon(r)|  \geq a }  = 0 \label{tightX1}\,.
  \end{equation}
  Moreover, since $\mu^\epsilon$ converge weakly to the probability measure~$\mu$, the distributions of $X^\epsilon_0$ are tight.
  This implies implies tightness of the processes $X^\epsilon$.

  For tightness of~$Y^\epsilon$, we note as above that the distributions of $Y^\epsilon_0$ are already tight.
  In order to control the time oscillations, fix $T > 0$, and let
  \begin{equation*}
    d Z^\epsilon = d B_t + d L^{\partial \Omega_\epsilon}_t\,,
  \end{equation*}
  be the semi-martingale decomposition of~$Z^\epsilon$ (see for instance~\cite{StroockVaradhan71}).
  Here $B = (B_1, B_2)$ is a standard Brownian motion and $L^{\partial \Omega_\epsilon}$ is the local time of $Z^\epsilon$ on $\partial \Omega_\epsilon$.
  Let $\omega(\delta) = \omega_T(\delta)$, defined by 
  \begin{equation*}
    \omega(\delta) = \sup_{ \substack{s,t \in [0,T] \\ |t - s| \leq \delta}} |B_2(t) - B_2(s)| \,,
  \end{equation*}
  be the modulus of continuity for $B_2$ over $[0, T]$.
  Let $[s,t] \subset [0,T]$ with $|t - s| \leq \delta$.
  If  $0 < Y^\epsilon_r < 1$ for all $r \in (s,t)$, then we must have
  \[
    |Y^\epsilon(t) - Y^\epsilon(s)| = |B_2(t) - B_2(s)| \leq \omega(\delta) \,.
  \]
  Otherwise, for some $r \in (s,t)$ either $Y_r = 0$ or $Y_r = 1$.
  Let $G_\delta$ be the event that $\omega(\delta) < 1/2$; on this event $Y$ cannot hit both $0$ and $1$ on the interval $[s,t]$.
  Define
\begin{equation*}
  \eta_- = \inf \set{ r > s \st Y^\epsilon_r \in \{0,1\} }\,,
  \quad\text{and}\quad
  \eta_+ = \sup \set{ r < t \st Y^\epsilon_r \in \{0,1\} }\,. 
\end{equation*}
In this case we have
\begin{align*}
|Y^\epsilon_t - Y^\epsilon_s| & \leq \max \paren{ |Y^\epsilon(\eta_-) - Y^\epsilon(s)|\;, \; |Y^\epsilon(t) - Y^\epsilon(\eta_+)| } + \one_{G_\delta^c}  + \epsilon^2 \\
& = \max \paren{ |B({\eta_-}) - B(s)|\;, \; |B(t) - B(\eta_+)| } + \one_{G_\delta^c} + \epsilon^2 \leq \omega(\delta) +  \one_{G_\delta^c} + \epsilon^2.
\end{align*}
Combining the two cases, we see that for any $z \in \Omega_\epsilon$,
\begin{equation*}
\P^z \paren[\Big]{\sup_{ \substack{s,t \in [0,T] \\ |t - s| \leq \delta}} |Y^\epsilon(t) - Y^\epsilon(s)| > a } \leq \P(\omega(\delta) > a-\epsilon^2) + \P(G^c_\delta).
\end{equation*}
Since the right hand side is independent of $z$, integrating over $z$ with respect to $\mu^\epsilon$ implies
\begin{equation*}
\lim_{\delta \to 0} \limsup_{\epsilon \to 0} \P^{\mu^\epsilon} \paren[\Big]{\sup_{ \substack{s,t \in [0,T] \\ |t - s| \leq \delta}} |Y^\epsilon(t) - Y^\epsilon(s)| > a } = 0
\end{equation*}
holds for any $a > 0$.
This shows tightness of $Y^\epsilon$ in $C([0,T])$, finishing the proof of Lemma~\ref{l:FCtightness}.
\end{proof}
\subsection{Uniqueness for the Martingale Problem (Lemma~\ref{l:MPuniqness})}\label{s:MPuniqness}
The proof of Lemma~\ref{l:MPuniqness} relies on the existence of regular solutions to the corresponding parabolic equation.
We state this result next.
\begin{lemma}\label{l:PDEexistence}
  For all $f \in \mathcal D(A)$, there exists a solution to
  \begin{equation}\label{e:dtuEqAu}
    \partial_t u - A u = 0\,,\qquad
    u(\cdot, 0) = f\,,\qquad
    \text{with }
    u(\cdot, t) \in \mathcal D(A)\,.
  \end{equation}
\end{lemma}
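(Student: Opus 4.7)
The plan is to prove Lemma~\ref{l:PDEexistence} by showing that $(A, \mathcal D(A))$ is closable and its closure generates a strongly continuous contraction semigroup $\{T_t\}_{t \geq 0}$ on $C_0(\Omega_0)$, so that $u(\cdot, t) \defeq T_t f$ furnishes the required solution. This reduces Lemma~\ref{l:PDEexistence} to verifying the hypotheses of the Lumer--Phillips theorem: dissipativity of $A$ and density of the range of $\lambda I - A$ for some $\lambda > 0$.

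First I would verify dissipativity directly from the maximum principle. Given $g \in \mathcal D(A)$, pick $z_0 = (x_0, y_0)$ where $|g|$ attains its maximum; such a point exists because $g \in C_0(\Omega_0)$, and after replacing $g$ by $-g$ if necessary we may assume $g(z_0) = \|g\|_\infty \geq 0$. If $y_0 \in (0,1)$ the second derivative test gives $\partial_y^2 g(z_0) \leq 0$; if $y_0 = 1$ the Neumann condition in~\eqref{e:DAflux} and a Hopf-type expansion yield the same inequality; and if $y_0 = 0$ we have $\partial_x^2 g(x_0, 0) \leq 0$ and $\partial_y g(x_0, 0) \leq 0$, so by the coupling relation in~\eqref{e:DAflux},
\begin{equation*}
    \partial_y^2 g(x_0, 0) = \partial_x^2 g(x_0, 0) + \alpha\, \partial_y g(x_0, 0) \leq 0\,.
\end{equation*}
In every case $A g(z_0) \leq 0$, which gives the Lumer--Phillips estimate $\|(\lambda I - A) g\|_\infty \geq \lambda \|g\|_\infty$ for all $\lambda > 0$.

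Second I would verify the range condition. For $f$ in a dense subspace (e.g.\ Schwartz in $x$, smooth up to the boundary in $y$), I would solve the resolvent equation $\lambda u - \tfrac{1}{2}\partial_y^2 u = f$ on $(0,1)$ by taking the Fourier transform in $x$. Translation invariance of the domain and of~\eqref{e:DAflux} diagonalize the problem: at each frequency $\xi$, one solves the two-point BVP
\begin{equation*}
    \lambda \hat u(\xi, y) - \tfrac{1}{2} \partial_y^2 \hat u(\xi, y) = \hat f(\xi, y)\,, \qquad y \in (0, 1)\,,
\end{equation*}
with $\partial_y \hat u(\xi, 1) = 0$ and, at $y = 0$, the algebraic condition $-\xi^2 \hat u(\xi, 0) + \alpha\, \partial_y \hat u(\xi, 0) = 2\lambda \hat u(\xi, 0) - 2\hat f(\xi, 0)$ obtained by substituting the ODE at $y = 0$ into~\eqref{e:DAflux}. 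The associated homogeneous problem has only the trivial solution for $\lambda > 0$ (the relevant $2 \times 2$ determinant is bounded away from zero uniformly in $\xi$), so the BVP has a unique explicit solution built from $\cosh$ and $\sinh$ in $y$ with coefficients smooth and rational in $\xi$. Inverting the Fourier transform and checking decay at infinity produces $u \in \mathcal D(A)$.

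Combining the two steps, Lumer--Phillips yields a contraction $C_0$-semigroup $\{T_t\}$ on $C_0(\Omega_0)$ generated by $\overline A$, and setting $u(\cdot, t) = T_t f$ then satisfies $\partial_t u = A u$ with $u(\cdot, t) \in \mathcal D(A)$ for every $t \geq 0$ whenever $f \in \mathcal D(A)$ (by the standard fact that $T_t$ preserves the domain of its generator). The principal obstacle is the non-standard coupling condition at $y = 0$, which mixes $x$- and $y$-derivatives and precludes direct use of classical Schauder theory or standard reflecting-boundary semigroup results; the Fourier-in-$x$ reduction exploits translation invariance to replace it with a tractable algebraic condition frequency by frequency. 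An alternative purely probabilistic route is to use the process $Z$ from Section~\ref{s:timechange} and set $T_t f(z) \defeq \E^z f(Z_t)$, invoking Lemma~\ref{l:Zgen} to identify $A$ as its generator and then appealing to standard semigroup theory for the regularity of $u(\cdot, t)$.
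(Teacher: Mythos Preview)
Your semigroup approach via Lumer--Phillips is genuinely different from the paper's and is conceptually sound, but there is a gap at the final step.  You verify dissipativity and \emph{density} of the range of $\lambda I - A$; Lumer--Phillips then tells you that the \emph{closure} $\overline A$ generates a contraction semigroup, so $T_t f \in \mathcal D(\overline A)$ for $f \in \mathcal D(\overline A)$.  But the lemma asks for $u(\cdot,t) \in \mathcal D(A)$, i.e.\ for $C^2_b$ regularity and the flux condition~\eqref{e:DAflux} in the classical sense, and this is exactly what is used in the proof of Lemma~\ref{l:MPuniqness} (the martingale problem is posed on $\mathcal D(A)$, so the test function $u(\cdot,T-t)$ must lie there).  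The ``standard fact that $T_t$ preserves the domain of its generator'' gives you $\mathcal D(\overline A)$, not $\mathcal D(A)$, unless you know $A$ is already closed.  To close the gap you must upgrade density of the range to \emph{surjectivity} of $\lambda I - A$ onto $C_0(\Omega_0)$: your Fourier-in-$x$ construction actually produces an explicit translation-invariant resolvent kernel, and checking that convolution with this kernel maps all of $C_0$ into $C^2_b$ (with the boundary relations) would finish the job, but this requires an additional kernel estimate you have not supplied.

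For comparison, the paper avoids the abstract semigroup route altogether.  It first reduces to the boundary trace $v(x,t)=u(x,0,t)$, which by Proposition~\ref{p:ftime} satisfies the time-fractional Basset-type equation~\eqref{e:ev}; existence for this equation is quoted from~\cite{Chen17} together with a Duhamel-type argument for the inhomogeneous term.  The full solution $u$ is then reconstructed from $v$ and the initial data $f$ via the one-dimensional heat kernel on $(0,1)$ with Dirichlet condition at $y=0$ and Neumann at $y=1$, and the $C^{2,1}$ regularity and the flux condition~\eqref{e:DAflux} are read off directly from this explicit representation.  Your approach is more systematic and would generalize better to non-translation-invariant variants; the paper's approach is more hands-on and delivers the required classical regularity without the $\mathcal D(A)$ versus $\mathcal D(\overline A)$ issue.
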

Given Lemma~\ref{l:PDEexistence}, the proof of Lemma~\ref{l:MPuniqness} is standard (see for instance~\cite{RogersWilliams00a,EthierKurtz86}).
For the readers convenience, we describe it briefly here.
\begin{proof}[Proof of Lemma~\ref{l:MPuniqness}]
  Suppose $Z, Z'$ are two processes satisfying the martingale problem for $A$.
  Let $f \in \mathcal D(A)$ be any test function, and $u$ be the solution in $\mathcal D(A)$ of $\partial_t u - Au = 0$ with initial data $f$.
  Then for any $z \in \Omega_0$, and fixed $T > 0$, the processes $u(Z_t, T-t)$ and $u(Z'_t, T-t)$ are both martingales under the measure $P^z$.
  Hence
  \begin{align*}
    \E^\mu f(Z_T)
      &= \int_{\Omega_0}\E^z f(Z_T) \, \mu(dz)
      = \int_{\Omega_0}\E^z u(Z_t, T-t) \, \mu(dz)
      = \int_{\Omega_0} u(z, T) \, \mu(dz)
      \\
      &= \int_{\Omega_0}\E^z u(Z'_t, T-t) \, \mu(dz)
      = \int_{\Omega_0}\E^z f(Z'_T) \, \mu(dz)
      = \E^\mu f(Z'_T)\,.
  \end{align*}
  Since $\mathcal D(A)$ is dense in $C_0(\Omega_0)$ this implies $Z$ and $Z'$ have the same one dimensional distributions.
  By the Markov property, this in turn implies that the laws of $Z$ and $Z'$ are the same.
\end{proof}

It remains to prove Lemma~\ref{l:PDEexistence}.
\begin{proof}[Proof of Lemma~\ref{l:PDEexistence}]
  Let $v(x, t) = u(x, 0, t)$.
  Since~\eqref{e:dtuEqAu} is equivalent to~\eqref{e:rho1}--\eqref{e:rho3}, Proposition~\ref{p:ftime}%
  \footnote{
    We remark that the proof of Proposition~\ref{p:ftime} is self contained, and does not rely on Theorem~\ref{t:zlimfat}.
    Thus its use here is valid and does not lead to circular logic loop.%
  }
  implies that~$v$ satisfies the Basset type equation~\eqref{e:ev}.
  For the homogeneous equation associated with~\eqref{e:ev}, existence and uniqueness is proved in~\cite{Chen17}.
  The inhomogeneous equation can be solved using an analog of Duhamel's principle~\cites{Umarov12,UmarovSaydamatov06}.
  Explicitly, for $s \geq 0$, let $\tilde v_s$ be a solution to the equation
  \begin{subequations}
  \begin{align}
    \label{e:tvsEvol}
    \span
    \partial_t \tilde v_s(x,t) + \frac{\alpha}{2} \partial_t^w \tilde v_s(x,t)
      - \frac{1}{2}\partial_x^2 \tilde v_s(x,t) = 0 \,,
      && \text{for } t > s\,,
    \\
    \label{e:tvsId}
    \span
      \tilde v_s(x,s) = \paren[\Big]{I + \frac{\alpha}{2} \mathcal I^{w}_s}^{-1} \frac{\alpha f(x,\cdot)}{2} \,.
  \end{align}
  \end{subequations}
  Here $\mathcal I^w_\cdot$ is the integral operator with kernel $w$ defined by
  \begin{equation*}
    \mathcal I^w_t h = \int_0^t w( t - s ) h(s) \, ds\,,
  \end{equation*}
  for any function~$h \colon (0, \infty) \to \R$.
  Since $\mathcal I^w$ is a compact operator, the operator $(I + \mathcal (\alpha/2)I^w )$ is invertible, ensuring the initial condition~\eqref{e:tvsId} can be satisfied.
  For convenience, define $\tilde v_s( x, r ) = \tilde v_s(x, s )$ when $r < s$.
  Now, one can directly check that the function $v$ defined by
  \begin{equation*}
    v(x, t) \defeq \int_0^t \tilde v_s( x, t ) \, ds \,,
  \end{equation*}
  is a strong solution to the inhomogeneous equation~\eqref{e:ev}.

  Since $u$ satisfies the heat equation for $y \in (0, 1)$ we can write $u$ in terms of $v$ and $f$ using the heat kernel.
  Explicitly, we have
  \begin{equation*}
    u(x, y, t)
      = \frac{\alpha}{2} \int_0^1 K_t''(y, z) f(z) \, dz
	+ \kappa \int_0^t \partial_z K_{t-s}''(y, 0) v(x, s) \, ds\,,
  \end{equation*}
  where $K''$ is the heat kernel on $(0, 1)$ with Dirichlet boundary conditions at $y = 0$ and Neumann boundary conditions at $y = 1$.
  Since $v$ is $C^{2,1}$ this immediately implies $u \in C^{2, 1}$.
  Thus to show $u(\cdot, t) \in \mathcal D(A)$ we only need to verify the flux condition~\eqref{e:DAflux}.
  This, however, follows immediately from the fact that $\partial_y^2 u(x, 0, t)  = 2 \partial_t u(x, 0, t) = 2 \partial_t v(x, t)$ and equation~\eqref{e:ev2}.
\end{proof}

\subsection{Generator Estimate (Lemma~\ref{l:FCgenerator}).}\label{s:FCgenerator}


The main idea behind the proof of Lemma~\ref{l:FCgenerator} is to balance the local time $Z^\epsilon$ spends at the ``gate'' between the spine and teeth, and the time spent in the spine.
Explicitly, let $S \defeq \R \times (-\epsilon,0)$ denote the spine of~$\Omega_\epsilon$, and $T$, defined by
\[
T \defeq \bigcup_{k \in \epsilon \Z} \set[\big]{ (x,y) \st |x - \epsilon k| < \frac{\alpha\epsilon^2}{2}, \ y \in (0,1) }\,,
\]
denote the collection of the teeth (see~\eqref{e:OmegaEp} and Figure~\ref{f:fatcomb}).
Let the ``gate'' $G$, defined by 
\[
G \defeq \partial T \cap \partial S = \bigcup_{k \in \epsilon \Z} \set[\big]{ (x,0) \st |x - \epsilon k| \leq \frac{\alpha\epsilon^2}{2} } \,,
\]
denote the union of short segments connecting the spine and teeth.  Let $L^G_t$ denote the local time of $Z^\epsilon_t$ at the set $G$. Now the required local time balance can be stated as follows.

\begin{lemma}\label{l:FCLocalTimeG}
  For every $g \in C^1_b(\R)$ and $K \subseteq \Omega_0$ compact we have 
  \begin{equation}\label{claim2G}
    \lim_{\epsilon \to 0} \sup_{z \in K \cap \Omega_\epsilon}  \E^z\paren[\Big]{ \alpha\int_0^t  g(X_s^\epsilon)  \one_{\{Y_s^\epsilon < 0\}} \,ds  - 2 \int_0^t g(X_s^\epsilon) dL^{G}_s} = 0 \,.
  \end{equation}
\end{lemma}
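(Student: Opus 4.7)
The plan is to convert~\eqref{claim2G} into a statement about the smallness of a corrector via It\^o's formula. Since $K$ is compact and $g \in C^1_b$, a preliminary mesoscopic approximation (combined with the horizontal tightness of $X^\epsilon$) reduces us to the case where $g$ is a single constant on a bounded piece, with only an $O(\eta)$ error at scale $\eta$ with $\epsilon \ll \eta \to 0$.

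The core step is to construct a corrector $\psi_\epsilon \colon \Omega_\epsilon \to \R$ solving the Neumann problem with a prescribed jump in the normal derivative across the interior interface $G$:
\begin{align*}
  \tfrac{1}{2} \Delta \psi_\epsilon &= \tfrac{\alpha}{2} g(x) \one_{\set{y < 0}} &&\text{in } \Omega_\epsilon \setminus G\,, \\
  [\partial_y \psi_\epsilon]_G &= -g(x) &&\text{on } G\,, \\
  \partial_\nu \psi_\epsilon &= 0 &&\text{on } \partial \Omega_\epsilon\,,
\end{align*}
where $[\partial_y \psi_\epsilon]_G \defeq \partial_y \psi_\epsilon(x, 0^+) - \partial_y \psi_\epsilon(x, 0^-)$. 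The relative sizes of the bulk and jump data are forced by divergence-theorem compatibility: cell integration yields $-[\partial_y \psi_\epsilon]_G \cdot \alpha \epsilon^2 = \alpha g \cdot \epsilon^2$, giving $[\partial_y \psi_\epsilon]_G = -g$. Applying the It\^o--Tanaka formula to $\psi_\epsilon(Z^\epsilon_t)$, the bulk Laplacian contribution produces $\tfrac{\alpha}{2} \int_0^t g(X^\epsilon_s) \one_{\set{Y^\epsilon_s < 0}}\,ds$, the crossing jump at $G$ produces $-\int_0^t g(X^\epsilon_s)\,dL^G_s$, and the Neumann conditions kill all boundary-local-time contributions. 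Rearranging and taking expectations yields
\begin{equation*}
  \E^z\brak[\Big]{\alpha \int_0^t g(X^\epsilon_s)\one_{\set{Y^\epsilon_s < 0}}\,ds - 2 \int_0^t g(X^\epsilon_s)\,dL^G_s} = 2 \E^z\brak[\big]{\psi_\epsilon(Z^\epsilon_t) - \psi_\epsilon(Z^\epsilon_0)}\,.
\end{equation*}

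The main obstacle is to show $\sup_{z \in K \cap \Omega_\epsilon} \abs{\psi_\epsilon(z)} \to 0$ as $\epsilon \to 0$, which forces the right-hand side above to vanish. Standard energy estimates produce constants that degenerate with the geometry of $\Omega_\epsilon$, and the pure Neumann structure precludes comparison-principle arguments. This uniform bound on $\psi_\epsilon$ is exactly what the probabilistic oscillation estimate (Proposition~\ref{p:uosc1}) supplies, and with it in hand the lemma follows.
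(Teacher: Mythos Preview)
Your approach is essentially the paper's: build a corrector satisfying a Neumann problem whose It\^o formula reproduces the two terms in~\eqref{claim2G}, then invoke Proposition~\ref{p:uosc1} for the oscillation bound. The differences are organizational rather than conceptual.

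The paper discretizes $g$ at the \emph{tooth} scale $\epsilon$ (not a mesoscopic scale $\eta$): it replaces $g(X^\epsilon_s)$ by $g(\epsilon k)$ on $\{|X^\epsilon_s - \epsilon k| < \epsilon/2\}$, controls the remainder using uniform continuity of $g$ together with the a priori bound $\E^z L^G_t = O(1)$ (which you do not mention but is needed for the $dL^G$ part of the remainder), and then treats each $k$ separately with the single-cell corrector $u^\epsilon$ of Proposition~\ref{p:uosc1}. The per-cell oscillation is $O(\epsilon^2|\ln\epsilon|)$; summing over the $O(1/\epsilon)$ relevant teeth yields $O(\epsilon|\ln\epsilon|)\to 0$.

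Your global corrector $\psi_\epsilon$ is, by linearity, a superposition of $O(1/\epsilon)$ translated copies of the paper's $u^\epsilon$. Proposition~\ref{p:uosc1} as stated bounds only a \emph{single-cell} corrector, so your sentence ``this uniform bound on $\psi_\epsilon$ is exactly what Proposition~\ref{p:uosc1} supplies'' hides one step: you still need to sum the per-cell oscillations (using $\operatorname{osc}(\sum_k f_k)\le\sum_k\operatorname{osc}(f_k)$) to get $\|\psi_\epsilon\|_\infty = O(\epsilon|\ln\epsilon|)$. Once that is made explicit, your argument and the paper's coincide.
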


Next, we will also need to show that the local times on the left edges and right edges of the teeth balance.
Explicitly, let $\partial T^-$, $\partial T^+$ defined by
\begin{gather*}
  \partial T^- \defeq \set[\big]{ (x, y) \in Z^\epsilon \st x \in \epsilon \Z - \frac{\alpha\epsilon^2}{2},\ y > 0 }\,,
  \\
  \llap{\text{and}\qquad} \partial T^+ \defeq \set[\big]{ (x, y) \in Z^\epsilon \st x \in \epsilon \Z + \frac{\alpha\epsilon^2}{2},\ y > 0 }\,.
\end{gather*}
denote the left and right edges of the teeth respectively.
Let $L^+$ and $L^-$ be the local times of $Z^\epsilon$ about $\partial T^-$ and $\partial T^+$ respectively, and let $L^{\pm}$ denote the difference
\begin{equation*}
  L^\pm = L^- - L^+\,.
\end{equation*}
The balance on the teeth boundaries we require is as follows.
\begin{lemma}\label{l:FCLocalTimeT}
  For every $f \in \mathcal D(A)$ and $K \subseteq \Omega_0$ compact, we have 
  \begin{equation}\label{claim1x}
    \lim_{\epsilon \to 0} \sup_{z \in K \cap \Omega_\epsilon}   \E^z \paren[\Big]{ \int_0^t \frac{1}{2} \partial_x^2 f(Z^{\epsilon,+}_s) \one_{\{Y_s^\epsilon > 0\}}\,ds +  \int_0^t \partial_x f(Z^{\epsilon,+}_s) \, d L^{\pm}_s  } = 0\,.
  \end{equation}
\end{lemma}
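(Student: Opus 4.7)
The plan is to apply It\^o's formula to an auxiliary function that isolates the horizontal ``excess'' of $f$ inside each tooth, and then to exploit the reflection symmetry of each tooth to close the argument. For $x$ in the $k$-th tooth (so $|x - x_k| \le \alpha\epsilon^2/2$ with $x_k = \epsilon k$) and $y > 0$, define
\begin{equation*}
  \Phi_\epsilon(x,y) \defeq f(x,y) - f(x_k,y).
\end{equation*}
Since $f \in \mathcal D(A) \subset C^2_b(\Omega_0)$ and $|x - x_k| = O(\epsilon^2)$, one has $|\Phi_\epsilon|, |\partial_y\Phi_\epsilon|, |\partial_y^2\Phi_\epsilon| = O(\epsilon^2)$ uniformly, while $\partial_x \Phi_\epsilon = \partial_x f$ and $\partial_x^2 \Phi_\epsilon = \partial_x^2 f$ hold exactly. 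The Neumann condition $\partial_y f(x,1) = 0$ transfers to $\partial_y \Phi_\epsilon(x,1) = 0$, so normal reflection at the top of each tooth contributes no boundary term.

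The first step is to apply It\^o's formula to $\Phi_\epsilon(Z^\epsilon_s)$ on each maximal interval $(t_1^{(i)},t_2^{(i)})$ on which $Y^\epsilon_s > 0$ (an excursion into some tooth). Using the semimartingale decomposition $dX^\epsilon = dW^x + dL^\pm_s$ and $dY^\epsilon = dW^y - dL^{y=1}_s$ inside the tooth, the identities above yield
\begin{equation*}
  \Phi_\epsilon(Z^\epsilon_{t_2^{(i)}}) - \Phi_\epsilon(Z^\epsilon_{t_1^{(i)}}) = M^{(i)} + \int_{t_1^{(i)}}^{t_2^{(i)}} \partial_x f(Z^\epsilon)\, dL^\pm_s + \tfrac{1}{2}\int_{t_1^{(i)}}^{t_2^{(i)}} \partial_x^2 f(Z^\epsilon)\, ds + O(\epsilon^2(t_2^{(i)} - t_1^{(i)})),
\end{equation*}
with $M^{(i)}$ a martingale increment. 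Summing over all excursions in $[0,t]$ and using that $dL^\pm$ and $\one_{\{Y^\epsilon > 0\}}$ are supported on tooth visits, the right-hand side assembles into exactly the quantity inside~\eqref{claim1x}, plus a martingale and an $O(\epsilon^2 t)$ error. Since $Y^\epsilon = 0$ at every visit endpoint, the left-hand side telescopes to $\sum_i[f(X^\epsilon_{t_2^{(i)}},0) - f(X^\epsilon_{t_1^{(i)}},0)]$, which equals $\int_0^t \one_{\{Y^\epsilon_s > 0\}}\, d f(X^\epsilon_s,0)$.

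The second step processes this last quantity by a further It\^o formula applied to the one-dimensional map $x \mapsto f(x,0)$ along $X^\epsilon$; since $dL^\pm_s$ is supported on $\{Y^\epsilon > 0\}$, this rewrites it as a martingale plus $\int_0^t \partial_x f(X^\epsilon,0)\, dL^\pm_s + \tfrac{1}{2}\int_0^t \partial_x^2 f(X^\epsilon,0) \one_{\{Y^\epsilon > 0\}}\, ds$. Taylor-expanding $\partial_x f(X^\epsilon,0) = \partial_x f(x_k,0) + (X^\epsilon - x_k)\partial_x^2 f(x_k,0) + O(\epsilon^4)$ on the walls (where $X^\epsilon = x_k \pm \alpha\epsilon^2/2$), and applying It\^o to $(X^\epsilon - x_k)^2$ inside a tooth to obtain $\alpha\epsilon^2\, d(L^- + L^+)_s = \one_{\{Y^\epsilon > 0\}}\, ds + dN_s - d((X^\epsilon - x_k)^2)$ with $N$ a martingale, one finds that the $(X^\epsilon - x_k)$ correction in the expansion produces precisely $-\tfrac{1}{2}\int_0^t \partial_x^2 f(X^\epsilon,0) \one_{\{Y^\epsilon > 0\}}\, ds$, cancelling the second integral up to $O(\epsilon^2 t)$. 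The residual leading term is therefore $\sum_k \partial_x f(x_k,0)\, \E^z[L^{-,k}_t - L^{+,k}_t]$, where $L^{\pm,k}$ is the local time on the corresponding wall of the $k$-th tooth.

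The main obstacle is to show this residual tends to zero uniformly in $z \in K \cap \Omega_\epsilon$. The reflection symmetry $x \mapsto 2x_k - x$ of each tooth forces $\E^z[L^{-,k}_t - L^{+,k}_t] = 0$ whenever $z$ is symmetric about $x_k$, but for a general starting point this captures the imbalance in the mean horizontal displacement $\E^z X^\epsilon_t - z_x$. Standard energy estimates fail here because Poincar\'e constants on $\Omega_\epsilon$ degenerate as $\epsilon \to 0$, and comparison arguments do not apply to the pure Neumann problem. We expect to invoke the oscillation estimate of Proposition~\ref{p:uosc1}, applied to the corrector $z \mapsto \E^z[L^{-,k}_t - L^{+,k}_t]$ (which solves a Neumann heat problem on $\Omega_\epsilon$), together with the gate local-time balance of Lemma~\ref{l:FCLocalTimeG}, to supply the required uniform bound.
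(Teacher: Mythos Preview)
Your overall strategy---apply It\^o to a function that isolates the horizontal excess of $f$ inside each tooth---is exactly the paper's, and your first two steps track the paper's decomposition closely. Two structural differences are worth noting. First, you subtract only the constant term $f(x_k,y)$, whereas the paper subtracts the first-order Taylor polynomial $f(0,y) + x\,\partial_x f(0,y)$; this gives the paper $\tilde f = O(\epsilon^4)$ and $\partial_y^2\tilde f = O(\epsilon^4)$ per tooth, while you get only $O(\epsilon^2)$. Your coarser bound still suffices because you sum the $\partial_y^2\Phi_\epsilon$ error over total tooth time $\le t$, not per tooth. Second, you work excursion-by-excursion and never extend into the spine, whereas the paper extends $\tilde f$ by a cutoff near the gate; your route avoids the paper's term $R_3$ (expected time in a small spine ball near the gate) and the accompanying narrow-escape estimate, at the cost of having to justify summation over infinitely many tooth excursions---manageable, but it needs to be said.

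The genuine gap is your last paragraph. You call the residual $\sum_k \partial_x f(x_k,0)\,\E^z[L^{-,k}_t - L^{+,k}_t]$ the ``main obstacle'' and propose to handle it via Proposition~\ref{p:uosc1} and Lemma~\ref{l:FCLocalTimeG}. Those results concern the balance between spine occupation time and local time at the \emph{gate} $G$; they say nothing about the wall local times $L^\pm$, and the corrector you describe does not satisfy the Neumann problem of Proposition~\ref{p:uosc1}. In the paper this term (called $R_2$) is in fact the easiest piece. By reflection symmetry about the tooth axis $\{x = x_k\}$, one has $\E^{z'}[L^{-,k}_t - L^{+,k}_t] = 0$ for any $z'$ on the axis. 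Letting $\tau$ be the first hitting time of the axis and using the Markov property, the entire imbalance is produced on $[0,\tau)$, during which the process stays on one side of the axis and hence touches only one wall. Applying It\^o to the test function $x \wedge (\alpha\epsilon^2/2)$ (or its reflection) shows that the one-sided local time accumulated before $\tau$ is at most $\alpha\epsilon^2/2$. This gives $|\E^z[L^{-,k}_t - L^{+,k}_t]| \le C\epsilon^2$ uniformly in $k$ and $z$, and summing over the $O(1/\epsilon)$ relevant teeth yields $O(\epsilon)$. No oscillation estimate is needed.
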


Momentarily postponing the proofs of Lemmas~\ref{l:FCLocalTimeG} and~\ref{l:FCLocalTimeT}, we prove Lemma~\ref{l:FCgenerator}.
\begin{proof}[Proof of Lemma~\ref{l:FCgenerator}]
  Given $f \in \mathcal D(A)$, we define $f^\epsilon \colon \Omega_\epsilon \to \R$ by 
\[
f^\epsilon(x,y) \defeq f(x, y^+)\,.
\]
Thus, $f(Z^{\epsilon,+}_t) =  f^\epsilon(Z^\epsilon_t)$, and~\eqref{e:fconv} reduces to showing
  \begin{equation*}
  \lim_{\epsilon \to 0} \sup_{z \in K \cap \Omega_\epsilon}    \E^z\paren[\Big]{
      f^\epsilon(Z^\epsilon_t)
      - f^\epsilon(Z^\epsilon_0)
      - \int_0^t \frac{1}{2} \partial_y^2 f^\epsilon (Z_s^\epsilon) \, ds
    } = 0 \,.
  \end{equation*}

Since $f \in \mathcal{D}(A)$, we have $\partial_x^2 f(x,0) + \alpha\partial_y f(x,0) = \partial_y^2 f(x,0)$ and $\partial_y f(x,1) = 0$.  Therefore, the extension $f^\epsilon$ satisfies $\partial_x^2 f^\epsilon(x,y) = \partial_y^2 f^\epsilon(x,0^+) - \alpha\partial_y f^\epsilon(x,0^+)$ for $(x,y) \in S$, as well as $\partial_y f^\epsilon = 0$ for $(x,y) \in S$. Notice that $\partial_{y} f^\epsilon$ may be discontinuous across $G$. Using these facts and It\^o's formula, we compute
\begin{align*}
 \E^z \paren[\Big]{ f^\epsilon(Z^\epsilon_t) - f^\epsilon(Z^\epsilon_s)} & =  \E^z \paren[\Big]{  \int_0^t \frac{1}{2} \left( \partial_y^2 f(Z^{\epsilon,+}_s) + \partial_x^2 f(Z^{\epsilon,+}_s) \right) \one_{\{Y_s^\epsilon > 0\}}  \, ds} \\
& \quad +  \E^z \paren[\Big]{  \int_0^t \frac{1}{2} \partial_x^2 f(X_s^\epsilon,0^+)  \one_{\{Y_s^\epsilon < 0\}} \, ds} \\
& \quad  +  \E^z \paren[\Big]{  \int_0^t \partial_y f(X_s^\epsilon,0^+) \, dL^{G}_s + \int_0^t \partial_x f(Z^{\epsilon,+}_s) \, d L^{\pm}_s } \\
&  =  \E^z \paren[\Big]{  \int_0^t \frac{1}{2} \left( \partial_y^2 f(Z^{\epsilon,+}_s)   + \partial_x^2 f(Z^{\epsilon,+}_s)\right) \one_{\{Y_s^\epsilon > 0\}}  \, ds }  \\
& \quad +  \E^z \paren[\Big]{ \frac{1}{2}  \int_0^t \left( \partial_y^2 f(X_s^\epsilon,0^+) - \alpha\partial_y f(X_s^\epsilon,0^+) \right) \one_{\{Y_s^\epsilon < 0\}} \, ds} \\
& \quad +   \E^z\paren[\Big]{  \int_0^t \partial_y f(X_s^\epsilon,0^+) \, dL^{G}_s + \int_0^t \partial_x f(Z^{\epsilon,+}_s) \, d L^{\pm}_s }\,,
\end{align*}
and hence
  \begin{align*}
& \E^z \paren[\Big]{ f^\epsilon(Z^\epsilon_t) - f^\epsilon(Z^\epsilon_0) - \int_0^t \frac{1}{2} \partial_y^2 f^\epsilon (Z_s^\epsilon) \, ds}\\
 & \quad \quad \quad \quad = \E^z \paren[\Big]{ \int_0^t \frac{1}{2} \partial_x^2 f(Z^{\epsilon,+}_s) \one_{\{Y_s^\epsilon > 0\}}\,ds +  \int_0^t \partial_x f(Z^{\epsilon,+}_s) \, d L^{\pm}_s  }\\
 & \quad \quad \quad \quad  \quad -  \frac{1}{2} \E^z \paren[\Big]{ \int_0^t  \alpha\partial_y f(X_s^\epsilon,0^+)  \one_{\{Y_s^\epsilon < 0\}} \,ds  - 2 \int_0^t \partial_y f(X_s^\epsilon,0^+) dL^{G}_s}\,.
  \end{align*}
  Using Lemmas~\ref{l:FCLocalTimeG} and~\ref{l:FCLocalTimeT} we see that the supremum over $z \in \Omega_\epsilon \cap K$ of the right hand side of the above vanishes as $\epsilon \to 0$.
  This proves Lemma~\ref{l:FCgenerator}.
\end{proof}

It remains to prove Lemmas~\ref{l:FCLocalTimeG} and~\ref{l:FCLocalTimeT}, and we do this in Sections~\ref{s:FCLocalTimeG} and~\ref{s:FCLocalTimeT} respectively.

\subsection{Local Time at the Gate (Lemma~\ref{l:FCLocalTimeG}).}\label{s:FCLocalTimeG}

The crux in the proof of Lemma~\ref{l:FCLocalTimeG} is an oscillation estimate on the solution to a specific Poisson equation with Neumann boundary conditions (Proposition~\ref{p:uosc1}, below).
We state this when it is first encountered, and prove it in the next subsection.

\begin{proof}[Proof of Lemma~\ref{l:FCLocalTimeG}]
The expectation in~\eqref{claim2G} can be written as
\begin{multline} \label{claim2Gsum}
\E^z \paren[\Big]{ \int_0^t  \alpha g(X_s^\epsilon)  \one_{\{Y_s^\epsilon < 0\}} \,ds  - 2 \int_0^t g(X_s^\epsilon) \, dL^{G}_s} \\
= \sum_{k \in \Z} g(\epsilon k)  \E^z \paren[\Big]{ \alpha\int_0^t \one_{\{Y_s^\epsilon < 0\}} \one_{\{ |X_s^\epsilon - \epsilon k| < \epsilon /2 \}} \,ds  - 2 \int_0^t \one_{\{ |X_s^\epsilon - \epsilon k| < \epsilon /2 \}} \, dL^{G}_s} \\
+ R^\epsilon
\end{multline}
where the remainder term $R^\epsilon$ is given by
\begin{multline*}
  R^\epsilon \defeq
	\alpha\sum_{k \in \Z}\E^z \paren[\Big]{ \int_0^t (g(X^\epsilon_s) - g(\epsilon k))\one_{\{Y_s^\epsilon < 0\}} \one_{\{ |X^\epsilon_s - \epsilon k| < \epsilon /2 \}} \,ds}\\
 -2\E^z \paren[\Big]{\int_0^t (g(X^\epsilon_s) - g(\epsilon k)) \one_{\{ |X^\epsilon_s - \epsilon k| < \epsilon /2 \}} dL^{G}_s} \defeq R^\epsilon_1 + R^\epsilon_2\,.
\end{multline*}
To estimate $R^\epsilon$, for any $\delta > 0$ we choose sufficiently large $M > 0$ such that
\begin{equation}\label{R-largek}
\sup_{(x,y) \in K}\E \paren[\Big]{\int_0^t\one_{\set{|x| + 4 |W_s| + 2 \geq M}} \, ds} < \frac{\delta}{\norm{g}_\infty} \,,
\end{equation}
where $W$ is a standard Brownian motion in $\R$. Here we write $\Pm$ and $\Em$ (without superscripts) to denote the probability measure and expected value for a standard Brownian motion. By Lemma \ref{lem:BrownianComp}, we have
\begin{equation*}
\P^z(|X^\epsilon_s| + 1 \geq M) \leq \P(x + 4 |W_s| + 2 \geq M ) \,,
\end{equation*}
where $z = (x,y)$ and so the above estimate can be applied for $X^\epsilon$ independent of $\epsilon \in (0,1/2]$. Since $g$ is continuous and hence uniformly continuous on $[-M,M]$, for any $\delta > 0$ we can choose $\epsilon > 0$ such that if $x_1,x_2 \in [-M,M]$ with $\abs{x_1 - x_2} < \epsilon$ then $\abs{g(x_1) -  g(x_2)} < \delta$. For such $\epsilon$ and for integers $k \in \epsilon^{-1}[-M,M]$ we have
\begin{multline}\label{R-smallk}
\E^{(x,y)}
\int_0^t\abs{g(\epsilon k) - g(X_s^\epsilon)}
\one_{\{Y_s^\epsilon < 0,\; \abs{X^\epsilon_s - \epsilon k} < \epsilon/2\}}\, ds\\
\leq \delta \int_0^t\P^z\paren[\Big]{\abs{X^\epsilon_s - \epsilon k} < \epsilon/2}\, ds\, .
\end{multline}
Combining the above with \eqref{R-largek}, gives the following estimate of $R^\epsilon_1$
\begin{align*}
\abs{R^\epsilon_1} &\leq \alpha\Biggl(\delta \sum_{\substack{k \in \mathbb{Z} \\ \epsilon k \in [-M,M]}}\int_0^t\P^z\paren[\Big]{\abs{X^\epsilon_s - \epsilon k} < \frac{\epsilon}{2}}\, ds\\
&\qquad\qquad+  2\norm{g}_\infty\sum_{|\epsilon k| >  M}
\int_0^t\P^z\paren[\Big]{\abs{X^\epsilon_s - \epsilon k} < \frac{\epsilon}{2}}\, ds\Biggr)
\\
&\leq \alpha(t + 2)\delta \,.
\end{align*}
Since $\delta > 0$ was arbitrary this proves $R^\epsilon_1\to 0$ as $\epsilon \to 0$. An estimate for $R^\epsilon_2$ can be obtained in the same manner.
Namely,
\begin{align*}
\abs{R^\epsilon_2} &\leq 2\paren[\bigg]{\delta\E^z\paren[\big]{L^G_t} +2\norm{g}_\infty\sum_{\substack{k \in \mathbb{Z} \\ |\epsilon k| \geq M}}
\E^z\paren[\Big]{\int_0^t\one_{\set{\abs{X^\epsilon_s - \epsilon k} < \epsilon/2}}\, dL^G_s}}\\
&\leq c(t)\delta + 2\norm{g}_\infty
\E^x\paren[\Big]{\int_0^t\one_{\set{|X^\epsilon_s|+1 \geq  M}}\, dL^G_s}\,.
\end{align*}
Let $\tau = \inf\set{t \st \abs{X_t^\epsilon} + 1 \geq M}$ and note that by the Markov property
\begin{align*}
  \nonumber
\E^z\paren[\Big]{\int_0^t\one_{\set{|X^\epsilon_s| +1 \geq M}}\, dL^G_s}
  &\leq \E^z\paren[\Big]{\E^{X^\epsilon_\tau}\paren[\Big]{L^G_{t - t\wedge\tau}}}
  \\
  &\leq \paren[\Big]{\sup_{z'}\E^{z'}\paren[\big]{L^G_t}} \P^z(\tau < t)\,.
\end{align*}
Applying It\^o's formula to $w(Z^\epsilon)$, where
\begin{equation*}
w(x,y) \defeq
\begin{dcases}
\frac{1}{2}(1 - y)^2\,, & y \in [0,1]\,,\\
0, & \text{otherwise,}
\end{dcases}
\end{equation*}
 shows
 \begin{equation}\label{e:ELG}
   \E^z(L^G_t) = O(1)
   \quad\text{as } t \to 0\,.
 \end{equation}
  By choosing $M$ larger, if necessary, we have 
 \begin{equation*}
 \sup_{z \in K} \P^z(\tau<t)<\delta
 \end{equation*} 
 for all $\epsilon \in (0,1/2]$.  Since $\delta > 0$ is arbitrary, this shows that $R^\epsilon_2\to 0$ as $\epsilon \to 0$.

Next, we need a PDE estimate to control the expression
\begin{align*}
 \E^z\paren[\Big]{ \alpha\int_0^t\one_{\{Y_s^\epsilon < 0\}} \one_{\{ |X_s - \epsilon k| < \epsilon /2 \}} \,ds  - 2 \int_0^t \one_{\{ |X_s - \epsilon k| < \epsilon /2 \}} dL^{G}_s}. 
\end{align*}
from \eqref{claim2Gsum}. To this end, let $Q$ be a region of width~$\epsilon$ directly below the tooth at~$x = 0$, and $G_0$ be the component of $G$ contained in $[-\epsilon/2,\epsilon/2] \times \R$.
Explicitly, let
\begin{equation}\label{e:QG0}
  Q \defeq
    \brak[\Big]{ -\frac{\epsilon}{2}, \frac{\epsilon}{2}}
    \times \brak[\big]{-\epsilon,0}
  \qquad\text{and}\qquad
    G_0 = \set[\Big]{ (x,0) \st
	- \frac{\alpha\epsilon^2}{2} < x < \frac{\alpha\epsilon^2}{2} }\,.
\end{equation}
Let $\mu^\epsilon$ denote the one dimensional Hausdorff measure supported on~$G_0$ (i.e. a measure supported on $G_0$).

\begin{proposition}\label{p:uosc1}
  Let the function $u^\epsilon\colon \Omega_\epsilon \to \R$ be the solution of
\begin{alignat}{2}
  \label{uoscpde}
  \span
    - \Delta u^\epsilon =  \alpha\one_{Q} - \mu^\epsilon
	&\qquad& \text{in } \Omega_\epsilon
  \\
  \label{e:uoscBC}
  \span
    \partial_\nu u^\epsilon = 0
      && \text{on } \partial \Omega_\epsilon \,,
\end{alignat}
with the normalization condition
\begin{equation} \label{uepsnorm1}
\inf_{\Omega_\epsilon} u^\epsilon = 0 \,.
\end{equation}
Then there exists a constant $C > 0$, independent of~$\epsilon$ such that
\begin{equation}\label{uosc1}
\sup_{\Omega_\epsilon} u^\epsilon(z)
  \leq C \epsilon^{2} \abs{\ln \epsilon}\,.
\end{equation}
\end{proposition}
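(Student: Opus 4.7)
The plan is to recast $u^\epsilon$ probabilistically via It\^o--Tanaka and then control its oscillation by coupling two copies of the reflected Brownian motion $Z^\epsilon$ in $\Omega_\epsilon$. Apply It\^o--Tanaka to $u^\epsilon(Z^\epsilon_t)$: the Neumann condition \eqref{e:uoscBC} cancels the boundary local time on $\partial\Omega_\epsilon$, while the distributional source $-\Delta u^\epsilon = \alpha\one_Q - \mu^\epsilon$ contributes a bulk integral against $Q$ and an interior local time on the gate $G_0$ (across which $\partial_y u^\epsilon$ has a jump equal to $\one_{G_0}$). Taking expectations and rearranging yields the representation
\[
u^\epsilon(z) = \Em^z u^\epsilon(Z^\epsilon_t) + \frac{\alpha}{2}\Em^z\int_0^t \one_Q(Z^\epsilon_s)\,ds - \frac{1}{2}\Em^z L^{G_0}_t,
\]
for every $z\in\Omega_\epsilon$ and $t\geq 0$, where $L^{G_0}$ is the local time of $Z^\epsilon$ on $G_0$. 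Subtracting this identity at two points $z_1,z_2$, together with the normalization $\inf u^\epsilon = 0$, reduces the claimed bound to showing that each of the three resulting differences is $O(\epsilon^2|\ln\epsilon|)$ uniformly in $z_1,z_2$.

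To handle the first term, I would build a coupling of two reflected Brownian motions from $z_1$ and $z_2$ that coalesces at a stopping time $\tau$ with $\Em\tau$ bounded uniformly in $\epsilon$. A workable strategy exploits the $\epsilon\Zm$-periodicity of $\Omega_\epsilon$ in $x$ and its bounded extent in $y$: run the two processes until both are simultaneously inside a common spine cell of width $\epsilon$, which occurs in $O(1)$ time with positive probability, and then apply a reflection/synchronous coupling to coalesce inside the spine. After $\tau$ the strong Markov property yields identical contributions, so the first-term difference is bounded by $\mathrm{osc}(u^\epsilon)\cdot\Pm(\tau>t)$, which can be made negligible by choosing $t$ of order $|\ln\epsilon|$.

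The main obstacle is the uniform estimate
\[
\sup_{z\in\Omega_\epsilon}\Em^z L^{G_0}_t = O(\epsilon^2|\ln\epsilon|)
\qquad\text{and}\qquad
\sup_{z\in\Omega_\epsilon}\Em^z\int_0^t \one_Q(Z^\epsilon_s)\,ds = O(\epsilon^2),
\]
for $t = O(|\ln\epsilon|)$. The occupation bound follows from standard two-dimensional heat kernel estimates on $\Omega_\epsilon$, since $|Q|=\epsilon^2$ and the density of $Z^\epsilon$ is uniformly $O(1)$ in the spine. The logarithmic correction in the local time bound is where the difficulty really lies: although $G_0$ has one-dimensional measure only $\alpha\epsilon^2$, a two-dimensional Brownian motion resolves a segment of length $\alpha\epsilon^2$ sitting inside a spine cell of width $\epsilon$ with a discrepancy $|\ln(\alpha\epsilon^2/\epsilon)| = O(|\ln\epsilon|)$, dictated by the $\log$-singularity of the two-dimensional fundamental solution. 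To execute this I would localize to the periodic cell formed by one spine segment glued to one tooth, where reflected BM is ergodic with uniform invariant measure, and bound $\Em^z L^{G_0}_t$ either by comparison with an explicit quasi-harmonic function on the cell that captures the logarithmic profile near $G_0$, or via a capacity-type estimate for the gate inside the cell. Translation invariance in $x$ and the strong Markov property at the first hit of the spine then upgrade the bound to hold uniformly in $z \in \Omega_\epsilon$. Combining these estimates with the coupling step and choosing $t$ so that $\Pm(\tau>t) \ll \epsilon^2$ yields $\sup u^\epsilon \leq C\epsilon^2|\ln\epsilon|$, as claimed.
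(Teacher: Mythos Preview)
Your representation via It\^o--Tanaka is correct and is indeed how the paper connects $u^\epsilon$ to the process (this is exactly the identity used in the proof of Lemma~\ref{l:FCLocalTimeG}). However, the strategy built on it has a genuine gap: your two key estimates
\[
\sup_{z}\E^z\!\int_0^t \one_Q(Z^\epsilon_s)\,ds = O(\epsilon^2)
\qquad\text{and}\qquad
\sup_{z}\E^z L^{G_0}_t = O(\epsilon^2|\ln\epsilon|)
\]
for $t$ of order $|\ln\epsilon|$ (or larger) are simply false. In the spine, which is an $\epsilon$-thick strip, the Neumann heat kernel at time $s\gg\epsilon^2$ is of order $(\epsilon\sqrt{s})^{-1}$, not $O(1)$; integrating $|Q|=\epsilon^2$ against this density gives $\E^z\!\int_0^t\one_Q\asymp \epsilon\sqrt{t}$, and the gate local time is of the same order (the near-equality $\alpha\!\int_0^t\one_Q\approx 2L^{G_0}_t$ is precisely what the proposition is meant to establish). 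Thus both terms are individually of size $\epsilon\sqrt{t}\gg\epsilon^2|\ln\epsilon|$, and only their \emph{difference} is small. Bounding them separately cannot yield \eqref{uosc1}.

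The coupling step has a related problem. Since $\Omega_\epsilon$ is unbounded in~$x$, the $x$-coordinates of two reflected Brownian motions behave like one-dimensional Brownian motions, so the coupling time has infinite mean and $\P(\tau>t)$ decays only like $t^{-1/2}$; achieving $\P(\tau>t)\ll\epsilon^2$ forces $t$ polynomial in $1/\epsilon$, which makes the occupation and local-time terms blow up further. The paper sidesteps both issues. It first uses harmonicity of $u^\epsilon$ outside $\bar Q$ and the maximum principle to reduce to points near $Q$, then works with \emph{hitting times} (to $G_0$ and to the sides/top $A'\cup D'$) rather than a fixed~$t$. The role of your coupling is played by a Doeblin-type contraction: Lemma~\ref{lem:rholower} shows the hitting density on $G_0$ from $A'\cup D'$ is bounded below by $\delta/(\alpha\epsilon^2)$, which contracts the oscillation by a factor $(1-\delta)$. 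The $|\ln\epsilon|$ does not come from choosing $t\sim|\ln\epsilon|$ but from the narrow-escape estimate in Lemma~\ref{lem:ABtoG}: starting on the side of $Q$, the probability of reaching the gate $G_0$ (length $\alpha\epsilon^2$) before escaping to $D'$ (at distance $\epsilon$) is $\sim c/|\ln\epsilon|$, so the expected number of returns, and hence the occupation time in $Q$ before hitting $G_0$, is $O(\epsilon^2|\ln\epsilon|)$.
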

\begin{remark*}
  Existence of a solution to~\eqref{uoscpde}--\eqref{e:uoscBC} can be proved by
  using~\cite[Thm.\ 2.2]{Droniou00} and a standard approximation argument to deal with the unbounded domain.  See also \cite[Thm.\ 2.2.1.3]{Grisvard85}.
\end{remark*}
Throughout the remainder of this proof and the section, we will use the convention that $C > 0$ is a constant that is independent of~$\epsilon$. We apply It\^o's formula to the function $u^\epsilon$ defined in Proposition \ref{p:uosc1} to obtain
  \begin{align*}
    2 \E^z \paren{ u^\epsilon(Z^\epsilon_t) - u^\epsilon(Z^\epsilon_0) }
      & = - \E^z \paren[\Big]{ \alpha\int_0^t \one_{Q}(Z_s^\epsilon) \, ds - 2 L^{G_0}_t 	}\,. \\
& =  \E^z\paren[\Big]{ \alpha\int_0^t    \one_{\{Y_s^\epsilon < 0\}} \one_{\{ |X_s| < \epsilon /2 \}} \,ds  - 2 \int_0^t \one_{\{ |X_s| < \epsilon /2 \}} dL^{G}_s}.
  \end{align*}
The oscillation bound \eqref{uosc1} now implies
\[
\left| \E^z\paren[\Big]{ \alpha\int_0^t    \one_{\{Y_s^\epsilon < 0\}} \one_{\{ |X_s - \epsilon k| < \epsilon /2 \}} \,ds  - 2 \int_0^t \one_{\{ |X_s - \epsilon k| < \epsilon /2 \}} dL^{G}_s} \right| \leq C \epsilon^2 |\log \epsilon|
\]
holds for all $k$ and $x \in \R$. Because of \eqref{R-largek}, we can restrict the sum in \eqref{claim2Gsum} to $k \in \Z$ for which $\epsilon |k| \leq M$ (i.e.\ only $O(\epsilon^{-1})$ terms in the sum).  Therefore,
\begin{multline*}
\sum_{\substack{k \in \Z \\\epsilon |k| \leq M} } \E^z\paren[\Big]{ \alpha\int_0^t    \one_{\{Y_s^\epsilon < 0\}} \one_{\{ |X_s^\epsilon - \epsilon k| < \epsilon /2 \}} \,ds  - 2 \int_0^t \one_{\{ |X_s^\epsilon - \epsilon k| < \epsilon /2 \}} dL^{G}_s} \\
 \leq O(\epsilon |\log (\epsilon)|).
\end{multline*}  
Combining this with the above estimates, we conclude that \eqref{claim2G} holds.
\end{proof}

To complete the proof of Lemma~\ref{l:FCLocalTimeG}, it remains to prove Proposition~\ref{p:uosc1}.
We do this in the next subsection.

\subsection{An Oscillation Estimate for the Neumann Problem (Proposition~\ref{p:uosc1}).}\label{s:uosc1}

The proof of Proposition~\ref{p:uosc1}  involves a ``geometric series'' argument using the probabilistic representation. 
Explicitly, we obtain the desired oscillation estimate by estimating the probabilities of successive visits of $Z^\epsilon$ between two segments.
The key step in the proof involves the so called narrow escape problem (see for instance~\cite{HolcmanSchuss14}), which guarantees that the probability that Brownian motion exists from a given interval on the boundary of a domain vanishes logarithmically with the interval size. 
In our specific scenario, however, we can not directly use the results of~\cite{HolcmanSchuss14} and we prove the required estimates here.

\begin{proof}[Proof of Proposition \ref{p:uosc1}]
  Note first that
  \begin{equation*}
    \int_{\Omega_\epsilon} \paren[\big]{ \alpha\one_Q - \mu^\epsilon } \, dz = 0\,,
  \end{equation*}
  and hence a bounded solution to~\eqref{uoscpde}--\eqref{e:uoscBC} exists.  
Moreover, because the measure $\alpha\one_{Q}(z) - \mu^\epsilon$ is supported in $\bar{Q}$, the function $u^{\epsilon}$ is harmonic in 
$
  \Omega_{\epsilon}
  \setminus \bar{Q}
$.
Thus, by the maximum principle,
\begin{equation*}
  \sup_{\Omega_\epsilon} u^\epsilon \leq \sup_{Q} u^\epsilon\,.
\end{equation*}

\begin{figure}[htb]
  \begin{center}
    \begin{tikzpicture}
      \fill[spinefill] (-5,0) rectangle (5,3);
      \fill[spinefill] (-.5,3) rectangle (.5,5);

      \fill [blue!20!white] (-2,0) rectangle (2,3);
      \draw node at (0,1.5) {Q};

      \draw (-5,0) -- (5,0);
      \draw (-3.5,3) -- (-.5,3);
      \draw (.5,3) -- (3.5,3);

      \draw (-.5,3) -- (-.5,5);
      \draw (.5,3) -- (.5,5);

      \draw[dashed](-.5,3) -- (.5,3);
      \draw[dashed](-.5,4) -- (.5,4);
      \draw node at (0,3.3) {$G_0$};
      \draw node at (0,4.3) {$A'$};

      \draw[dashed](-3,0) -- (-3,3);
      \draw[dashed](3,0) -- (3,3);
      \draw node at (-3.2,1.5) {$D'$};
      \draw node at (3.2,1.5) {$D'$};

      \draw[dashed](-2,0) -- (-2,3);
      \draw[dashed](2,0) -- (2,3);
      \draw node at (-2.2,1.5) {$D$};
      \draw node at (2.2,1.5) {$D$};

      \fill[spinefill] (-4.5,3) rectangle ++(1,2);
      \draw (-3.5,3) -- (-3.5,5);
      \draw (-4.5,3) -- (-4.5,5);

      \fill[spinefill] (3.5,3) rectangle ++(1,2);
      \draw (3.5,3) -- (3.5,5);
      \draw (4.5,3) -- (4.5,5);

      \draw (-5,3) -- (-4.5,3);
      \draw (5,3) -- (4.5,3);

    \end{tikzpicture}
    \caption{Image of one period of $\Omega_\epsilon$.}
  \end{center}
\end{figure}
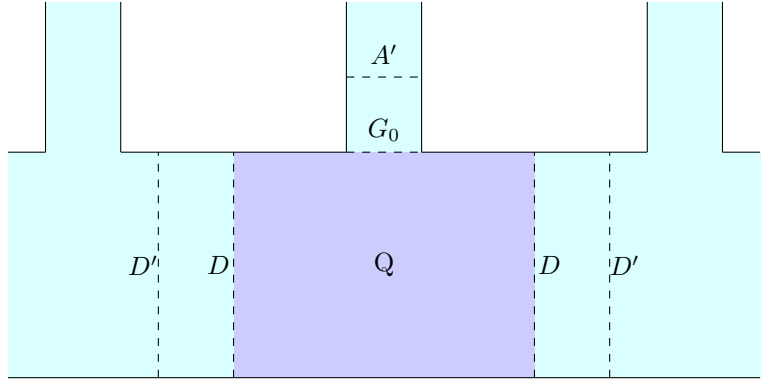
Define $Q' \supseteq Q$ to be the region that enlarges~$Q$ by $\epsilon^2$ on the top, and $\epsilon/4$ on the sides.
Precisely, let
\begin{equation*}
  Q' \defeq
    \Omega_{\epsilon} \bigcap \paren[\Big]{ \brak[\big]{-\frac{3\epsilon}{4}, \frac{3\epsilon}{4}} \times \brak[\big]{-\epsilon,\epsilon^2} }\,.
\end{equation*}
The first step is to estimate the oscillation of $u^\epsilon$ on the top and side portion of $Q'$.
Let~$A'$ and $D'$, defined by
\begin{equation}\label{e:ad}
  A' \defeq
    \brak[\Big]{-\frac{\alpha\epsilon^2}{2}, \frac{\alpha\epsilon^2}{2}}
    \times \set[\big]{ \alpha \epsilon^2 }
  \qquad\text{and}\qquad
  D' \defeq \set[\Big]{\pm \frac{3\epsilon}{4}}
    \times \brak[\big]{-\epsilon,0}
\end{equation}
denotes the top and sides of~$Q'$ respectively.
We aim to show
\begin{align}\label{uosctopbottom}
  \sup_{a,d \in A' \cup D'} \abs{u^\epsilon(a) - u^\epsilon(d)} \leq C \epsilon^2 \abs{\ln \epsilon} \,.
\end{align}


Let $\tau_0$ be the first time at which the process $Z^\epsilon_t$ hits the gate~$G_0$ (defined in~\eqref{e:QG0}).
The stopping time $\tau_0$ is finite almost surely, but has infinite expectation.
We claim that the distribution of~$Z^\epsilon_{\tau_0}$ on~$G$ is bounded below by a constant multiple of the Hausdorff measure, uniformly over all initial points in $A' \cup D'$.

\begin{lemma}\label{lem:rholower}
  For any~$z \in A' \cup D'$, let $\rho(z, \cdot)$, defined by
  \begin{equation*}
    \rho(z, r) = \P^z (Z^\epsilon_{\tau_0} \in dr )\,,
  \end{equation*}
  denote the density of the random variable $Z^\epsilon_{\tau_0}$ on~$G_0$.
  Then, there exists $\delta > 0$ such that
  \begin{equation}\label{rholowerS}
    \rho(z,r) \geq \frac{\delta}{ \alpha \epsilon^2}\,,
  \end{equation}
  for all $z \in A'\cup D'$  and $r \in G_0$.
\end{lemma}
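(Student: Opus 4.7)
The plan is to prove the lower bound separately for starting points $z \in A'$ (inside the tooth, just above $G_0$) and for $z \in D'$ (on the lateral boundary of $Q'$, in the spine), exploiting the different local geometries of $\Omega_\epsilon$ near these sets.

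For $z \in A'$, rescale coordinates by $\alpha\epsilon^2$. The tooth above $G_0$ becomes the strip $S = (-\tfrac{1}{2}, \tfrac{1}{2}) \times (0, h_0/(\alpha\epsilon^2))$ with Neumann sides (and reflecting top if $h_0 < \infty$); the rescaled $A'$ is the segment $(-\tfrac{1}{2}, \tfrac{1}{2}) \times \{1\}$, and the rescaled $G_0$ is $(-\tfrac{1}{2}, \tfrac{1}{2}) \times \{0\}$. A trajectory starting in the rescaled $A'$ must hit $G_0$ before leaving $S$, so the rescaled first-hit density $\tilde\rho$ equals the Poisson kernel of $S$, which is the sum of half-plane Poisson kernels over the images of the starting point reflected across the Neumann walls. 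All images contribute positively; retaining only the $n=0$ image already gives
\begin{equation*}
  \tilde\rho((\tilde x, 1), (\tilde r, 0)) \geq \frac{1/\pi}{(\tilde x - \tilde r)^2 + 1} \geq \frac{1}{2\pi}, \qquad \tilde x, \tilde r \in (-\tfrac{1}{2}, \tfrac{1}{2}),
\end{equation*}
and undoing the rescaling yields $\rho(z, r) \geq 1/(2\pi\alpha\epsilon^2)$.

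For $z \in D'$, the starting point lies in the spine at horizontal distance $\sim \epsilon$ from $G_0$, much larger than $|G_0| = \alpha\epsilon^2$. Since $Z^\epsilon$ is recurrent in $\Omega_\epsilon$ and hits $G_0$ almost surely, $\int_{G_0}\rho(z,r)\,dr = 1$, so the average density on $G_0$ equals $1/(\alpha\epsilon^2)$; the lower bound then reduces to approximate uniformity of $\rho(z,\cdot)$. I would establish this by combining a Harnack chain in the spine with a boundary Harnack comparison for the pair of positive harmonic functions $\rho(\cdot, r_1)$ and $\rho(\cdot, r_2)$, which both vanish on $G_0$ away from their respective poles. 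Such a comparison would show that the ratio $\rho(z, r_1)/\rho(z, r_2)$ is bounded above and below by universal constants uniformly in $\epsilon$ and in $r_1, r_2 \in G_0$, which combined with the normalization gives $\rho(z, r) \geq \delta/(\alpha\epsilon^2)$.

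The main obstacle will be obtaining this uniform boundary-Harnack-type bound despite the degeneracy as $\epsilon \to 0$: the Dirichlet portion $G_0$ shrinks relative to the surrounding Neumann boundary, so any chain of standard interior Harnack balls connecting $D'$ to a point adjacent to $G_0$ has logarithmic length and would lose an unacceptable power of $\epsilon$. To avoid this loss, I would work in coordinates rescaled by $\alpha\epsilon^2$, so that $G_0$ has unit length; the rescaled spine becomes a long rectangle of size $\sim 1/(\alpha\epsilon)$ with a unit-length Dirichlet window on its top. In these coordinates, standard boundary Harnack estimates for mixed Dirichlet--Neumann data apply near the window with constants depending only on the fixed local geometry of $G_0$ and its immediate complement, and these scale-invariant constants propagate back to give the desired lower bound in the original coordinates.
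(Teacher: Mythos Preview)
Your treatment of the case $z\in A'$ is correct and in fact cleaner than the paper's, which just remarks that this case is ``similar but less complicated'' without giving details. The Poisson kernel lower bound via images is exactly the right computation.

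The case $z\in D'$, however, has a genuine gap. You correctly diagnose that a naive Harnack chain incurs a logarithmic loss, but your proposed remedy does not cure it. After rescaling by $\alpha\epsilon^2$, the window $G_0$ has unit size, but $D'$ sits at distance of order $1/(\alpha\epsilon)$ from it. So ``boundary Harnack near the window with constants depending only on the fixed local geometry'' controls the ratio $\rho(\cdot,r_1)/\rho(\cdot,r_2)$ only in an $O(1)$-neighborhood of $G_0$ in rescaled coordinates---not at $D'$. The step you call ``propagate back'' is exactly the step where the loss reappears: to carry the ratio bound from that $O(1)$-neighborhood out to $D'$ you still need a Harnack chain through the rescaled spine, whose height and the distance to $D'$ are both of order $1/\epsilon$, while the chain must avoid the unit-size window $G_0$. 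Balls near $G_0$ must have radius $O(1)$, and doubling radii out to $O(1/\epsilon)$ costs $O(|\log\epsilon|)$ steps, so the accumulated Harnack constant is again $\epsilon$-dependent. In short, the rescaling relocates the difficulty but does not remove it; nothing in your sketch pins down why the propagation step should be uniform in $\epsilon$.

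The paper avoids Harnack arguments entirely. By the strong Markov property it suffices to start from $\partial B(0,\alpha\epsilon^2)$. From there, a quantitative large-deviations lower bound (Lemma~\ref{lem:steering}) shows that, with probability bounded below uniformly in $\epsilon$ and in the target interval $[r_0-\kappa,r_0+\kappa]\subset G_0$, the process can be steered to hit the short vertical segment $\ell=\{r_0\}\times[-\epsilon^2,0)$ before touching $G_0$. Once on $\ell$, a circle centered at $z_0\in\ell$ passing through the endpoints $(r_0\pm\kappa,0)$ has uniform exit distribution, so the process exits through the arc above $G_0$ (hence hits $[r_0-\kappa,r_0+\kappa]$) with probability at least of order $2\kappa/(\alpha\epsilon^2)$. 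This gives the density lower bound directly, with no chaining and no boundary Harnack.
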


Momentarily postponing the proof of this lemma, we note that for any $a, d \in A' \cup D'$, we have
\begin{align*}
  \MoveEqLeft
  \E^{a} u^\epsilon(Z^\epsilon_{\tau_0}) - \E^{d} u^\epsilon(Z^\epsilon_{\tau_0})  = \int_{G_0} \rho(a,r) u^\epsilon(r) \,dr - \int_{G_0} \rho(d,r) u^\epsilon(r) \,dr  \\
  & =  \int_{G_0} \paren[\Big]{ \rho(a,r) - \frac{\delta}{\alpha \epsilon^{2}} } u^\epsilon(r) \,dr
  - \int_{G_0} \paren[\Big]{ \rho(d,r) - \frac{\delta}{\alpha \epsilon^{2}} } u^\epsilon(r) \,dr  \\
  & \leq \paren{1 - \delta} \paren[\Big]{\sup_{G_0} u^\epsilon - \inf_{G_0} u^\epsilon }
    \leq \paren{1 - \delta} \paren[\Big]{\sup_{r_1, r_2 \in G_0} \abs{u^\epsilon(r_1) - u^\epsilon(r_2)} }\,.
\end{align*}
To obtain the second last inequality above we used the fact that
\begin{equation}
  \rho(z,r) - \frac{\delta}{ \alpha \epsilon^{2}} \geq 0\,,
\end{equation}
which is guaranteed by Lemma~\ref{lem:rholower}.

Now by It\^o's formula,
\begin{align}\label{uaubbound}
  \nonumber
  u^\epsilon(a) - u^\epsilon(d) & = \E^{a} u^\epsilon(Z^\epsilon_{\tau_0}) - \E^{d} u^\epsilon(Z^\epsilon_{\tau_0})  - \frac{1}{2} \E^a \paren[\Big]{
    2 L^{G_0^+}_{\tau_0}
    - \alpha\int_0^{\tau_0} \one_{Q}(Z^\epsilon_s) \, ds } \\
  \nonumber
  & \qquad + \frac{1}{2} \E^d \paren[\Big]{
    2 L^{G_0^+}_{\tau_0}
    - \alpha\int_0^{\tau_0} \one_{Q}(Z^\epsilon_s) \, ds } \\
  \nonumber
   \leq ( 1 - \delta)& \sup_{r_1,r_2 \in G_0} |u^\epsilon(r_1) - u^\epsilon(r_2)|   - \frac{1}{2} \E^a \paren[\Big]{
    2 L^{G_0^+}_{\tau_0}
    -\alpha \int_0^{\tau_0} \one_{Q}(Z^\epsilon_s) \, ds } \\
  & \qquad + \frac{1}{2} \E^d \paren[\Big]{
    2 L^{G_0^+}_{\tau_0}
    - \alpha\int_0^{\tau_0} \one_{Q}(Z^\epsilon_s) \, ds } 
\end{align}
Note that by definition of~$\tau_0$ we have we have $L_{\tau_0}^{G_0^+} = 0$ for all $a,d \in A' \cup D'$.
Also, if $a \in A'$, then $Y^\epsilon_s > 0$ for all $s \in [0,\tau_0]$ with probability one.
Hence
\begin{multline}\label{e:uaubbound2}
  \sup_{a, d \in A' \cup D'} \Bigl\lvert
    - \frac{1}{2} \E^a \paren[\Big]{
      2 L^{G_0^+}_{\tau_0}
      - \alpha\int_0^{\tau_0} \one_{Q}(Z^\epsilon_s) \, ds
    }
    \\
    + \frac{1}{2} \E^d \paren[\Big]{
      2 L^{G_0^+}_{\tau_0}
      - \alpha\int_0^{\tau_0} \one_{ Q }(Z^\epsilon_s) \, ds
    }
  \Bigr\rvert
  \leq \alpha\sup_{d \in D'} \E^d \int_0^{\tau_0} \one_Q(Z^\epsilon_s) \,ds\,.
\end{multline}

We claim that the term on the right is bounded by~$C \epsilon^2 \abs{\ln \epsilon}$.
To avoid distracting from the main proof, we single this out as a lemma and postpone the proof.

\begin{lemma}\label{lem:ABtoG}
  With the above notation,
  \begin{equation*}
    \sup_{d \in  D'}  \E^d \int_0^{\tau_{0}} \one_{Q}(Z^\epsilon_s) \, ds \leq C \epsilon^2 \abs{\ln \epsilon} \,.
  \end{equation*}
\end{lemma}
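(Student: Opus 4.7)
The plan is an excursion decomposition combined with a narrow-escape estimate at the gate $G_0$. Let $A \defeq \{\pm\epsilon/2\} \times [-\epsilon,0]$ denote the interior interface separating $Q$ from the rest of the spine, and define $\tau^{(1)} \defeq \inf\{t \geq 0 \st Z^\epsilon_t \in A\}$ and, for $k \geq 1$,
\[
\sigma^{(k)} \defeq \inf\{t \geq \tau^{(k)} \st Z^\epsilon_t \in G_0 \cup D'\},
\qquad
\tau^{(k+1)} \defeq \inf\{t \geq \sigma^{(k)} \st Z^\epsilon_t \in A\}\,.
\]
Since the process can re-enter $Q$ after hitting $D'$ only by first crossing $A$ again, and $Q$ is disjoint from the region outside $A$ (on the $D'$ side) in $\Omega_\epsilon$,
\[
\int_0^{\tau_0} \one_Q(Z^\epsilon_s)\,ds
  \leq \sum_{k \geq 1} \one_{\{\tau^{(k)} \leq \tau_0\}} (\sigma^{(k)} - \tau^{(k)})\,.
\]

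The first ingredient is the bound $T \defeq \sup_{z \in A} \E^z[\sigma^{(1)}] \leq C \epsilon^2$ on the expected excursion length. Between $\tau^{(1)}$ and $\sigma^{(1)}$ the process is confined to the bounded rectangle $[-3\epsilon/4, 3\epsilon/4] \times [-\epsilon,0]$ of diameter $O(\epsilon)$, and this bound then follows from It\^o's formula applied to a quadratic super\-solution of $-\tfrac12 \Delta \phi = 1$ satisfying $\partial_\nu \phi \leq 0$ on the reflecting part of the boundary and $\phi \geq 0$ on $G_0 \cup D'$. The second, and harder, ingredient is the narrow-escape lower bound
\begin{equation}\label{e:plan-p}
p \defeq \inf_{z \in A} \P^z\paren[\big]{Z^\epsilon_{\sigma^{(1)}} \in G_0} \geq \frac{c}{\abs{\ln \epsilon}}\,.
\end{equation}
Granting~\eqref{e:plan-p}, the strong Markov property gives $\P^d(\tau^{(k)} \leq \tau_0) \leq (1-p)^{k-1}$, and summing the geometric series yields
\[
\E^d \int_0^{\tau_0} \one_Q(Z^\epsilon_s)\,ds \leq \frac{T}{p} \leq C \epsilon^2 \abs{\ln \epsilon}\,,
\]
as claimed.

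The main obstacle is~\eqref{e:plan-p}. The function $h(z) \defeq \P^z(Z^\epsilon_{\sigma^{(1)}} \in G_0)$ is harmonic in the rectangle away from $G_0 \cup D'$, equals $1$ on $G_0$ and $0$ on $D'$, and has Neumann data on the reflecting portions of $\partial\Omega_\epsilon$. To bound $h$ from below on $A$, I would construct an explicit sub\-solution $\psi$ by first reflecting the rectangle across $y = -\epsilon$ and across the reflecting segments of $\{y = 0, \alpha\epsilon^2/2 < \abs{x} \leq 3\epsilon/4\}$ to remove those Neumann conditions, and then comparing with the two-dimensional logarithmic fundamental solution centered on $G_0$. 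The quantitative content is the classical 2D narrow-escape principle: the harmonic measure of a segment of length $\asymp \epsilon^2$ viewed from a point at distance $\asymp \epsilon$ in a domain of linear size $\asymp \epsilon$ is of order $1/\abs{\ln \epsilon}$. Concretely, a cut-off of the log-profile $1 - \log(\abs{z}/r_0)/\log(R/r_0)$ with $r_0 \asymp \alpha \epsilon^2$ and $R \asymp \epsilon$, deformed (for instance via a conformal map onto an annulus) to match the rectangular geometry so that its zero set contains $D'$ and its level set $\{\psi = 1\}$ contains $G_0$, is sub\-harmonic and satisfies $\psi(z) \geq c/\abs{\ln \epsilon}$ for every $z \in A$ because $\log R - \log \abs{z} = O(1)$ whereas $\log R - \log r_0 \asymp \abs{\ln \epsilon}$. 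The maximum principle then yields $\psi \leq h$ on the rectangle, giving~\eqref{e:plan-p}.
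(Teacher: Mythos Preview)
Your excursion decomposition and geometric-series reduction are essentially identical to the paper's: the paper introduces the same interface (called $D$ there, your $A$), alternates stopping times between $D$ and $G_0\cup D'$, bounds each excursion by $O(\epsilon^2)$ via the one-dimensional exit time of $X^\epsilon$ from an interval of length $3\epsilon/2$, and reduces everything to the same narrow-escape lower bound $\inf_{z\in A}\P^z(Z^\epsilon_{\sigma^{(1)}}\in G_0)\geq c/\abs{\ln\epsilon}$.

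The only substantive difference is how that narrow-escape bound is established. You propose a PDE subsolution built from reflections across the Neumann pieces of the boundary together with a logarithmic profile (or a conformal map). The paper instead runs a purely probabilistic three-stage argument: (i) from $A$ the process reaches $\partial B(0,\epsilon/4)$ before $D'$ with probability bounded below by a constant (Brownian scaling); (ii) from $\partial B(0,\epsilon/4)$ it reaches $\partial B(0,\alpha\epsilon^2)$ before $\partial B(0,\epsilon/2)$ with probability exactly $\ln(1/2)/\ln(2\alpha\epsilon)\asymp 1/\abs{\ln\epsilon}$, using the explicit radial harmonic function $z\mapsto \ln(2|z|/\epsilon)/\ln(2\alpha\epsilon)$ in the annulus; (iii) from $\partial B(0,\alpha\epsilon^2)$ it hits $G_0$ before exiting $B(0,2\alpha\epsilon^2)$ with probability bounded below by a constant (scaling again). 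This route sidesteps the awkwardness of your reflection step---reflecting across only the Neumann portion of $\{y=0\}$ while $G_0$ carries Dirichlet data is not a clean operation, and the conformal-map alternative you mention would itself require careful control of the map near the mixed-boundary corners. Your approach can be made to work, but the paper's annulus computation sandwiched between two scale-invariant estimates is shorter and avoids those geometric technicalities.
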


Using Lemma~\ref{lem:ABtoG} and~\eqref{e:uaubbound2} in~\eqref{uaubbound} we conclude
\begin{align} \label{uaubbound2}
\sup_{a,d \in  A' \cup D'} \abs{ u^\epsilon(a) - u^\epsilon(d)} \leq ( 1 - \delta) \sup_{r_1,r_2 \in G_0} \abs{u^\epsilon(r_1) - u^\epsilon(r_2)}   + C \epsilon^2 \abs{\ln \epsilon} \,.
\end{align}
To finish proving~\eqref{uosctopbottom}, we will now have to control the oscillation of~$u^\epsilon$ on $G_0$ in terms of the oscillation of~$u^\epsilon$ on $A' \cup D'$.

For this, given $Z^\epsilon_0 \in G_0$, let $\tau'_0$ be the first time that $Z^\epsilon_t$ hits $A' \cup D'$.
By It\^o's formula again, we have for all $r_1,r_2 \in G_0$:
\begin{multline} \label{uxuySbound}
u^\epsilon(r_1) - u^\epsilon(r_2)
  \leq \sup_{a',d' \in A' \cup D'} (u^\epsilon(a') - u^\epsilon(d'))
  \\
  -\frac{1}{2} \E^{r_1} \paren[\Big]{
	  2 L^{G_0}_{\tau'_0}
	  - \alpha\int_0^{\tau'_0} \one_{Q} \, ds }
  + \frac{1}{2} \E^{r_2} \paren[\Big]{
	  2 L^{G_0}_{\tau'_0}
	  - \alpha\int_0^{\tau'_0} \one_{Q } \, ds }\,.
\end{multline}
We claim that the last two terms above are $O(\epsilon^2)$.
For clarity of presentation we single this out as a Lemma and postpone the proof.

\begin{lemma}\label{lem:GtoAB}
  With the above notation
  \begin{equation*}
    \sup_{r \in G_0} \abs[\Big]{
      \E^r \paren[\Big]{
	2 L^{G_0}_{\tau'_0} - \alpha\int_0^{\tau'_0} \one_{Q}(Z^\epsilon_s) \, ds }
    }
    \leq C \epsilon^2\,.
  \end{equation*}
\end{lemma}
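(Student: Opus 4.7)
The plan is to bound the two summands $\E^r[L^{G_0}_{\tau'_0}]$ and $\alpha\E^r\int_0^{\tau'_0}\one_{Q}(Z^\epsilon_s)\,ds$ separately, each by $O(\epsilon^2)$, and then combine them via the triangle inequality. The key geometric observation is that within $Q'$ the only gate reachable by $Z^\epsilon$ before the stopping time $\tau'_0$ is $G_0$ itself, since the neighbouring gates $G_{\pm 1}$ are centred at $x = \pm \epsilon$, outside the $x$-range $[-3\epsilon/4,3\epsilon/4]$ of the spine portion of $Q'$.

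For the local time, I would apply Tanaka's formula to $\phi(y) = y^+$ composed with the projected process $Y^\epsilon$ to obtain
\begin{equation*}
  (Y^\epsilon_t)^+
  = (Y^\epsilon_0)^+
  + \int_0^t \one_{\{Y^\epsilon_s > 0\}}\, dY^\epsilon_s
  + L^{Y^\epsilon}_t(0),
\end{equation*}
where $L^{Y^\epsilon}_t(0)$ is the right local time of $Y^\epsilon$ at $0$ in the convention of the paper. Writing $Y^\epsilon$ as a Brownian motion plus the reflection terms at $y=1$, $y=-\epsilon$, and at the spine top $\{y=0,\,x\notin G\}$, the factor $\one_{\{Y^\epsilon_s > 0\}}$ kills both the bottom and spine-top reflections (they live on $\{Y^\epsilon \leq 0\}$), while the reflection at $y=1$ never occurs before $\tau'_0$ since $Y^\epsilon \leq \alpha\epsilon^2 < 1$ throughout $Q'$. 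Taking expectations with $Y^\epsilon_0 = 0$ and using the geometric observation above, which identifies $L^{Y^\epsilon}_{\tau'_0}(0)$ with $L^{G_0}_{\tau'_0}$, one obtains
\begin{equation*}
  \E^r[L^{G_0}_{\tau'_0}] = \E^r\brak[\Big]{(Y^\epsilon_{\tau'_0})^+} \leq \alpha\epsilon^2,
\end{equation*}
since $Y^\epsilon_{\tau'_0}$ is positive only when the process exits through $A'$ at height $\alpha\epsilon^2$.

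For the occupation time, I would prove the stronger bound $\E^r[\tau'_0]\leq (3\epsilon/4)^2$ using the maximum principle. The function $\Psi(x,y)\defeq(3\epsilon/4)^2-x^2$ is a supersolution of $-\Delta u = 2$ on $Q'$: indeed $-\Delta\Psi=2$, $\Psi\geq 0$ on the Dirichlet portion $A'\cup D'$, and the inward normal derivative of $\Psi$ vanishes on the horizontal Neumann parts of $\partial Q' \cap \partial\Omega_\epsilon$ (where $\partial_y\Psi=0$) and equals $\alpha\epsilon^2>0$ on the vertical tooth walls $x = \pm \alpha\epsilon^2/2$. Hence the comparison principle for the mixed Dirichlet-Neumann problem gives $\tilde\psi\leq\Psi$ on $Q'$, where $\tilde\psi$ is the unique solution of $-\Delta\tilde\psi=2$ in $Q'$ with zero Dirichlet data on $A'\cup D'$ and zero Neumann data on the rest of $\partial Q'$. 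Since $\tilde\psi(r)=\E^r[\tau'_0]$ by Dynkin's formula, we conclude
\begin{equation*}
  \E^r\int_0^{\tau'_0}\one_{Q}(Z^\epsilon_s)\,ds \leq \E^r[\tau'_0] \leq \Psi(r) \leq (3\epsilon/4)^2.
\end{equation*}

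Combining the two bounds via the triangle inequality yields
\begin{equation*}
  \abs[\Big]{\E^r\paren[\Big]{2L^{G_0}_{\tau'_0} - \alpha\int_0^{\tau'_0}\one_{Q}(Z^\epsilon_s)\,ds}}
  \leq 2\alpha\epsilon^2 + \alpha(3\epsilon/4)^2 = O(\epsilon^2),
\end{equation*}
uniformly in $r \in G_0$, which proves the lemma. The main subtlety I expect lies in the first step: carefully justifying that the one-dimensional right local time $L^{Y^\epsilon}_t(0)$ appearing in Tanaka's formula coincides (for $t \leq \tau'_0$) with the gate local time $L^{G_0}_t$, and in particular is not inflated by the spine-top reflections at $\{y=0,\,x\notin G\}$. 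This follows from the one-sided nature of the local time---it counts the occupation of $\{0 \leq Y^\epsilon \leq \delta\}$ as $\delta\to 0$---together with the fact that in $Q'$ the process $Y^\epsilon$ is strictly positive only while $X^\epsilon$ lies in the single gate $G_0$.
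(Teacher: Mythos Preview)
Your local-time bound is correct and essentially the paper's argument in Tanaka form: the paper applies It\^o's formula to the test function $w(x,y)=\alpha\epsilon^2-y$ on $[0,\alpha\epsilon^2]$ (constant elsewhere), which is the same computation as applying Tanaka to $y^+$ and reading off $\E^r[(Y^\epsilon_{\tau'_0})^+]\le\alpha\epsilon^2$. Your identification of $L^{Y^\epsilon}(0)$ with $L^{G_0}$ before $\tau'_0$ is also fine.

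The occupation-time bound, however, has a genuine gap. For the comparison principle to yield $\tilde\psi\le\Psi$ one needs the \emph{outward} normal derivative $\partial_\nu\Psi\ge 0$ on the Neumann portion of $\partial Q'$, equivalently $\nabla\Psi\cdot\hat n_{\mathrm{in}}\le 0$ there. On the tooth walls $x=\pm\alpha\epsilon^2/2$ you correctly compute the inward normal derivative to be $+\alpha\epsilon^2$, but this is the \emph{wrong} sign for a supersolution: the reflection there pushes the process toward $x=0$, where $\Psi$ is larger, so It\^o's formula gives
\[
\E^r[\tau'_0]=\Psi(r)-\E^r[\Psi(Z^\epsilon_{\tau'_0})]+\alpha\epsilon^2\,\E^r\bigl[L^{\text{tooth walls}}_{\tau'_0}\bigr],
\]
and the last term is nonnegative and unbounded by your argument. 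Physically, while the process is in the tooth stub the $x$-coordinate is confined to $|x|<\alpha\epsilon^2/2$ and makes no progress toward $D'$, so $\Psi(x,y)=(3\epsilon/4)^2-x^2$ cannot dominate the exit time without controlling the tooth-wall local time separately.

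The paper sidesteps this by not attempting to bound $\E^r[\tau'_0]$ at all: it bounds only $\E^r\!\int_0^{\tau'_0}\one_Q$, and does so with a test function depending only on $y$ (namely $v(y)=\tfrac12(\epsilon^2-y^2)$ on the spine, constant in the tooth), for which the tooth-wall Neumann condition is automatic. Since $v$ vanishes only at the bottom $H=\{y=-\epsilon\}$ rather than at $A'\cup D'$, the paper then needs an additional geometric-series argument counting returns to $H'$ before escaping through $D'$. Your barrier idea would work if you could replace $\Psi$ by a function that is $x$-independent in the tooth while still vanishing on $A'\cup D'$; constructing such a $C^1$ function across $G_0$ is exactly the difficulty the paper's more elaborate argument is designed to avoid.
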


Using~\eqref{uxuySbound} and Lemma \ref{lem:GtoAB}, we see
\begin{align}
\sup_{r_1,r_2 \in G_0} \abs{u^\epsilon(r_1) - u^\epsilon(r_2)}
  \leq \sup_{a,d \in A' \cup D'} \abs{u^\epsilon(a) - u^\epsilon(d)}
    + C \epsilon^2\,.
\end{align}
Combining this with \eqref{uaubbound2}, we obtain
\begin{equation*}
  \sup_{a,d  \in A' \cup D'} \abs{u^\epsilon(a) - u^\epsilon(d)}
    \leq (1 - \delta) \paren[\Big]{ \sup_{a,d \in A' \cup D'} \abs{u^\epsilon(a) - u^\epsilon(d)} +  C \epsilon^2 \abs{\ln \epsilon}}  + C \epsilon^2\,.
\end{equation*}
and hence
\begin{align}
  \label{oscboundary}
  \sup_{a,d  \in A' \cup D'}
    \abs{u^\epsilon(a) - u^\epsilon(d)}
    \leq C \paren[\Big]{ \frac{1 - \delta}{\delta} } \epsilon^2 \abs{\ln \epsilon} + \frac{C}{\delta} \epsilon^2\,.
\end{align}
This proves~\eqref{uosctopbottom} as desired.

Now we turn this into an oscillation bound on $u^\epsilon$ over the interior.
Observe that for any $z \in \Omega_\epsilon$, 
\begin{align}
u^\epsilon(z) = \E^z[ u^\epsilon(Z^\epsilon_{\tau'_0})] + \frac{1}{2} \E^z \paren[\Big]{
	  2 L^{Y^\epsilon}_{\tau'_0} (0^+)
	  - \alpha\int_0^{\tau'_0} \one_{\set{Y^\epsilon_s \leq 0} } \, ds }
\end{align}
These last terms can be estimated with the same argument used in Lemma~\ref{lem:GtoAB}, leading to
\[
\sup_{z \in \Omega_\epsilon} \abs{u^\epsilon(z) - \E^z u^\epsilon(Z^\epsilon_{\tau'_0})} \leq C \epsilon^2 \,.
\]
The combination of this and \eqref{oscboundary} implies that 
\begin{equation*}
  \sup_{z_1,z_2 \in \Omega_\epsilon} \abs{u^\epsilon(z_1) - u^\epsilon(z_2)}
    \leq \sup_{z_1,z_2 \in \Omega_\epsilon}
      \abs{\E^{z_1}  u^\epsilon(Z^\epsilon_{\tau'_0})
	- \E^{z_2} u^\epsilon(Z^\epsilon_{\tau'_0})}
      + C \epsilon^2
    \leq C \epsilon^2 (\abs{\ln \epsilon} + 1)\,.
\end{equation*}
This implies \eqref{uosc1}, concluding the proof.
\end{proof}

For the proof of Lemma~\ref{lem:rholower} we will use a standard large deviation estimate for Brownian motion.
We state the result we need below.

\begin{lemma}\label{lem:steering}
Let $W_t$ be a standard Brownian motion in $\R^d$. Let $\gamma \in C([0,T];\R^d)$ be absolutely continuous with $S(\gamma) = \int_0^T |\gamma'(s)|^2 \,ds < \infty$. Then
\[
\P\left( \sup_{t \in [0,T]} |W(t) - \gamma(t)| \leq \delta \right)  \geq \frac{\P(K)}{2}  e^{- \frac{1}{2} S(\gamma) -  \sqrt{2S(\gamma)/\P(K)} } 
\]
where $K$ is the event $\{ \sup_{t \in [0,T]} |W(t)| \leq \delta \}$. 
\end{lemma}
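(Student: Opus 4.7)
The plan is to reduce the problem to the small-ball probability $\P(K)$ via a Cameron--Martin shift, then control the Radon--Nikodym density by Jensen's inequality together with Cauchy--Schwarz. First, I would observe that without loss of generality we may take $\gamma(0) = 0$ (the general case follows by replacing $\gamma$ with $\gamma - \gamma(0)$ and modifying $\delta$; otherwise the bound is vacuous). Applying the Cameron--Martin formula to the shift $h = -\gamma$ and the functional $F(\omega) = \one_{\{\sup_t |\omega(t)| \le \delta\}}$ rewrites the quantity of interest as
\[
  \P(A) \;=\; \E\bigl[\one_K \exp(-N - \tfrac{1}{2}S(\gamma))\bigr],
  \qquad
  N := \int_0^T \gamma'(s) \cdot dW_s,
\]
where $A = \{\sup_{t \in [0,T]}|W(t) - \gamma(t)| \le \delta\}$. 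Since $\gamma'$ is deterministic and in $L^2$, $N$ is a centered real Gaussian with variance $S(\gamma)$.

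The next step is to restrict to the half of $K$ on which the exponent is favorable. Define $K_- := K \cap \{N \le 0\}$. The map $W \mapsto -W$ preserves the law of $W$, leaves $K$ invariant, and sends $N$ to $-N$; hence $\P(K \cap \{N > 0\}) = \P(K \cap \{N < 0\})$, which immediately gives $\P(K_-) \ge \P(K)/2$.

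Finally, I would apply Jensen's inequality under the conditional measure $\P(\,\cdot \mid K_-)$ to the convex function $e^x$:
\[
  \P(A) \;\ge\; \E\bigl[\one_{K_-} e^{-N - S(\gamma)/2}\bigr]
  \;\ge\; \P(K_-)\,\exp\!\bigl(-\tfrac{1}{2} S(\gamma) \,-\, \E[N \mid K_-]\bigr).
\]
Cauchy--Schwarz yields $|\E[N \one_{K_-}]| \le \sqrt{\E[N^2]\,\P(K_-)} = \sqrt{S(\gamma)\,\P(K_-)}$, so
\[
  |\E[N \mid K_-]| \;\le\; \sqrt{S(\gamma)/\P(K_-)} \;\le\; \sqrt{2 S(\gamma)/\P(K)}.
\]
Combining these three ingredients produces the claimed inequality. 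The only delicate point is bookkeeping the sign in the Cameron--Martin density and justifying that $N \circ (-W) = -N$, which is immediate from the Wiener-integral definition on simple integrands and extends by $L^2$-density; everything else is elementary. (I note in passing that on $K_-$ one has $-N \ge 0$, so the Jensen step could be avoided and one would even obtain the stronger bound $\tfrac{\P(K)}{2} e^{-S(\gamma)/2}$; the form stated in the lemma, however, already suffices for its subsequent use.)
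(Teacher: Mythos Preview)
Your proof is correct and shares the paper's key idea (a Cameron--Martin/Girsanov shift reducing the event $A$ to the small-ball event $K$ weighted by $e^{-N - S(\gamma)/2}$ with $N = \int_0^T \gamma'\cdot dW$), but the two arguments diverge in how they control the stochastic integral $N$. The paper bounds the bad tail $\{N \ge \alpha\sqrt{S(\gamma)}\}$ via Chebyshev and chooses $\alpha = \sqrt{2/\P(K)}$ so that this tail has mass at most $\P(K)/2$; on the remaining half of $K$ the exponent is bounded below by $-\alpha\sqrt{S(\gamma)}$. You instead exploit the reflection symmetry $W \mapsto -W$ to isolate $K_- = K \cap \{N \le 0\}$ with $\P(K_-) \ge \P(K)/2$, and then apply Jensen plus Cauchy--Schwarz. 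As you note parenthetically, the Jensen/Cauchy--Schwarz step is in fact superfluous: on $K_-$ one has $e^{-N} \ge 1$ outright, so your route actually yields the sharper bound $\P(A) \ge \tfrac{1}{2}\P(K)\,e^{-S(\gamma)/2}$, which dominates the stated inequality. The paper's Chebyshev approach is slightly more robust (it does not rely on a symmetry of the functional), but for this lemma your symmetry argument is cleaner and strictly stronger. One small quibble: the assumption $\gamma(0)=0$ is not ``otherwise vacuous'' but rather necessary for the bound to hold at all (if $|\gamma(0)|>\delta$ the left side is $0$ while the right side is positive); the paper's proof tacitly makes the same assumption, consistent with how the lemma is applied.
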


The proof of Lemma~\ref{lem:steering} is standard -- it follows from a change of measure, as in the proof of Theorem 3.2.1 of \cite{FreidlinWentzell12}, for example. For convenience we provide a proof at the end of this section, and prove Lemmas~\ref{lem:rholower}, \ref{lem:ABtoG} and~\ref{lem:GtoAB} next.
\begin{proof}[Proof of Lemma \ref{lem:rholower}]
We need to show that for an interval $[r_1,r_2] \subset [-\alpha\epsilon^2/2, \alpha\epsilon^2/2]$,
\[
\inf_{z \in A' \cup D'} \P^z\left( Z^\epsilon_{\tau_0} \in [r_1,r_2] \times \{0\} \right) \geq C \frac{|r_2 - r_1|}{\alpha \epsilon^2}\,.
\]
Suppose $z \in D'$ (the case $z \in A'$ is similar but less complicated by the domain geometry).  In order to hit $G_0$, the process must first hit the boundary of $B(0,\alpha\epsilon^2)$ which is a ball of radius $\alpha\epsilon^2$, centered at the origin $(0,0)$, since $G_0 \subset B(0,\alpha\epsilon^2)$. So, by the strong  Markov property, it suffices to show that
\[
\inf_{z \in B(0,\epsilon^2) } \P^z\left( Z^\epsilon_{\tau_0} \in [r_1,r_2] \times \{0\} \right) \geq C \frac{|r_2 - r_1|}{\alpha \epsilon^2}.
\]

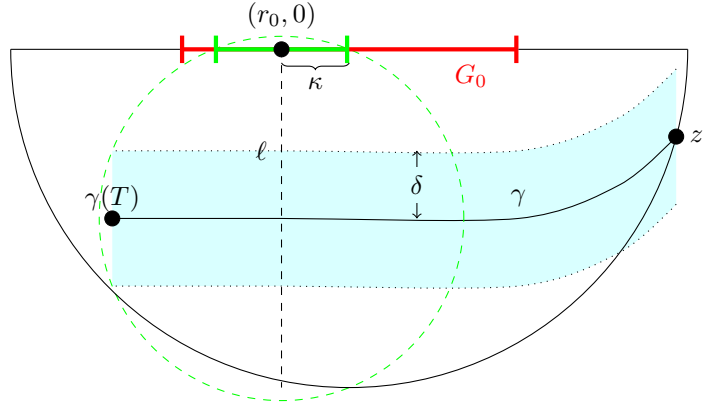
\begin{figure}[hbt]
  \begin{center}
    \begin{tikzpicture}[scale=.9]
      \fill[spinefill]
        plot [smooth, tension=.5]
          coordinates { (4.83, 2.71)  (4, 2) (2.5,1.5) (-1,1.5) (-3.5,1.5) }
        -- (-3.5,-.5)
        -- plot [smooth, tension=.5]
          coordinates {(-1,-.5) (2.5,-.5) (4, 0) (4.83, .71) }
        -- cycle;
      \draw  plot [smooth, tension=.5] coordinates { (4.83, 1.71)  (4, 1) (2.5,.5) (-1,.5) (-3.5,.5)};
      \draw[dotted]  plot [smooth, tension=.5] coordinates { (4.83, 2.71)  (4, 2) (2.5,1.5) (-1,1.5) (-3.5,1.5)};
      \draw[dotted]  plot [smooth, tension=.5] coordinates { (4.83, .71)  (4, 0) (2.5,-.5) (-1,-.5) (-3.5,-.5)};

      \draw[dashed, green] (-1, .5) circle (2.692582);
      \node[draw,circle,inner sep=2pt,fill] at (-3.5, .5) {};
      \draw node at (-3.5, .8) {$\gamma(T)$};
      \draw node at (2.5,.8) {$\gamma$};

      \draw[ultra thick, red, |-|](-2.5,3) -- (2.5,3);
      \draw[ultra thick, green, |-|](-2,3) -- ++(2,0);
      \draw (-5,3) -- (-2.5,3);
      \draw (2.5,3) -- (5,3);
      \draw node at (1.8,2.6) {$\textcolor{red}{G_0}$};
      \draw (-5,3) arc (180:360:5cm);

      \node[draw,circle,inner sep=2pt,fill] at (-1, 3) {};
      \draw node at (-1, 3.5) {$(r_0,0)$};
      \draw[decorate,decoration={brace,mirror}] (-1, 2.8) -- ++(1,0)
	node[pos=.5, anchor=north, yshift=-2pt] {$\kappa$};
      \draw [dashed](-1,3) -- (-1,-2);
      \draw node at (-1.3,1.5) {$\ell$};
      \node[draw,circle,inner sep=2pt,fill] at (4.83, 1.71) {};
      \draw node at (5.13,1.71) {$z$};

      \coordinate (A) at (1,1.5);
      \coordinate (B) at ( 1,.5);
      \draw[<->] (A) -- (B) node[midway,fill=spinefill] {$\delta$};


    \end{tikzpicture}
    \caption{The curve $\gamma$ starts on $\partial B(0,\epsilon^2)$, goes through the line $\ell$ while keeping a distance $\delta$ from the gate $G_0$. }
    \label{tikz:slit}
  \end{center}
\end{figure}
Suppose that $[r_1, r_2] = [r_0 - \kappa, r_0 + \kappa]$.
Let $\ell = \{r_0\} \times [-\epsilon^2,0)$ be the vertical line segment of length~$\epsilon^2$ below the desired exit interval.
Let  $T = \epsilon^4$, $\delta = \epsilon^2/4$, and let $\gamma$ be a curve parametrized by arc-length such that $\gamma(0) = z$ and the event $\sup_{t \in [0,T]} |Z^\epsilon(t) - \gamma(t)| \leq \delta$ implies that $Z^\epsilon$ hits~$\ell$ before $G_0$ (one example of such a curve is shown in Figure~\ref{tikz:slit}). We can choose such a curve $\gamma$ for which $|\gamma'| \leq O(\epsilon^{-2})$, so that the quantity $S(\gamma)$ in Lemma~\ref{lem:steering} is bounded independent of $\epsilon$ and of $z = \gamma(0) \in B(0,\epsilon^2)$. Notice also that the set $K$ from Lemma~\ref{lem:steering} satisfies
\begin{equation*}
	\P(K) = \P\paren[\big]{ \sup_{t \in [0,T]} |W(t)| \leq \delta} = \P\paren[\Big]{ \sup_{t \in [0,1]} |W(t)| \leq \frac{\delta}{\sqrt{T}}}
\end{equation*}
by Brownian scaling. Then since $\delta/\sqrt{T}$ is constant, this probability is bounded below and Lemma~\ref{lem:steering} states the probability that $Z^\epsilon_t$ hits~$\ell$ before $G_0$ is bounded below (away from zero), independent of $\epsilon$.
By the Markov property it now suffices to finish the proof assuming $z_0 \in \ell$.  Then consider the unique circle with center at $z_0 \in \ell$ such that the circle intersects $G_0$ at the points $(r_0 - \kappa, 0)$ and $(r_0 + \kappa, 0)$.  By symmetry of Brownian motion, the exit distribution on the circle is uniform. The probability that $Z^\epsilon_{\tau_0} \in [r_0 - \kappa, r_0 + \kappa]$ is at least the probability of exiting this circle along the arc above $G_0$, which is the ratio of the arc length to the circumference.  This probability is bounded below by $2 \kappa /(\alpha \epsilon^{2}) \gtrsim |r_1 - r_2|/(\alpha \epsilon^{2})$. 
\end{proof}

\begin{proof}[Proof of Lemma \ref{lem:ABtoG}]
  By the Markov property, Lemma~\ref{lem:ABtoG} will follow from the estimate
\begin{equation}\label{e:exit1}
\sup_{z\in Q}\E^z\int_0^{\tau_0} \one_Q(Z^\epsilon_s) \,ds \leq C \epsilon^2 \abs{\ln \epsilon}\,.
\end{equation}
Let $D = \set{\pm \epsilon/2}\times[-\epsilon,0]$ be the sides of~$Q$, and recall $D'$ (defined in~\eqref{e:ad}) denotes the sides of~$Q'$.
We consider two sequences of stopping times, ~$\zeta_i$, $\eta_i$, denoting successive visits of of~$Z^\epsilon$ to $G_0 \cup D'$ and $D$ respectively.
Precisely, let $\eta_0 = 0$, inductively define
\begin{align*}
  \zeta_i &= \inf\set{s > \eta_{i-1} \st Z^\epsilon_s \in G_0 \cup D'}\\
  \eta_i &= \inf\set{s > \zeta_i \st Z^\epsilon_s \in D}\,,
\shortintertext{for $i \in \set{1, 2, \dots}$, and let}
  M &= \min\set{n\in \mathbb{N} \st Z^\epsilon_{\zeta_n}\in G_0}\,.
\end{align*}
Notice that $\zeta_M = \tau_0$.
Using the strong Markov property, and the fact that $Z^\epsilon_s\notin Q$ for $s\in (\zeta_i,\eta_i)$ for all $i < M$, we obtain
\begin{align}\label{e:stopseq1}
  \nonumber
\E^z\int_0^{\tau_0}\one_Q(Z^\epsilon_s)\,ds
  &= \E^z\sum_{i=1}^M\int_{\eta_{i-1}}^{\zeta_i}\one_Q(Z^\epsilon_s)\,ds
  = \E^z\sum_{i=1}^M \E^{Z^\epsilon_{\eta_{i-1}}}\int_{0}^{\zeta_1}\one_Q(Z^\epsilon_s)\,ds
  \\
  &\leq (\E^z M) \paren[\Big]{
    \sup_{d \in D} \E^d \int_0^{\zeta_1} \one_Q (Z^\epsilon_s) \, ds
  }\,.
\end{align}
 Since $\zeta_1$ is bounded by the exit time of a one dimensional Brownian motion (the first coordinate of $Z^\epsilon$) from an interval of length $3\epsilon/2$, we know
\begin{equation*}
  \sup_{d \in D} \E^d \zeta_1 \leq C \epsilon^2\,.
\end{equation*}
Using this in~\eqref{e:stopseq1} shows
\begin{equation}\label{e:exit2}
  \E^z\int_0^{\tau_0}\one_Q(Z^\epsilon_s)\,ds
    \leq C \epsilon^2 \E^z M\,.
\end{equation}

We now estimate $\E^z M$. Notice that
\begin{align*}
  \P^z( M \geq n )
    &= \P^z( Z^\epsilon_{\zeta_1} \not\in G_0,\ Z^\epsilon_{\zeta_2} \not\in G_0,\ \dots,\ Z^\epsilon_{\zeta_n} \not\in G_0)\\
    &= \E^z \paren[\Big]{
	\one_{\set{Z^\epsilon_{\zeta_1} \not\in G_0,\ Z^\epsilon_{\zeta_2} \not\in G_0,\ \dots,\ Z^\epsilon_{\zeta_{n-1}} \not\in G_0} }
	\P^{Z^\epsilon_{\eta_{n-1}}} (Z^\epsilon_{\zeta_{1}} \not\in G_0)
    }
    \\
    &\leq \P^z \paren{
	Z^\epsilon_{\zeta_1} \not\in G_0,\ Z^\epsilon_{\zeta_2} \not\in G_0,\ \dots,\ Z^\epsilon_{\zeta_{n-1}} \not\in G_0}
      \paren[\Big]{ \sup_{d \in D} \P^{d} (Z^\epsilon_{\zeta_{1}} \not\in G_0) }
    \\
    &= \P^z( M \geq n-1 )
      \paren[\Big]{ \sup_{d \in D} \P^{d} (Z^\epsilon_{\zeta_{1}} \not\in G_0) }\,.
\end{align*}
Thus, by induction
\begin{equation*}
  \P^z ( M \geq n ) \leq
      \paren[\Big]{ \sup_{d \in D} \P^{d} (Z^\epsilon_{\zeta_{1}} \not\in G_0) }^n\,.
\end{equation*}

Now we claim that there exist a constant $c_0>0$, independent of $\epsilon$, such that
\begin{equation}\label{e:exitp1}
\sup_{d\in D} \P^d\paren[\big]{ Z^\epsilon_{\zeta_1} \not\in G_0} < 1 - \frac{c_0}{\abs{\ln \epsilon}}\,.
\end{equation}
This is the key step in the proof.
Once established, it implies
\begin{equation*}
  \E^z M  = \sum_{n = 1}^\infty \P^z( M \geq n )
    \leq \sum_{n = 1}^\infty \paren[\Big]{1 - \frac{c_0}{\abs{\ln \epsilon }}}^{n-1} = \frac{\abs{\ln \epsilon}}{c_0}\,,
\end{equation*}
which when combined with with~\eqref{e:exit2} yields
\begin{equation}\label{e:exit3}
\sup_{z\in D}\E^z\int_0^{\tau_1}\one_Q(Z^\epsilon_s)\,ds \leq \frac{C \epsilon^2 \abs{\ln \epsilon}}{c_0}\,.
\end{equation}
This proves~\eqref{e:exit1} and finishes the proof of Lemma~\ref{lem:ABtoG}.
\medskip

Thus it only remains to prove~\eqref{e:exitp1}.
We will prove it by showing
\begin{equation}\label{e:exitp0}
\inf_{z\in D} \P^z\paren[\big]{ Z^\epsilon_{\zeta_1} \in G_0} > \frac{c_0}{\abs{\ln \epsilon}}\,.
\end{equation}
We will prove this in three stages.
First, by scaling, it is easy to see that the probability that starting from~$D$ the process $Z^\epsilon$ hits $B(0, \epsilon / 4)$ before $D'$ with probability~$c_0 > 0$.
Next, using the explicit Greens function in an annulus we show that the probability that starting from $B(0, \epsilon/4)$, the process $Z^\epsilon$ hits $B(0, \alpha\epsilon^2)$ before exiting $B(0, \epsilon/2)$ with probability $c_0 / \abs{\ln \epsilon}$.
Finally, by scaling, it again follows that that starting from $B(0, \alpha\epsilon^2)$ the process $Z^\epsilon$ hits $G_0$ before exiting $B(0, 2\alpha\epsilon^2)$ with probability $c_0 > 0$.

For the first stage, consider the stopping times 
\begin{align*}
\sigma_{\epsilon/4} & = \inf \set[\Big]{ t > 0 \st Z^\epsilon_t \in B\paren[\Big]{0, \frac{\epsilon}{4}} } \,,\\
\sigma_{D'} & = \inf \set{ t > 0 \st  Z^\epsilon_t \in D' } \,.
\end{align*}
By rescaling, it immediately follows that
\begin{equation}\label{p0lower}
\inf_{z \in D} \P( \sigma_{\epsilon/4} < \sigma_{D'} \st Z^\epsilon_0 = z) \geq p_1\,,
\end{equation}
for some $p_1 > 0$, independent of $\epsilon$.

For the second stage suppose for $Z^\epsilon_0 \in \partial B(0,\epsilon/4)$.
Consider the stopping times~$\sigma_{\alpha\epsilon^2}$ and $\sigma_{\epsilon/2}$ defined by
\begin{align*}
\sigma_{\alpha\epsilon^2} & = \inf \{ t > 0 \st Z^\epsilon_t \in \partial B(0,\epsilon^2) \}\,,\\
\sigma_{\epsilon/2} & = \inf \{ t > 0 \st Z^\epsilon_t \in \partial B(0,\epsilon/2) \}\,.
\end{align*}
The function
\[
f(z) = \frac{\ln (2|z|/\epsilon)}{\ln(2 \alpha\epsilon)}
\]
is harmonic in $B(0,\epsilon/2) \setminus B(0,\alpha\epsilon^2)$ and satisfies $f = 1$ on $\partial B(0,\alpha\epsilon^2)$, and $f = 0$ on $\partial B(0,\epsilon/2)$. This implies that for all $z \in B(0, \epsilon/4)$ we have
\begin{equation}\label{loglower}
\P^z( \sigma_{\alpha\epsilon^2} < \sigma_{\epsilon/2} )
  = f(z)
  = \frac{\ln (1/2)}{\ln(2 \epsilon)} \,.
\end{equation} 

Finally, for the last stage, let $\sigma_{2\alpha\epsilon^2}$ be the first time $Z^\epsilon$ exits $B(0, 2\alpha\epsilon^2)$.
By scaling, it immediately follows that for all $z \in \partial B(0, \alpha\epsilon^2)$
\begin{equation}\label{peps2G0}
  \P^z( \tau_0 < \sigma_{2\alpha\epsilon^2} ) \geq p_2\,,
\end{equation}
for some constant $p_2 > 0$, independent of~$\epsilon$.

The strong Markov property and~\eqref{p0lower}, \eqref{loglower}, and~\eqref{peps2G0} imply
\[
\inf_{z\in D} \P(Z^\epsilon_{\zeta_1} \in G_0\st Z^\epsilon_{0}= z) \geq p_1 \cdot \frac{\ln (1/2)}{\ln(2 \alpha\epsilon)} \cdot p_2 \,.
\]
By the time-homogeneity of the Markov process $Z^\epsilon$, this establishes \eqref{e:exitp0}, finishing the proof.
\end{proof}
\begin{proof}[Proof of Lemma \ref{lem:GtoAB}]
To estimate the local time term, consider the function
\begin{equation*}
w(x,y) =
  \begin{dcases}
    \alpha \epsilon^2 - y\,, & y \in [0,\alpha \epsilon^2]\,,\\
    \alpha \epsilon^2, & \text{otherwise,}
  \end{dcases}
\end{equation*}
which satisfies $\partial_y w(x,0^+) - \partial_y w(x,0^-) = - 1$ for $x \in [-\alpha\epsilon^2/2, \alpha\epsilon^2/2]$. Let $\tau_{A'}$ be the first hitting time to the set $A'$, where we know $w = 0$.
Using It\^o's formula we obtain
\[
\E^z L^{G_0}_{\tau_{A'}}  = w(z), \quad z \in G_0.
\]
Clearly $\tau_{A'} \geq \tau'_0$, and so
\[
\sup_{z \in G_0} \E^z L^{G_0}_{\tau'_0}  \leq \sup_{z \in G_0} \E^z L^{G_0}_{\tau_{A'}} = \alpha \epsilon^2 \,.
\]

Next, we estimate the term
\begin{equation}\label{e:intZtau0prime}
\sup_{z \in G_0} \E^z \int_0^{\tau'_0} \one_{Q}(Z^\epsilon_s)  \, ds \,.
\end{equation}
Let $\tau_{D'} = \inf \{ t > 0 \st Z^\epsilon_t \in D'\}$, so that $\tau_{D'} \geq \tau'_0$.  Let $H = \{ (x,y) \in \R^2 \st y = -\epsilon \}$ denote the bottom boundary of $\Omega_0$, and let $H' = [-3\epsilon/4,3\epsilon/4] \times \{ -\epsilon \} = \bar{Q'} \cap H$. 
We now consider repeated visits to $H'$ before hitting $D'$.
For this, define the stopping times $\{\zeta_k \}_{k=0}^\infty$ inductively by
\begin{align*}
\zeta_0 & = \inf \set{ t > 0 \st Z^\epsilon_t \in H }\,,\\
\zeta_k & = \inf \set{ t \geq \zeta_{k-1} + \epsilon^2 \st Z^\epsilon_t \in H }\,,
  \quad \text{for } k = 1,2,3,\dots\,,\\
\shortintertext{and define}
M &= \min \{ k \in \mathbb{N} \st Z^\epsilon_{\zeta_k} \in H \setminus H' \}\,.
\end{align*}
Observe that if $Z^\epsilon_0 \in G_0$, then $\tau_{D'} \leq \zeta_M$.
Indeed, since $Z^\epsilon_{\zeta_M} \in H \setminus H'$ and trajectories of process $Z^\epsilon$ is continuous, they must must have passed through the set $D'$ at some time before~$\zeta_M$.

Now, to bound~\eqref{e:intZtau0prime} we observe
\begin{align}
 \int_0^{\tau'_0} \one_{Q}(Z^\epsilon_s)  \, ds \leq \int_0^{\zeta_0} \one_{Q}(Z^\epsilon_s)  \, ds + \sum_{k=1}^M \int_{\zeta_{k-1}}^{\zeta_k} \one_{Q}(Z^\epsilon_s)  \, ds. \label{tausum}
\end{align}
On the event $\{M > k-1\}$ we must have $Z^\epsilon_{\zeta_{k-1}} \in H'$. Using this observation, the strong Markov property, and the time-homogeneity of the process, we see that for any $z \in G_0$ we have
\begin{align} 
\E^z \int_0^{\tau'_0} \one_{Q}(Z^\epsilon_s)  \, ds & \leq \E^z \int_0^{\zeta_0} \one_{Q}(Z^\epsilon_s)  \, ds  + \E^z  \sum_{k=1}^M \int_{\zeta_{k-1}}^{\zeta_k} \one_{Q}(Z^\epsilon_s)  \, ds  \nonumber \\
& = \E^z  \int_0^{\zeta_0} \one_{Q}(Z^\epsilon_s)  \, ds + \E^z \sum_{k=1}^M \E^{Z^\epsilon_{\zeta_{k-1}}} \int_{\zeta_{0}}^{\zeta_1} \one_{Q}(Z^\epsilon_s)  \, ds  \nonumber \\
& \leq \E^z  \int_0^{\zeta_0} \one_{Q}(Z^\epsilon_s)  \, ds + \E^z \sum_{k=1}^M \sup_{z' \in H'} \E^{z'} \int_{\zeta_{0}}^{\zeta_1} \one_{Q}(Z^\epsilon_s)  \, ds  \nonumber \\
& = \E^z  \int_0^{\zeta_0} \one_{Q}(Z^\epsilon_s)  \, ds + (\E^z M) \sup_{z' \in H'} \E^{z'} \int_{\zeta_{0}}^{\zeta_1} \one_{Q}(Z^\epsilon_s)  \, ds\,.  \label{tausum2}
\end{align}

We now bound the right hand side of~\eqref{tausum2}.
Note
\begin{equation}\label{e:EM1}
  \E^z M
    = \sum_{j = 1}^\infty \P^z(M \geq j)
    = \sum_{j = 1}^\infty \P^z( Z^\epsilon_{\zeta_{0}} \in H',\ Z^\epsilon_{\zeta_1} \in H',\ \dots,\ Z^\epsilon_{\zeta_{j-1}} \in H' )\,.
\end{equation}
By the Markov property
\begin{align}
  \nonumber
  \P^z\paren[\big]{ Z^\epsilon_{\zeta_{i+1}} \in H',\ Z^\epsilon_{\zeta_i} \in H'} &= \E^z \paren[\big]{ \one_{Z^\epsilon_{\zeta_i} \in H'} \P^{Z^\epsilon_{\zeta_i}} ( Z^\epsilon_{\zeta_1} \in H' ) }
  \\
  \label{e:zetai1}
  &\leq \paren[\Big]{ \sup_{z' \in H'} \P^{z'}( Z^\epsilon_{\zeta_1} \in H' ) } \P^z( Z^\epsilon_{\zeta_i} \in H' )
\end{align}
Now using Lemma~\ref{lem:steering} and the fact that $\zeta_1 \geq \epsilon^2$,  one can show that
\begin{equation*}
   \sup_{z' \in H'} \P^{z'}( Z^\epsilon_{\zeta_1} \in H' ) \leq 1 - c_0\,,
\end{equation*}
for some constant $c_0 > 0$, independent of~$\epsilon$.
Combining this with~\eqref{e:zetai1} and using induction we obtain
\begin{equation*}
    \sum_{j = 1}^\infty \P^z( Z^\epsilon_{\zeta_{0}} \in H',\ Z^\epsilon_{\zeta_1} \in H',\ \dots,\ Z^\epsilon_{\zeta_{j-1}} \in H' )
      \leq \sum_{j=1}^\infty (1 - c_0)^{j-1}\,.
\end{equation*}
Thus, using~\eqref{e:EM1} we see 
\begin{equation*}
  \E^z M \leq \frac{1}{c_0}\,.
\end{equation*}

Using this in~\eqref{tausum2} we have
\begin{align}
\E^z \int_0^{\tau'_0} \one_{Q}(Z^\epsilon_s)  \, ds & \leq  \E^z \int_0^{\zeta_0} \one_{Q}(Z^\epsilon_s)  \, ds + \frac{1}{c_0} \sup_{z' \in H'} \E^{z'} \int_{\zeta_{0}}^{\zeta_1} \one_{Q}(Z^\epsilon_s)  \, ds   \nonumber \\
\label{Qtime1}
& \leq \E^z \int_0^{\zeta_0} \one_{Q}(Z^\epsilon_s)  \, ds + \frac{1}{c_0} \paren[\Big]{ \epsilon^2 + \sup_{z' \in \Omega} \E^{z'} \int_0^{\zeta_0} \one_{Q}(Z^\epsilon_s)  \, ds  }\,.
\end{align}
To bound this, consider the function
\begin{equation*}
  v(x,y)
    = \begin{dcases}
	\tfrac{1}{2} \paren{\epsilon^2 - y^2} \,, & y \in [-\epsilon,0]\,, \\
	\tfrac{1}{2} \epsilon^2 \,, & y > 0\,.
    \end{dcases}
\end{equation*}
and observe that for any $z \in \Omega_\epsilon$,
\begin{equation*}
\E^z \int_0^{\zeta_0} \one_{Q}(Z^\epsilon_s)  \, ds
  \leq \E^z \zeta_0
   =  v(z) \leq \frac{\epsilon^2}{2} \,.
\end{equation*}
Substituting this in~\eqref{Qtime1} shows
\[
\E^z \int_0^{\tau'_0} \one_{Q}(Z^\epsilon_s)  \, ds  \leq \paren[\Big]{\frac{1}{2} + \frac{3}{2c_0} } \epsilon^2\,,
\]
completing the proof.
\end{proof}

Finally, for completeness we prove Lemma~\ref{lem:steering}.
The proof is a standard argument using the Girsanov theorem, and can for instance be found in~\cite{FreidlinWentzell12} (see Theorem 3.2.1, therein).
\begin{proof}[Proof of Lemma~\ref{lem:steering}]
Define $Y(t) = W(t)  - \gamma(t)$.  Let $B(t)$ be an independent Brownian motion in $\R$ with respect to measure $\P$.  Let define a new measure $\bm{Q}$ by
\[
\frac{d \bm{Q}}{d\P} = e^{- \int_0^T \gamma'(s) \,dB(s) - \frac{1}{2} \int_0^T |\gamma'(s)|^2 \,ds}
\]
Let $\tilde K$ be the event $\tilde K = \tilde K_{T,\delta} = \{ \sup_{t \in [0,T]} |B(t)| \leq \delta \}$. Let $S(\gamma) = \int_0^T |\gamma'(s)|^2 \,ds$. According to the Girsanov theorem, 
\begin{align*}
\P( \sup_{ t \in [0,T]} |Y(t)| \leq \delta) & =  \bm{Q}( \tilde K ) \nonumber \\
& = \E_{\P}\left[ \one_{\tilde K} e^{- \int_0^T \gamma'(s) \,dB(s) - \frac{1}{2} \int_0^T |\gamma'(s)|^2 \,ds} \right]\\
& = e^{- \frac{1}{2} S(\gamma)} \E_{\P}\left[ \one_{\tilde K} e^{- \int_0^T \gamma'(s) \,dB(s)} \right]
\end{align*}
Now, by Chebychev and the It\^o isometry, 
\[
\P\left( \int_0^T \gamma'(s) \,dB(s) \geq \alpha \sqrt{S(\gamma)} \right) \leq \frac{1}{\alpha^2}
\]
So, if$\frac{1}{\alpha^2} \leq \frac{1}{2} \P(\tilde K)$, we have 
\[
\P \left( \sup_{ t \in [0,T]} |Y(t)| \leq \delta\right) \geq e^{- \frac{1}{2} S(\gamma) - \alpha \sqrt{S(\gamma)} }) \frac{1}{2} \P(\tilde K)
\]
In particular, by choosing $\alpha = \sqrt{2/\P(\tilde K)} > 0$, we have
\[
\P \left( \sup_{ t \in [0,T]} |Y(t)| \leq \delta\right) \geq e^{- \frac{1}{2} S(\gamma) -  \sqrt{2S(\gamma)/\P(\tilde K)} } \frac{1}{2} \P(\tilde K)
\]
Note that $\P(\tilde K) = \P(K)$ since $B$ and $W$ have the same law under $\P$. 
\end{proof}

\subsection{Local Time on Teeth Boundaries (Lemma~\ref{l:FCLocalTimeT}).}\label{s:FCLocalTimeT}
The last remaining lemma to prove is Lemma~\ref{l:FCLocalTimeT} which is the local time balance within the teeth. We again use the symmetry and geometric series arguments as in the proof of Proposition~\ref{p:uosc1}.
\begin{proof}[Proof of Lemma~\ref{l:FCLocalTimeT}]
As with \eqref{claim2G}, we will estimate 
\begin{multline} \label{claim1xk}
 I_k \defeq \E^z \Bigl( \int_0^t \frac{1}{2} \partial_x^2 f(Z^\epsilon_s) \one_{\{Y_s^\epsilon > 0\}} \one_{\{|X_s^\epsilon - \epsilon k| < \epsilon/2\}}\,ds
 \\
  +  \int_0^t \partial_x f(Z^\epsilon_s) \one_{\{|X_s^\epsilon - \epsilon k| < \epsilon/2\}} \, d L^{\pm}_s \Bigr)
\end{multline}
for any $z \in K \cap \Omega_\epsilon$.
As before, Lemma~\ref{l:FCLocalTimeT} will follow if we can show that for any finite $M$, $\sum_{\epsilon \abs{k} < M} I_k$ vanishes as $\epsilon \to 0$.
Since there are $O(1 / \epsilon)$ terms in the sum, it suffices to bound each $I_k$ by $o(\epsilon)$.
Without loss of generality, assume $k = 0$ and let $T_0 = [-\alpha\epsilon^2/2,\alpha\epsilon^2/2] \times [0,1]$ denote the tooth centered at $k = 0$.
Define the function~$\tilde f \colon T_0 \to \R$ by
\begin{equation*}
\tilde f(x,y) \defeq f(x,y) - f(0,y) - x \partial_x f(0,y) \,,
\end{equation*}
Note that for all $(x,y) \in T_0$ we have
\begin{equation*}
\tilde f(0,y) = 0\,,
\qquad
\partial_x \tilde f(0,y) = 0\,,
\qquad\text{and}\qquad \partial_x^2\tilde f(x,y) = \partial_x^2 f(x,y)\,.
\end{equation*}
and hence $\norm{\tilde f}_\infty = O(\epsilon^4)$.
Moreover,
\[
\partial_y^2\tilde f(x,y) = \partial_y^2 f(x,y) - \partial_y^2 f(0,y) - x \partial_x \partial_y^2 f(0,y) = O(\epsilon^4 ),
\]
assuming $\partial_y^2 f \in C^1$, and $\partial_y \tilde f(x,0) = O(\epsilon^4)$ for $x \in [-\alpha\epsilon^2/2,\alpha\epsilon^2/2]$.

We now extend the definition of $\tilde f$ continuously outside of $T_0$ (into the spine) to a $O(\epsilon^2)$ neighborhood of $G$ as follows.
Let $\eta(x,y)$ be a smooth, radially-symmetric cutoff function, vanishing outside of $B_{2}(0,0)$ and such that $\eta(z) = 1$ for $\abs{z} \leq 1$.
Then, for $y \leq 0$ (i.e.\ outside the tooth $T_0$), define
\begin{equation*}
  \tilde f(x,y) \defeq
    \eta\paren[\Big]{ \frac{x}{\alpha\epsilon^2}, \frac{y}{\alpha\epsilon^2}}
    \paren[\Big]{ f(x,0) - f(0,0) - x \partial_x f(0,0)  }\,.
\end{equation*}
In this way, $\tilde f$ has the additional properties that 
\begin{enumerate}
  \item $\tilde f$ vanishes outside of $T_0 \cup B_{2 \alpha\epsilon^2}(0,0)$,
  \item $\partial_y \tilde f = 0$ on $(\partial Q) \setminus G$, 
  \item The jump in $\partial_y \tilde f$ across $G$ is $O(\epsilon^4)$,
  \item $\Delta \tilde f= O(1)$ in the region $B^-_{2\alpha \epsilon^2} = \{y \leq 0\} \cap B_{2 \alpha\epsilon^2}(0,0)$.
\end{enumerate}
This last point stems from the fact that $|f(x,0) - f(0,0) - x \partial_x f(0,0) | = O(\epsilon^4)$. In view of this construction, we see that
\begin{align*}
  I_0 = \E^z \Bigl( \int_0^t \frac{1}{2} (\partial_x^2\tilde f &+ \partial_y^2\tilde f) (Z^\epsilon_s) \one_{\{Z_s^\epsilon \in T_0 \}} \,ds -  \int_0^t \partial_x \tilde f(Z^\epsilon_s) \one_{\{Z^\epsilon_s \in T_0\}}d L^{+}_s  \Bigr)   \\
  &\qquad\qquad + \E^z \paren[\Big]{ \int_0^t \partial_x f(0,Y_s^\epsilon)d(L^-_s - L^+_s) }   + O(\epsilon^2) t \\
  & = R_1 + R_2 + O(\epsilon^2) t\,.
\end{align*}
Notice how we have introduced the $\partial_y^2\tilde f$ term for the price of $O(\epsilon^2)t$.   We also still have $\partial_y \tilde f(x,1) = 0$ on the top boundary of the tooth.  By It\^o's formula applied to $\tilde f$, we have
\begin{align*}
R_1 & = \E^z [\tilde f(Z^\epsilon_t) - \tilde f(Z^\epsilon_0)]  + \E^z \paren[\Big]{ \int_0^t \partial_y \tilde f(X^\epsilon_s,0) dL^G } + \E^z  \paren[\Big]{\int_0^t O(1)\one_{B_{2\alpha\epsilon^2}}(Z_s) \,ds}   \\
& = O(\epsilon^4) + O(\epsilon^2) \E^z \paren[\Big]{ L^G_t }  +  O(1)  \E^z  \paren[\Big]{\int_0^t \one_{B^-_{2 \alpha\epsilon^2}}(Z_s) \,ds} \\
& = O(\epsilon^4) +  O(\epsilon^2) + O(1) R_3,
\end{align*}
by since $\E^z L^G_t = O(1)$ by~\eqref{e:ELG}.

We now estimate the term $R_2$.
By symmetry with respect to reflection in the $y$ coordinate, we note that
\[
\E^{z'} \paren[\Big]{ \int_0^t \partial_x f(0, Y_s^\epsilon)d(L^-_s - L^+_s) } = 0 
\]
for any $z' = (0,y)$ on the axis of the tooth $T_0$.
Thus by symmetry and the Markov property, it suffices to estimate
\begin{equation*}
\E^z \paren[\Big]{ \int_0^\tau \partial_x f(0, Y_s^\epsilon) \, dL^+_s }\,,
\end{equation*}
where $\tau = \inf\set{t \st X^\epsilon_t  = 0}$ is the first time that $Z^\epsilon_t$ reaches this $x$-axis $\{0\} \times \R$, and $z$ is to the right of the $y$-axis.
Clearly this is bounded by $\norm{\partial_x f}_\infty \E^z L^+_\tau$.
Moreover,  using $x \varmin \alpha \epsilon^2 / 2$ as a test function, we immediately see $\E^z L^+_\tau \leq \alpha \epsilon^2 / 2$.
This shows $R_2 = O(\epsilon^2)$ as desired.

Finally, we estimate the term
\[
R_3 = \E^z \paren[\Big]{\int_0^t \one_{B^-_{2 \alpha\epsilon^2}}(Z_s) \,ds},
\]
where $B^-_{2 \alpha\epsilon^2} = \{y \leq 0\} \cap B_{2 \epsilon^2}(0,0)$.
The geometry of the domain $\Omega_\epsilon$ makes this estimate a little tedious.
Since the proof is very similar to the arguments used in the proof of Proposition~\ref{p:uosc1}, we do not spell out all the details here.

We will show that $R_3 \leq O(\epsilon^3 |\log(\epsilon)|)$.
For this, we first claim
\[
\sup_{z \in \Omega_\epsilon \cap K} \E^z \paren[\Big]{\int_0^{\tau_{4\alpha\epsilon^2}} \one_{B^-_{2\alpha \epsilon^2}}(Z_s) \,ds} \leq O(\epsilon^4)
\]
where $\tau_{4 \alpha\epsilon^2} = \inf \{ t \;|\; Z^\epsilon_t \in D^-_{4 \epsilon^2} \}$, and $D^-_{4\alpha \epsilon^2} = \{ y \leq 0\} \cap \partial B_{4 \alpha\epsilon^2}(0,0)$.
This follows by directly applying It\^o's formula with a  function $f$ satisfying $\Delta f \leq 0$ in $\{ y \leq 0\} \cap B_{4 \alpha\epsilon^2}(0,0) \}$, with $\Delta f \leq - c < 0$ in $B^-_{2\alpha \epsilon^2}$.

Next, we claim that there is $C > 0$ such that
\[
  \inf_{z \in D^-_{4 \alpha\epsilon^2} } \P^z \paren[\Big]{ \sigma_{\epsilon/2} \leq \tau_{2\alpha \epsilon^2} } \geq \frac{C}{\abs{\log(\epsilon)}} \,,
\]
where $\sigma_{\epsilon/2} = \inf \{ t \st |X^\epsilon_t| = \epsilon/2 \}$ and $\tau_{2 \alpha\epsilon^2} = \inf \{ t \st Z^\epsilon_t \in B^-_{2 \alpha\epsilon^2} \}$.
This is the narrow escape asymptotics~\cites{HolcmanSchuss14}, and follows from a direct calculation with the Greens function in a manner similar to the proof of~\eqref{e:exitp1}.
Finally, we claim that for any $t > 0$, there is $C > 0$ such that
\[
\inf_{\{ |x| = \epsilon/2 \}} \Pm^z(\tau_{2 \alpha\epsilon^2} \geq t) \geq C \epsilon.
\]
This follows from comparison between $X^\epsilon_t$ and a standard Brownian motion on $\R$, via Lemma \ref{lem:BrownianComp}.  Thus, starting from $z \in D^-_{4 \alpha\epsilon^2}$, with probability at least $C \epsilon/|\log(\epsilon)|$ the process $Z_t$ will make a long excursion such that it doesn't return to $B^-_{2\alpha \epsilon^2}$ before time $t$. Using the same geometric series argument as in the proof of Lemma~\ref{lem:ABtoG}, we have
\[
R_3 \leq C (\log(\epsilon)/\epsilon) \sup_{z} \E^z \paren[\Big]{\int_0^{\tau_{4\alpha\epsilon^2}} \one_{B^-_{2\alpha \epsilon^2}}(Z_s) \,ds}  = O(\epsilon^3 |\log(\epsilon)|) \,,
\]
as claimed.

Finally, combining all these estimates we conclude that for any $k$, $I_k$ (defined in~\eqref{claim1xk}) is at most $O(\epsilon^2)$.
Consequently $\sum_{\epsilon \abs{k} < M} I_k \to 0$ as $\epsilon \to 0$, concluding the proof.
\end{proof}

\subsection{Remarks About Other Scalings.}\label{sec:otherscaling}
Consider a comb-shaped domain with the general scaling described in Remark \ref{scale2}.
For clarity, let us suppose that 
\[
w_S(\epsilon) = \epsilon^{\sigma} \,,
\qquad\text{and}\qquad
w_T(\epsilon) = \frac{\alpha \epsilon^{1 + \sigma}}{2}\,,
\]
for some~$\sigma > 0$.  Theorem \ref{t:zlimfat}, which we have proved already, pertains to the case $\sigma = 1$.   In the cases $\sigma <1$ and $\sigma > 1$, the same arguments may be applied, showing that the limit process is the same as with $\sigma = 1$.
Only a minor modification of Proposition~\ref{p:uosc1} and its supporting lemmas are required, and we sketch those modifications here. 

Analogous to the previous definition \eqref{e:QG0}, we define the sets
\begin{equation}
Q = \brak[\big]{ -\frac{\epsilon}{2}, \frac{\epsilon}{2}} \times \brak[\big]{ -\epsilon^\sigma,0} \quad \quad \text{and}\quad \quad G_0 = \set[\Big]{ (x,0) \st - \alpha \frac{ \epsilon^{1 + \sigma}}{2} < x < \alpha \frac{ \epsilon^{1 + \sigma}}{2} }. \label{Qnew}
\end{equation}
Notice that $Q$ is no longer a square if $\sigma \neq 1$.  In the case $\sigma > 1$, the bound $0 \leq u^\epsilon \leq C \epsilon^{2} |\ln \epsilon|$ in Proposition \ref{p:uosc1} remains unchanged.  The proofs of Lemma~\ref{lem:rholower}, Lemma~\ref{lem:ABtoG}, and Lemma~\ref{lem:GtoAB}, extend in a straightforward way.  In particular, the lower bound in Lemma \ref{lem:rholower} becomes $\rho(z,r) \geq \delta / (\alpha \epsilon^{1 + \sigma})$.  In the proof of \eqref{e:exitp0} within Lemma \ref{lem:ABtoG}, the balls $B(0,\epsilon^\sigma/4)$ and $B(0,\alpha \epsilon^{1 + \sigma})$ fill the roles of $B(0,\epsilon/4)$ and $B(0,\alpha \epsilon^2)$ in the previous proof. 

In the case $\sigma \in (0, 1)$, the bound on $u^\epsilon$ in Proposition \ref{p:uosc1} becomes $0 \leq u^\epsilon \leq C \epsilon^{1 +\sigma} |\ln \epsilon|$.  Nevertheless, this bound is still $o(\epsilon)$, so that the rest of the argument for the proof of Lemma~\ref{l:FCgenerator} proceeds as before.  To prove this modification of Proposition \ref{p:uosc1}, we can modify Lemma \ref{lem:rholower}, Lemma \ref{lem:ABtoG}, and Lemma \ref{lem:GtoAB}, as follows. First, $A'$ and $D'$ are defined to be the sets
\[
A' \defeq \brak[\Big]{ - \alpha \frac{\epsilon^{1 + \sigma}}{2} , \alpha \frac{\epsilon^{1 + \sigma}}{2}  } \times \{ \alpha \epsilon^{1 + \sigma} \} \quad \quad \text{and} \quad \quad D' \defeq \{ \pm \epsilon^{\sigma} \} \times [-\epsilon^\sigma,0].
\]
With these definitions, the lower bound of Lemma \ref{lem:rholower} becomes $\rho(z,r) \geq \frac{\delta}{\alpha \epsilon^{1 + \sigma}}$.    In Lemma \ref{lem:ABtoG}, the analogous bound becomes $O(\epsilon^{1 + \sigma}| \ln(\epsilon|)$.  Here, the logarithmic factor arises in the same way as before.  The $\epsilon^{1 + \sigma}$ factor comes from the fact that for a Brownian motion on $\Rm$, the expected time spent in $[-\epsilon,\epsilon]$ before hitting $\pm \epsilon^\sigma$ is $O(\epsilon^{1 + \sigma})$.  Similarly, the bound in Lemma \ref{lem:GtoAB} is $O(\epsilon^{1 + \sigma})$.  Together these imply the $O(\epsilon^{1 +\sigma} |\ln \epsilon|)$ upper bound in Proposition \ref{p:uosc1}.

\section{Comb-Shaped Graphs (Theorem~\ref{t:zlim}).}\label{s:thincomb}

\subsection{An SDE Description of \texorpdfstring{$Z^\epsilon$}{Z-epsilon}.}

We begin by constructing the graph diffusion $Z^\epsilon$ on the comb $\mathcal C_\epsilon$.
Following the approach of Freidlin and Sheu~\cite{FreidlinSheu00}, let $\mathcal{L}^\ep$ be the linear operator defined by
\begin{equation}\label{e:Lep}
  \mathcal{L}^\ep f =
    \begin{dcases}
	\frac{1}{2} \p_y^2f & \text{ if } (x,y) \in \ep\Z \times(0,1) \,,\\
	\frac{1}{2} \p_x^2f & \text{ if } (x,y) \in \R \times \{0\} \,.
    \end{dcases}
\end{equation}
Let the domain, denoted by $\mathcal{D}(\mathcal{L}^\ep)$, be the set of all functions
\begin{equation*}
  f\in C_0(\Omega_\ep) \cap  C^2_b(\Omega_\ep - J_\epsilon)
\end{equation*}
such that $\mathcal{L}^\ep f \in C_0(\Omega_\ep)$ and
\begin{subequations}
\begin{alignat}{2}
  \label{e:flux1}
  \span \alpha \ep \p_y f(x,0)
    + \p_x^+f(x,0) -\p_x^-f(x,0) = 0
    &\quad&\text{for } x \in \ep\Z \,,
  \\
  \label{e:flux2}
  \span\p_y f(x,1) = 0
    &&\text{for } x \in \ep\Z \,
\end{alignat}
\end{subequations}
The general theory in~\cite[\S4.1--4.2]{EthierKurtz86} (see also~\cite[Theorem~3.1]{FreidlinWentzell93}) can be used to show the existence of a continuous Fellerian Markov process $Z^\epsilon = (X^\epsilon, Y^\epsilon)$ that has generator $\mathcal{L}^\epsilon$.

In the teeth, and in between the nodes, it is clear that $Z^\epsilon$ is simply a Brownian motion.
The flux conditions~\eqref{e:flux1}--\eqref{e:flux2} introduce local time terms at junction points and ends of the teeth.
This can be stated precisely in terms of an It\^o formula as in the following Lemma.

\begin{lemma}\label{l:graphIto}
  Let $F$ be the set of all functions $f \in C(\mathcal C_\epsilon)$ such that $f$ is smooth on $\mathcal C_\epsilon - J_\epsilon$ and all one sided derivatives exist at the junction points $J_\epsilon$.
  There is a Brownian motion~$W$ such that for any for any $f \in F$ we have
  \begin{align*}
    df(Z^\epsilon_t)
      &= \one_{\set{Y^\epsilon_t = 0}} \partial_x f(Z^\epsilon_t) \, dW_t
	+ \frac{1}{2} \one_{\set{Y^\epsilon_t = 0}} \partial_x^2 f(Z^\epsilon_t) \, dt
    \\
      &\quad
	+ \one_{\set{Y^\epsilon_t > 0}} \partial_y f(Z^\epsilon_t) \, dW_t
	+ \frac{1}{2} \one_{\set{Y^\epsilon_t > 0}} \partial_y^2 f(Z^\epsilon_t) \, dt
    \\
      &\quad
      \frac{1}{2 + \alpha \epsilon}
      \paren[\Big]{
	 \partial_x^+ f(Z^\epsilon_t)
	- \partial_x^- f(Z^\epsilon_t)
	+ \alpha \epsilon \partial_y f(Z^\epsilon_t)
      } \, d\ell_t\,.
  \end{align*}
  Here $\ell$ defined by
  \begin{equation}\label{e:ellDef}
	  \ell_t = L_t^{Z^\epsilon}(J_\epsilon)
  \end{equation}
  is the local time of the joint process $Z^\epsilon_t = (X^\epsilon_t, Y^\epsilon_t)$ about the junction points $\epsilon \Z \times \set{0}$.
\end{lemma}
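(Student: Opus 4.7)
My plan is to decompose the formula into a contribution coming from the martingale problem defining $Z^\epsilon$ plus a junction correction extracted from a Tanaka-type calculation, with the coefficient $1/(2+\alpha\epsilon)$ emerging from the gluing condition.

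I would begin by producing a driving Brownian motion. Since $Z^\epsilon$ is a continuous Feller process with generator $\mathcal{L}^\epsilon$, the standard martingale-problem theory gives that $f(Z^\epsilon_t)-f(Z^\epsilon_0)-\int_0^t \mathcal{L}^\epsilon f(Z^\epsilon_s)\,ds$ is a continuous local martingale for each $f\in\mathcal{D}(\mathcal{L}^\epsilon)$. Applying L\'evy's characterization to smooth, compactly supported approximations of the coordinate functions produces a one-dimensional Brownian motion $W$ for which the semimartingale decompositions of $X^\epsilon$ and $Y^\epsilon$ have respective martingale parts $\int \one_{\{Y^\epsilon=0\}}\,dW$ and $\int \one_{\{Y^\epsilon>0\}}\,dW$, with bounded-variation parts supported on junction times. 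Between successive visits to $J_\epsilon$ the process runs as a one-dimensional Brownian motion along a single edge, so for any $f\in F$ the standard It\^o formula delivers the four ``bulk'' terms on the right-hand side. Summing across edge-excursions, what remains is to verify that the residual is supported on junction times and equals the asserted local time integral.

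To isolate the junction contribution, I would write $f = g + h$, where $g\in\mathcal{D}(\mathcal{L}^\epsilon)$ matches $f$ away from junctions and $h$ is supported in small neighborhoods of $J_\epsilon$, carrying the entire flux defect $\Phi_k(f)\defeq\partial_x^+ f(k\epsilon,0)-\partial_x^- f(k\epsilon,0)+\alpha\epsilon\,\partial_y f(k\epsilon,0)$. The martingale problem handles $g$, and since $\Phi_k(g)=0$, the proposed junction term vanishes for $g$. The correction $h$ can be taken piecewise-affine near each junction (linear combinations of $(x-k\epsilon)_\pm$ on the spine and of $y$ on the tooth), and its It\^o formula is then computed directly via Tanaka's formula applied to $X^\epsilon$ and $Y^\epsilon$: the $(x-k\epsilon)_\pm$ pieces produce $L^{X^\epsilon}_t(k\epsilon)$ and the $y$-piece produces $L^{Y^\epsilon}_t(k\epsilon)$. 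The gluing condition
\[
L^{X^\epsilon}_t(J_\epsilon) = \tfrac{2}{2+\alpha\epsilon}\ell_t, \qquad L^{Y^\epsilon}_t(J_\epsilon) = \tfrac{\epsilon}{2+\alpha\epsilon}\ell_t
\]
then aggregates these one-sided local times into the single factor $1/(2+\alpha\epsilon)$ multiplying $\Phi_k(f)\,d\ell_t$, summed over junctions.

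The hardest step will be the Tanaka/gluing calculation where the one-sided local times of $X^\epsilon$ and $Y^\epsilon$ at each junction must be precisely related to the joint local time $\ell_t$. It is here that the specific flux weights $2$ and $\alpha\epsilon$ of \eqref{e:flux1} manifest as the gluing-ratio $2{:}\alpha\epsilon$ and hence produce the coefficient $1/(2+\alpha\epsilon)$; a careful argument using excursion theory, or alternatively testing the martingale problem against a well-chosen family of $\mathcal{D}(\mathcal{L}^\epsilon)$ functions that isolate the one-sided components, should pin this down. Once this single-junction identity is established, only finitely many junctions are visited by $Z^\epsilon$ in bounded time, so the global formula for general $f \in F$ follows by summing contributions and passing to the limit in the decomposition $f = g + h$.
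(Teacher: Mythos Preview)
The paper does not actually prove this lemma: its entire proof is a one-line citation to Lemma~2.3 of Freidlin and Sheu~\cite{FreidlinSheu00}, where the It\^o formula for graph diffusions is established in general. Your proposal is therefore considerably more detailed than the paper's own treatment, and what you have outlined is essentially a reconstruction of the Freidlin--Sheu argument specialized to the comb graph: obtain the driving Brownian motion from the martingale problem via L\'evy's criterion, apply the ordinary It\^o formula on each open edge, and then identify the boundary correction at junctions through a Tanaka-type computation together with the local-time splitting dictated by the flux weights.

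Your sketch is sound in outline. The one place that genuinely requires work, as you correctly flag, is the identification of the ratios $L^{X^\epsilon}:L^{Y^\epsilon}:\ell$ at each junction; this is exactly the content of the Freidlin--Sheu lemma and is what produces the coefficient $1/(2+\alpha\epsilon)$. Testing the martingale problem against functions in $\mathcal{D}(\mathcal{L}^\epsilon)$ that are piecewise affine near a fixed junction (so that $\mathcal{L}^\epsilon f \equiv 0$ locally and only the local-time term survives) is indeed how one pins this down, and the paper itself uses the resulting identity $L^{Z^\epsilon}=\tfrac{2+\alpha\epsilon}{2}L^{X^\epsilon}=\tfrac{2+\alpha\epsilon}{\alpha\epsilon}L^{Y^\epsilon}$ later in the proof of Lemma~\ref{l:freidlin}. (Note a small slip: your second displayed ratio should read $\alpha\epsilon/(2+\alpha\epsilon)$ rather than $\epsilon/(2+\alpha\epsilon)$, matching the weight $\alpha\epsilon$ on $\partial_y$ in the flux condition; the paper's introduction contains the same typo, corrected in equation~\eqref{e:LtZ}.) So your route is correct but amounts to reproving the cited result; for the purposes of this paper, invoking \cite{FreidlinSheu00} directly is the intended and shortest path.
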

\begin{remark}
  The coefficients of each of~$\partial_x^-$, $\partial_x^+$ and $\partial_y$ in the local time term above can heuristically be interpreted the chance that~$Z^\epsilon$ enters the teeth.
\end{remark}
\begin{proof}
  We refer the reader to Section 2 (and specifically Lemma 2.3) in Freidlin and Sheu~\cite{FreidlinSheu00} where stochastic calculus for graph diffusions is developed in a general setting.
\end{proof}

Notice that choosing~$f(x,y) =x$ and~$f(x,y) = y$ in Lemma~\ref{l:graphIto} yields the following SDEs:
\begin{subequations}
\begin{align}
	\label{e:sdeXep}
	dX^\ep_t &= \one_{\{Y^\ep_t = 0\}} \, dW_t\,,
	\\
	\label{e:sdeYep}
	dY^\ep_t &= \one_{\{Y^\ep_t > 0\}} \, dW_t
	+ \frac{\alpha\epsilon}{2+ \alpha \epsilon}\, d\ell_t - dL^{Y^\epsilon}_t(1)\,
\end{align}
\end{subequations}
Note that~\eqref{e:sdeXep} and~\eqref{e:sdeYep} are coupled through the local time term $d\ell$, which is the local time of the joint process $Z^\epsilon = (X^\epsilon, Y^\epsilon)$ at the junction points~$J_\epsilon$.
We claim that with the additional assumption that the process spends~$0$ time in junctions, weak uniqueness holds for~\eqref{e:sdeXep}--\eqref{e:sdeYep}, and thus this system  can in fact be used to characterize the process $Z^\epsilon$.
Since this will not be used in this paper, we refer the reader to Engelbert and Peskir~\cite{EngelbertPeskir14} for the proof of similar results.

\subsection{Proof of Convergence (Theorem \ref{t:zlim}).}\label{sec:thinconv}

We now prove~Theorem~\ref{t:zlim}.
As with the proof of Theorem~\ref{t:zlimfat}, we need to prove tightness and a ``generator estimate''.
We state the results we require as the following two lemmas.

\begin{lemma}\label{l:tightness}
Let $Z^\epsilon = (X^\epsilon, Y^\epsilon)$ be the process on the comb-shaped graph~$\mathcal C_\epsilon$, as defined above. Then for any $T > 0$, the family of processes $Z^\epsilon$ is tight on $C([0,T]; \R^2)$.
\end{lemma}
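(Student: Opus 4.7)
The plan is to adapt the strategy of Lemma~\ref{l:FCtightness} to the graph setting, relying on the SDE description~\eqref{e:sdeXep}--\eqref{e:sdeYep} and the Dambis-Dubins-Schwarz time-change theorem. Tightness of the initial distributions is immediate from the weak convergence of $\mu^\epsilon$, so only a uniform modulus-of-continuity bound for each coordinate is needed. For simplicity I assume $h_0 = 1$ as in Remark~\ref{r:h0eq1}; the case $h_0 = \infty$ is handled identically after dropping the reflection term at the top.

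For $X^\epsilon$, equation~\eqref{e:sdeXep} exhibits $X^\epsilon$ as a continuous martingale with quadratic variation $\langle X^\epsilon\rangle_t = \int_0^t \one_{\set{Y^\epsilon_s = 0}}\,ds \leq t$. By Dambis-Dubins-Schwarz, on a possibly enlarged probability space there is a standard Brownian motion $\beta^\epsilon$ with $X^\epsilon_t - X^\epsilon_0 = \beta^\epsilon(\langle X^\epsilon\rangle_t)$. Since $\langle X^\epsilon\rangle$ has increments bounded by the corresponding interval length, the modulus of continuity of $X^\epsilon$ on $[0,T]$ is pathwise dominated by that of $\beta^\epsilon$, which is a standard Brownian motion. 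Combined with tightness of $X^\epsilon_0$ under $\mu^\epsilon$, this gives tightness of $X^\epsilon$ in $C([0,T];\R)$.

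For $Y^\epsilon$, equation~\eqref{e:sdeYep} yields the semimartingale decomposition $Y^\epsilon_t = Y^\epsilon_0 + M^\epsilon_t + A^\epsilon_t$, where $M^\epsilon_t = \int_0^t \one_{\set{Y^\epsilon_s>0}}\,dW_s$ is a martingale with $\langle M^\epsilon\rangle_t \leq t$, and $A^\epsilon_t = \frac{\alpha\epsilon}{2+\alpha\epsilon}\ell_t - L^{Y^\epsilon}_t(1)$ is of bounded variation. By DDS, the modulus of continuity of $M^\epsilon$ is dominated by that of a standard Brownian motion. The key observation is that $A^\epsilon$ is constant on any time interval during which $Y^\epsilon \in (0,1)$: the local time $\ell$ grows only at junction points, where $Y^\epsilon = 0$, while $L^{Y^\epsilon}(1)$ grows only when $Y^\epsilon = 1$. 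Consequently, on any such interval $[s,t]$ we have $Y^\epsilon_t - Y^\epsilon_s = M^\epsilon_t - M^\epsilon_s$, which already gives the required control.

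The remaining case is that of intervals $[s,t]$ with $|t-s|\leq \delta$ on which $Y^\epsilon$ visits $\set{0,1}$. Following the partition argument in Lemma~\ref{l:FCtightness}, let $\omega_{M^\epsilon}(\delta)$ denote the modulus of continuity of $M^\epsilon$ on $[0,T]$. On the event $\set{\omega_{M^\epsilon}(\delta) < 1/2}$, the process $Y^\epsilon$ cannot hit both $0$ and $1$ on the same $\delta$-interval; assuming (say) that it only hits $0$, the first and last visits $\eta_-,\eta_+\in[s,t]$ are well-defined, the intervals $[s,\eta_-]$ and $[\eta_+,t]$ lie in $\set{Y^\epsilon\in(0,1)}$, and the observation above yields $Y^\epsilon_s = M^\epsilon_s - M^\epsilon_{\eta_-}\leq \omega_{M^\epsilon}(\delta)$ and similarly for $Y^\epsilon_t$, so $|Y^\epsilon_t - Y^\epsilon_s|\leq 2\omega_{M^\epsilon}(\delta)$. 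Off the event $\set{\omega_{M^\epsilon}(\delta)<1/2}$ one uses the trivial bound $|Y^\epsilon_t - Y^\epsilon_s|\leq 1$. This yields tightness of $Y^\epsilon$ in $C([0,T];[0,1])$, and combined with the bound on $X^\epsilon$ the joint tightness of $Z^\epsilon$ follows. The proof is substantially simpler than that of Lemma~\ref{l:FCtightness}: no Neumann oscillation estimate is required because the graph geometry is explicit and the SDE reduces the problem directly to modulus bounds for time-changed Brownian motions; the only mild technicality is the hitting-time partitioning for $Y^\epsilon$.
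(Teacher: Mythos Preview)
Your proof is correct. For $X^\epsilon$ your argument is identical to the paper's: both apply the Dambis--Dubins--Schwarz theorem to the martingale $X^\epsilon$ with quadratic variation $\int_0^t \one_{\set{Y^\epsilon_s=0}}\,ds \leq t$ to dominate its modulus of continuity by that of a Brownian motion.

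For $Y^\epsilon$ the paper takes a slightly shorter route. Rather than splitting off the martingale part and handling the local-time terms by the hitting-time partition you borrow from Lemma~\ref{l:FCtightness}, the paper time-changes the \emph{entire} process $Y^\epsilon$ by $R(t)=\int_0^t \one_{\set{Y^\epsilon_s>0}}\,ds$ and observes (from~\eqref{e:sdeYep}) that $\bar B_t \defeq Y^\epsilon_{R^{-1}(t)}$ satisfies $d\bar B_t = d\tilde B_t + dL^{\bar B}_t(0) - dL^{\bar B}_t(1)$, i.e.\ is a doubly reflected Brownian motion on $[0,1]$. Since $R$ is $1$-Lipschitz, the modulus of continuity of $Y^\epsilon = \bar B\circ R$ is dominated by that of $\bar B$, and one is done. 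Your approach trades this structural observation for the more hands-on case analysis; it is a little longer but has the virtue of reusing the fat-comb template verbatim and of not requiring one to recognize that the $\ell$-term in~\eqref{e:sdeYep} collapses to the standard Skorokhod reflection after the time change.
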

\begin{lemma}\label{l:freidlin}
 Let $A$ be the generator \eqref{Adef}.  If $f \in \mathcal D(A)$, and $K \subseteq \Omega_0$ is compact as a subset of $\Rm^2$, then
  \begin{equation*}
    \lim_{\epsilon\to 0} \sup_{z \in K \cap \mathcal C_\epsilon} \E^{z} \paren[\Big]{
      f(Z^\epsilon_t) - f(Z_0) - \int_0^t Af(Z^\epsilon_s) \, ds
    }
    = 0
  \end{equation*}
\end{lemma}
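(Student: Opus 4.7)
The plan is to apply the graph It\^o formula of Lemma~\ref{l:graphIto} to $f \in \mathcal D(A)$, exploit the gluing condition built into $\mathcal D(A)$ to cancel the bulk part, and then absorb the remaining discrepancy with a small periodic corrector that exchanges spine occupation time for local time at the junctions. Since $f \in \mathcal D(A)$ is smooth in $x$ across each junction, $\partial_x^+ f = \partial_x^- f$ and the only junction contribution produced by Lemma~\ref{l:graphIto} is the $\alpha\epsilon\,\partial_y f$ piece. Taking expectations and subtracting $\E^z\int_0^t Af(Z^\epsilon_s)\,ds = \tfrac{1}{2}\E^z\int_0^t \partial_y^2 f(Z^\epsilon_s)\,ds$ gives
\begin{equation*}
 \E^z\brak[\Big]{f(Z^\epsilon_t) - f(z) - \int_0^t Af(Z^\epsilon_s)\,ds} = \tfrac{1}{2}\E^z\!\!\int_0^t (\partial_x^2 f - \partial_y^2 f)(X^\epsilon_s,0)\one_{\{Y^\epsilon_s = 0\}}\,ds + \tfrac{\alpha\epsilon}{2+\alpha\epsilon}\E^z\!\!\int_0^t \partial_y f(X^\epsilon_s,0)\,d\ell_s.
\end{equation*}
The gluing condition $\partial_x^2 f(x,0) - \partial_y^2 f(x,0) = -\alpha \partial_y f(x,0)$ built into $\mathcal D(A)$ reduces the first integrand to $-\alpha \partial_y f(x,0)$, so, writing $g \defeq \partial_y f(\cdot, 0) \in C^2_b(\R)$, the lemma reduces to the occupation-time/local-time balance
\begin{equation*}
 \E^z\!\!\int_0^t g(X^\epsilon_s)\one_{\{Y^\epsilon_s = 0\}}\,ds = \tfrac{2\epsilon}{2+\alpha\epsilon}\E^z\!\!\int_0^t g(X^\epsilon_s)\,d\ell_s + O(\epsilon),
\end{equation*}
where the error depends only on $\|g\|_{C^2}$, $\alpha$, and $t$; indeed substituting this display back collapses the previous one to $O(\epsilon)$.

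To establish the balance I would use the periodic corrector $\chi\colon\R\to\R$ of period $\epsilon$, defined on $[-\epsilon/2, \epsilon/2]$ by $\chi(x) \defeq \tfrac{1}{2}x^2 - \tfrac{\epsilon}{2}|x|$. A direct check yields $\chi(\epsilon k) = 0$, $\chi''\equiv 1$ off $\epsilon\Z$, the junction jump $\chi'(\epsilon k^+) - \chi'(\epsilon k^-) = -\epsilon$, and the bounds $\|\chi\|_\infty = O(\epsilon^2)$, $\|\chi'\|_\infty = O(\epsilon)$. Because $\chi$ vanishes at every junction, the test function $\Phi^\epsilon$ equal to $g(x)\chi(x)$ on the spine and identically zero on the teeth is continuous on $\mathcal C_\epsilon$ and has all its one-sided derivatives at junctions. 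Applying Lemma~\ref{l:graphIto} to $\Phi^\epsilon$, using $\partial_y \Phi^\epsilon \equiv 0$, $\partial_x^2 \Phi^\epsilon = g + 2g'\chi' + g''\chi = g + O(\epsilon)$ in sup norm, and the junction jump $\partial_x^+\Phi^\epsilon - \partial_x^-\Phi^\epsilon = -\epsilon g$, one obtains after taking expectation
\begin{equation*}
 \E^z[\Phi^\epsilon(Z^\epsilon_t) - \Phi^\epsilon(z)] = \tfrac{1}{2}\E^z\!\!\int_0^t g(X^\epsilon_s)\one_{\{Y^\epsilon_s=0\}}\,ds - \tfrac{\epsilon}{2+\alpha\epsilon}\E^z\!\!\int_0^t g(X^\epsilon_s)\,d\ell_s + O(\epsilon t),
\end{equation*}
and the bound $\|\Phi^\epsilon\|_\infty = O(\epsilon^2)$ then rearranges into the claimed balance.

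The only delicate point is verifying that $\Phi^\epsilon$ genuinely satisfies the hypotheses of Lemma~\ref{l:graphIto} — continuity on $\mathcal C_\epsilon$ and existence of all one-sided derivatives at every junction — which is precisely why $\chi$ is chosen to vanish at $\epsilon\Z$ and $\Phi^\epsilon$ is extended by zero on the teeth. No tightness or PDE estimates are required. All implicit constants depend only on $\|f\|_{C^3(\Omega_0)}$, $\alpha$, and $t$ — never on the starting point $z$ — so the convergence in Lemma~\ref{l:freidlin} is automatically uniform over $z \in K \cap \mathcal C_\epsilon$.
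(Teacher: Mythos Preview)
Your argument is correct in spirit and is, in fact, cleaner than the paper's. Both proofs reduce the statement to the same occupation-time/local-time balance on the spine and both prove that balance via a quadratic corrector. The difference is in the execution: you use a single \emph{periodic} corrector $\chi$ and apply the graph It\^o formula (Lemma~\ref{l:graphIto}) directly to $\Phi^\epsilon = g\chi$ on~$\mathcal C_\epsilon$, which handles all junctions simultaneously and yields an error that is uniform over \emph{all} starting points (so compactness of~$K$ is never used). The paper instead time-changes $X^\epsilon$ to a Brownian motion~$W$ on~$\R$, applies the one-dimensional It\^o--Tanaka formula with a single-junction corrector~$f_\epsilon$, and then sums over~$k$; this forces a separate uniform-continuity step (the term~$I$) and a tail truncation $|\epsilon k|\le M$ to control the infinite sum. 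Your route avoids both of these.

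There is one small gap. You assert $g \defeq \partial_y f(\cdot,0) \in C^2_b(\R)$, but $\mathcal D(A)$ only requires $f \in C^2_b(\Omega_0)$, which guarantees $g\in C^1_b(\R)$, not $C^2_b$. Your computation $\partial_x^2\Phi^\epsilon = g + 2g'\chi' + g''\chi$ therefore needs $g''$ to exist and be bounded, and indeed Lemma~\ref{l:graphIto} requires $\Phi^\epsilon$ to be smooth off~$J_\epsilon$. (Your final remark that the constants depend on~$\|f\|_{C^3}$ tacitly acknowledges this.) The fix is easy: either note that $C^3_b\cap\mathcal D(A)$ is a core for~$A$, so it suffices to verify the estimate there; or mollify $g$ to $g_\delta\in C^\infty_b$, run your corrector argument with~$g_\delta$, and control the replacement error using $\|g-g_\delta\|_\infty\to 0$ together with the bound $\epsilon\,\E^z\ell_t = O(t)$ (which your own corrector with $g\equiv 1$ already delivers). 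The paper's approach, by contrast, needs only uniform continuity of~$g$ on compacts, since its corrector carries constant coefficients $g(\epsilon k)$ on each interval.
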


\begin{proof}[Proof of Theorem~\ref{t:zlim}]
  Using Lemmas~\ref{l:tightness} and~\ref{l:freidlin} as replacements for Lemmas~\ref{l:FCtightness} and~\ref{l:FCgenerator} respectively, the proof of Theorem~\ref{t:zlim} is identical to that of Theorem~\ref{t:zlimfat}.
\end{proof}

The remainder of this section is devoted to proving Lemmas~\ref{l:tightness} and~\ref{l:freidlin}.

\begin{proof}[Proof of Lemma~\ref{l:tightness}]
  We write both $X^\epsilon$ and $Y^\epsilon$ as time-changed Brownian motions as follows. Let $S(t) = \int_0^t \one_{\set{Y^\epsilon_s = 0}} \,ds$. Then letting $S^{-1}(t)$ be the right-continuous inverse, by the Dambis-Dubins-Schwartz time change theorem (see for instance~\cite[Section 3.4.B]{KaratzasShreve91}), $\bar W_t = X^\epsilon_{S^{-1}(t)}$ is a Brownian motion and $X^\epsilon_t = \bar W_{S(t)}$.
Similarly we can time change $Y^\epsilon$ using $R(t) = \int_0^t \one_{\set{Y_t^\epsilon > 0}} \,ds$. Equation~\eqref{e:sdeYep} tells us that $\bar B_t = Y^\epsilon_{R^{-1}(t)}$ satisfies
\begin{equation*}
d\bar B_t = d \tilde B_t + dL^{\bar B}_t(0) - dL^{\bar B}_t(1) \, .
\end{equation*}
where $\tilde B_t$ is a Brownian motion and hence $\bar B_t$ is a doubly-reflected Brownian motion on $[0,1]$ such that $Y^\epsilon_t = \bar B_{R(t)}$.
Since $S(t) - S(s) \leq t - s$ and $R(t) - R(s) \leq t - s$ holds with probability one,  the moduli of continuity of $X^\epsilon$ and $Y^\epsilon$ over $[0,T]$ are no more than those of $\bar W$ and $\bar B$ over $[0,T]$, respectively.
This implies tightness.
\end{proof}
\begin{proof}[Proof of Lemma~\ref{l:freidlin}]
  We claim for any $k \in \N$ we have
\begin{equation*}
L^{Z^\epsilon}(\epsilon k, 0) =
 L^{X^\epsilon}(\epsilon k,0) + L^{Y^\epsilon}(\epsilon k, 0) \,,
 \quad\text{and}\quad
 L^{Y^\epsilon}(\epsilon k, 0) = \frac{\alpha\epsilon}{2} L^{X^\epsilon}(\epsilon k, 0) \,.
\end{equation*}
The first equality is immediate from the definition, and the second equality is proved in~\cite{FreidlinSheu00}.
(The second equality can also be deduced the independent excursion construction in Section~\ref{s:excursion}, below).
Consequently
\begin{equation}\label{e:LtZ}
  L^{Z^\epsilon}(\epsilon k, 0)
    = \frac{2 + \alpha\epsilon}{2} L^{X^\epsilon}(\epsilon k, 0)
    = \frac{2 + \alpha\epsilon}{\alpha\epsilon} L^{Y^\epsilon}(\epsilon k, 0) \,.
\end{equation}
For any $f\in \mathcal{D}(A)$, Lemma~\ref{l:graphIto} gives
\begin{multline}\label{e:fZt1}
f(Z^\epsilon_t) - f(Z^\epsilon_0) = \int_0^t \p_yf(Z^\epsilon_s)\one_{\{Y_s^\epsilon >0\}} \, dY^\epsilon_s
+ \int_0^t \p_xf(Z^\epsilon_s)\one_{\{Y_s^\epsilon = 0\}} \, dX^\epsilon_s \\+ \int_0^t \frac{1}{2}\p^2_yf(Z^\epsilon_s)\one_{\{Y_s^\epsilon >0\}} + \frac{1}{2}\p^2_xf(Z^\epsilon_s)\one_{\{Y_s^\epsilon = 0\}}\, ds \\
+ \sum_{k \in \Z} \paren[\Big]{
  \frac{\alpha\epsilon}{2 + \alpha\epsilon}\p_yf(\epsilon k,0)
  + \frac{1}{2 + \alpha\epsilon} \paren[\big]{
    \p_x^+f(\epsilon k, 0) - \p_x^-f(\epsilon k, 0)
  }}
L^{Z^\epsilon}_t(\epsilon k, 0)\,.
\end{multline}
The first integral on the right of equation~\eqref{e:fZt1} can be rewritten as
\begin{equation*}
  \begin{split}
    \int_0^t \p_yf(Z^\epsilon_s)\one_{\{Y_s^\epsilon >0\}} \, dY^\epsilon_s
     &=
     \int_0^t \p_yf(Z^\epsilon_s)\one_{\{Y_s^\epsilon >0\}} \, dW_s
     - \int_0^t \p_yf(X^\epsilon_s,1) \, dL^{Y^\epsilon}_s(1)
    \\
    &= \int_0^t\p_yf(Z^\epsilon_s)\one_{\{Y_s^\epsilon >0\}} \, dW_s \,.
 \end{split}
\end{equation*}
Here we used the fact that $\partial_y f(x, 1) = 0$ for any $f \in \mathcal{D}(A)$.

Returning to~\eqref{e:fZt1}, we note that $f\in C^2(\R \times\{0\})$ implies $\p_x^+f(\epsilon k, 0)=\p_x^-f(\epsilon k, 0)$.
Thus for $(x,y) \in K\cap \mathcal C_\epsilon $, taking expectations on both sides and using~\eqref{e:LtZ} gives
\begin{align*}
  \MoveEqLeft
  \E^{(x,y)} \paren[\Big]{ f(Z^\epsilon_t) - f(Z^\epsilon_0) -\int_0^t Af(Z^\epsilon_s) \, ds }\\
  &=\frac{1}{2} \E^{(x,y)} \Bigl( \int_0^t \p^2_yf(Z^\epsilon_s)\one_{\{Y_s^\epsilon >0\}} + \p^2_xf(Z^\epsilon_s)\one_{\{Y_s^\epsilon = 0\}} - \p^2_yf(Z^\epsilon_s)\, ds\\
  &\qquad\qquad+ \epsilon \sum_{k \in \Z} \p_yf(\epsilon k,0) L^{X^\epsilon}_t(\epsilon k, 0) \Bigr)\\
  &=\frac{\alpha}{2} \E^{(x,y)} \paren[\Big]{-\int_0^t \p_yf(X_s^\epsilon,0)\one_{\{Y^\epsilon_s = 0\}} \, ds + \epsilon \sum_{k \in \Z} \p_yf(\epsilon k,0) L^{X^\epsilon}_t(\epsilon k, 0)}
  \\
  &= I + \II\,,
\end{align*}
where
\begin{gather*}
  I \defeq \frac{\alpha}{2} \sum_{k \in \Z}\E^{(x,y)}
    \int_0^t\paren[\big]{
      \p_yf(\epsilon k, 0)
      - \p_yf(X_s^\epsilon, 0)
    }
    \one_{\{Y_s^\epsilon = 0,\; \abs{X^\epsilon_s - \epsilon k} < \frac{\epsilon}{2}\}}\, ds \,,
  \\
  \II \defeq \frac{\alpha}{2} \sum_{k \in \Z}
    \p_yf(\epsilon k, 0)
    \E^{(x,y)}\paren[\Big]{
      \epsilon L^{X^\epsilon}_t
      - \int_0^t\one_{\{Y_s^\epsilon = 0,\;\abs{X^\epsilon_s - \epsilon k} < \frac{\epsilon}{2}\}}\, ds
    }\,.
\end{gather*}
Note that there exists Brownian motion $W$ such that $X^\epsilon_t = W_{S(t)}$ 
where $S(t)$, defined by
\begin{equation*}
	S(t)
	\defeq \int_0^t \one_{\set{Y^\epsilon(s) = 0}} \, ds\,,
\end{equation*}
is the amount of time the joint process spends on the spine of the comb up to time~$t$.
To estimate $I$, for any $\delta > 0$ we choose sufficiently large compact set $C \subset \mathbb{R}$ such that
\begin{equation*}
\sup_{(x,y) \in K}\E^x\paren[\Big]{\int_0^t\one_{\set{W_s \notin C}} \, ds} < \frac{\delta}{\norm{\partial_y f}_\infty} \, .
\end{equation*}
Then since $S(s) \leq s$, it follows that
\begin{equation*}
	\Pm^x(X^\epsilon_s \notin C) \leq \Pm^x(W_s \notin C)
\end{equation*}
and so the above estimate can be applied for $X^\epsilon$ independent of $\epsilon$.
Then use uniform continuity of $\partial_y f$ in $C$ along with the above estimate.

In order to estimate $\II$, we again use the above representation to see
\begin{multline}\label{e:II2}
  \E^{(x,y)}\abs[\Big]{
    \epsilon L^{X^\epsilon}_t(\epsilon k, 0) - \int_0^t\one_{\{Y_s^\epsilon = 0,\;\abs{X^\epsilon_s - \epsilon k} < \frac{\epsilon}{2}\}}\, ds
  }
  \\
  =\E^{x} \abs[\Big]{
    \epsilon L^{W}_{S(t)}(\epsilon k) - \int_0^{S(t)}\one_{\{\abs{W_s - \epsilon k} < \frac{\epsilon}{2}\}}\, ds
  }
  \,,
\end{multline}
where $S(t)$, defined by
\begin{equation*}
  S(t)
    \defeq \int_0^t \one_{\set{Y^\epsilon(s) = 0}} \, ds\,,
\end{equation*}
is the amount of time the joint process spends on the spine of the comb up to time~$t$.
Thus to show $\II \to 0$, it suffices to estimate the right hand side of~\eqref{e:II2} as $\epsilon \to 0$. Also, by shifting the indices of the sum to compensate, we can assume that $x = 0$.

To this end, let $f_\epsilon$ be defined by
\begin{equation*}
 f_\epsilon(x) \defeq
 \begin{dcases}
 \epsilon(\epsilon k - x) -\frac{\epsilon^2}{4} & \text{ if } x < \epsilon k - \frac{\epsilon}{2}  \;,\\
 (x - \epsilon k)^2 & \text{ if } \epsilon k - \frac{\epsilon}{2}\leq x \leq \epsilon k + \frac{\epsilon}{2}  \;,\\
 \epsilon(x -\epsilon k ) -\frac{\epsilon^2}{4} & \text{ if } x > \epsilon k + \frac{\epsilon}{2}  \;.
 \end{dcases}
\end{equation*}
By Ito's formula we have,
\begin{multline*}
f_\epsilon(W_t) - \epsilon\abs{W_t - \epsilon k} - (f_\epsilon(W_0) - \epsilon\abs{W_0 - \epsilon k}) \\
= \int_0^t (f_\epsilon'(W_s) -\epsilon\, \text{sign}(W_s - \epsilon k)) \, dW_s + \int_0^t\one_{\{\abs{W_s - \epsilon k} < \frac{\epsilon}{2}\}}\, ds - \epsilon L^{W}_t(\epsilon k) \;.
\end{multline*}
Using the It\^o isometry and the inequalities
\begin{gather*}
  \abs[\big]{f_\epsilon(x) - \epsilon \abs{x - \epsilon k}}
    \leq  \frac{\epsilon^2}{4}\,,
  \\
  \abs{f'_\epsilon(x) - \epsilon\, \text{sign}(x - \epsilon k)} \leq  \epsilon \one_{[\epsilon k -\frac{\epsilon}{2},\epsilon k + \frac{\epsilon}{2}]} \,,
\end{gather*}
we obtain

\begin{multline*}
  \E^{0}\abs[\Big]{\epsilon L^{W}_t(\epsilon k) - \int_0^t\one_{\{\abs{W_s - \epsilon k} < \frac{\epsilon}{2}\}}\, ds }
    \leq \frac{\epsilon^2}{4} + \epsilon\left(\E^0 \int_0^t\one_{\{\abs{W_s - \epsilon k} < \frac{\epsilon}{2}\}} \,ds\right)^{\frac{1}{2}}
    \\
    \leq c(t)\epsilon^{\frac{3}{2}} \,,
\end{multline*}
since
\begin{equation*}
\E^0 \int_0^t\one_{\{\abs{W_s - \epsilon k} < \frac{\epsilon}{2}\}} \,ds = \int_0^t \P^0 \paren[\Big]{\abs{W_s - \epsilon k} < \frac{\epsilon}{2}} \, ds \leq c\int_0^t\frac{\epsilon}{\sqrt{s}} \, ds = 2c \epsilon \sqrt{t}\,.
\end{equation*}
We break up the sum in $\II$ and estimate as follows,
\begin{equation*}
  \II \leq \norm{\partial_y f}_\infty\paren[\Big]{\sum_{\abs{k} > N / \epsilon}\Em^0[\epsilon L_t^{X^\epsilon}(\epsilon k,0)] + \int_0^t \Pm^0\paren[\big]{\abs{X^\epsilon_s} > N - \frac{\epsilon}{2}} \,ds  + \frac{2 N }{\epsilon}c(t)\epsilon^{\frac{3}{2}}} \,.
\end{equation*}
We can again use that $X^\epsilon$ has the same distribution as a Brownian motion with a time change $S(t) \leq t$ to replace $X^\epsilon$ with $W$, i.e.
\begin{equation*}
  \II \leq \norm{\partial_y f}_\infty\paren[\Big]{\sum_{\abs{k} > N / \epsilon}\Em^0[\epsilon L_t^{W}(\epsilon k)] + \int_0^t \Pm^0\paren[\big]{\abs{W_s} > N - \frac{\epsilon}{2}} \,ds  + Nc(t)\epsilon^{\frac{1}{2}}} \,.
\end{equation*}
Setting $N$ sufficiently large and then sending $\epsilon \to 0$ gives us $\II\to 0$ as $\epsilon \to 0$.
This completes the proof.
\end{proof}

\section{Excursion Description on the Comb Graph.}\label{s:excursion}

In this section we describe the how diffusion~$Z^\epsilon$ on the comb-shaped graph~$\mathcal C_\epsilon$ (defined in Section~\ref{s:ithincomb}) can be constructed from the point of view of It\^o's excursion theory (c.f.~\cite{Ito72, PitmanYor07}).
We identify the components of~$Z^\epsilon$ as a trapped Brownian motion in the framework of Ben Arous \etal~\cite{BenArousCabezasEA15}, and use this to provide an alternate description of the limiting behavior as~$\epsilon \to 0$.

\subsection{The Excursion Decomposition of~\texorpdfstring{$Z^\epsilon$}{Z-epsilon}.}

The trajectories of $Z^\epsilon$ can be decomposed as a sequence of excursions where each excursion starts and ends at the junction points $J_\epsilon = \epsilon \Z \times \{0\}$, and travels entirely in the teeth, or entirely in the spine.
The excursions into the teeth of the comb (excursions of $Y^\epsilon$ into $(0,1]$ while $X^\epsilon \in \epsilon \Z$) should be those of a reflected Brownian motion on $[0,1]$.
The excursions into the spine (excursions of $X^\epsilon$ into $\Rm \setminus \epsilon \Z$ with $Y^\epsilon = 0$) should be those of a standard Brownian motion on $\Rm$ between the points $\epsilon \Z$.
Thus one expects that that by starting with a standard Brownian motion $\bar X$ on $\Rm$ and an independent reflected Brownian motion $\bar Y$ on $[0,1]$, we can glue excursions of $\bar X$ and $\bar Y$ appropriately and obtain the diffusion $Z^\epsilon$ on the comb-shaped graph~$\mathcal C_\epsilon$.
We describe this precisely as follows.

Let $\bar X$ be a standard Brownian motion on $\Rm$ and let $L^{\bar X}_t(x)$ denote its local time at~$x \in \Rm$.
Let $L_t^{\bar X}(\epsilon \Z)$, defined by
\begin{equation*}
  L^{\bar X}_t(\epsilon \Z) \defeq \sum_{k \in \Z} L^{\bar X}_t(\epsilon k) =  
\lim_{\delta \to 0} \frac{1}{2 \delta} \int_0^t \sum_{k \in \Z} \one_{(\epsilon k-\delta, \epsilon k+\delta)}(\bar X_s) \,ds \,,
\end{equation*}
denote the local time of $\bar X$ at the junction points $\epsilon \Z$.
Let $\tau^{\bar X, \epsilon}$ be the right-continuous inverse of $L^{\bar X}_t(\epsilon \Z)$ defined by
\begin{equation*}
\tau^{\bar X,\epsilon}(\ell) = \inf \set[\big]{ t > 0 \st L^{\bar X}_t(\epsilon \Z) > \ell }, \quad \ell \geq 0.
\end{equation*}
Notice that the functions $t \mapsto L^{\bar X}_t$ and $\ell \mapsto \tau^{\bar X, \epsilon}(\ell)$ are both non-decreasing.

Let $\bar Y$ be a reflected Brownian motion on $[0,1]$ which is independent of $\bar X$.  As above, let $L^{\bar Y}(0)$ be the local time of $\bar Y$ about $0$, and let $\tau^{\bar Y}$, defined by
\[
\tau^{\bar Y}(\ell) = \inf \set[\big]{ t > 0 \st L^{\bar Y}_t(0) > \ell }\,,
\]
be its right-continuous inverse.
Given $\alpha \in (0,1)$, we define the random time-changes $\psi^{\bar X, \epsilon}$ and $\psi^{\bar Y, \epsilon}$ by
\begin{equation}\label{e:psiXdef}
  \psi^{\bar X,\epsilon}(t)
    = \inf \set[\big]{
	s > 0 \st
	s + \tau^{\bar Y}\paren[\Big]{
	  \frac{\alpha \epsilon}{2} L^{\bar X}_s(\epsilon \Z)
	}
	> t
      }\,,
\end{equation}
and
\begin{equation}\label{e:epsiYdef}
  \psi^{\bar Y, \epsilon}(t)
    = \inf \set[\big]{
	s > 0 \st
	s + \tau^{\bar X,\epsilon}\paren[\Big]{
	  \frac{2}{\alpha \epsilon} L^{\bar Y}_s(0)
	}
	> t
      } \,.
\end{equation}
Note both $\psi^{\bar X, \epsilon}$ and~$\psi^{\bar Y, \epsilon}$ are continuous and non-decreasing functions of time.

\begin{proposition}\label{p:Ztimechange}
The time-changed process $Z^\epsilon$ defined by
\[
Z^\epsilon(t) \defeq \paren[\big]{ \bar X(\psi^{\bar X,\epsilon}(t)), \bar Y( \psi^{\bar Y,\epsilon}(t)) }
\]
is the same process~$Z^\epsilon$ in Theorem~\ref{t:zlim}.
Namely it is a Markov process with generator $\mathcal{L}^\epsilon$ (defined in equation~\eqref{e:Lep}), and is a weak solution of the system~\eqref{e:sdeXep}--\eqref{e:sdeYep}.
\end{proposition}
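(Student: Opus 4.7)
The plan is to exhibit the time-changed process as a weak solution of the SDE system~\eqref{e:sdeXep}--\eqref{e:sdeYep}, and then appeal to weak uniqueness for that system (equivalently, uniqueness of the martingale problem for $\mathcal L^\epsilon$). The first task is to unravel the book-keeping of the two time changes. Set $\phi^{\bar X,\epsilon}(s) = s + \tau^{\bar Y}(\frac{\alpha\epsilon}{2} L^{\bar X}_s(\epsilon \Z))$ and $\phi^{\bar Y,\epsilon}(s) = s + \tau^{\bar X,\epsilon}(\frac{2}{\alpha\epsilon} L^{\bar Y}_s(0))$, so that $\psi^{\bar X,\epsilon}$ and $\psi^{\bar Y,\epsilon}$ are their right-continuous inverses. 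Interpreting $\psi^{\bar X,\epsilon}(t)$ as the total time spent on the spine and $\psi^{\bar Y,\epsilon}(t)$ as the total time spent in the teeth, I would first verify the partition identity $\psi^{\bar X,\epsilon}(t)+\psi^{\bar Y,\epsilon}(t)=t$, together with the support properties $\mathrm{supp}\,d\psi^{\bar X,\epsilon} \subseteq \{s : Y^\epsilon_s = 0\}$ and $\mathrm{supp}\,d\psi^{\bar Y,\epsilon} \subseteq \{s : X^\epsilon_s \in \epsilon\Z\}$. These are consequences of the fact that $\tau^{\bar X,\epsilon}$ and $\tau^{\bar Y}$ grow only on the null sets where the corresponding local times grow.

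The next step is to derive the crucial local-time balance at the junctions,
\begin{equation*}
  L^{Y^\epsilon}_t(0) \;=\; \frac{\alpha\epsilon}{2}\,L^{X^\epsilon}_t(J_\epsilon),
\end{equation*}
which can be read directly from the definitions~\eqref{e:psiXdef}--\eqref{e:epsiYdef} using the identities $L^{X^\epsilon}_t(J_\epsilon) = L^{\bar X}_{\psi^{\bar X,\epsilon}(t)}(\epsilon\Z)$ and $L^{Y^\epsilon}_t(0) = L^{\bar Y}_{\psi^{\bar Y,\epsilon}(t)}(0)$, combined with the partition identity. This is the step that encodes the prescribed entrance probability $\alpha\epsilon/(2+\alpha\epsilon)$ into the teeth, and everything else follows from it.

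With this in place I would construct the driving Brownian motion $W$ via Dambis--Dubins--Schwarz. Because $\bar X$ and $\bar Y$ are independent and $\psi^{\bar X,\epsilon}$, $\psi^{\bar Y,\epsilon}$ are continuous, nondecreasing, and have disjoint supports by the previous paragraph, the processes $M^X_t = \int_0^{\psi^{\bar X,\epsilon}(t)} d\bar X$ and $M^Y_t = \int_0^{\psi^{\bar Y,\epsilon}(t)} \one_{\{\bar Y_s \in (0,1)\}} d\bar Y$ are orthogonal continuous martingales in the natural enlarged filtration, with quadratic variations $\psi^{\bar X,\epsilon}(t)$ and $\psi^{\bar Y,\epsilon}(t)$. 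Their sum therefore has quadratic variation $t$, defining a Brownian motion $W$. Tanaka's formula applied to $\bar Y$ gives $d\bar Y = d\tilde B + dL^{\bar Y}(0) - dL^{\bar Y}(1)$, and composing with $\psi^{\bar Y,\epsilon}$ and using the local-time identity yields
\begin{align*}
  dX^\epsilon_t &= \one_{\{Y^\epsilon_t = 0\}}\,dW_t,\\
  dY^\epsilon_t &= \one_{\{Y^\epsilon_t > 0\}}\,dW_t + dL^{Y^\epsilon}_t(0) - dL^{Y^\epsilon}_t(1),
\end{align*}
with the local-time balance turning $dL^{Y^\epsilon}_t(0) = \tfrac{\alpha\epsilon}{2}\,dL^{X^\epsilon}_t(J_\epsilon) = \tfrac{\alpha\epsilon}{2+\alpha\epsilon}\,d\ell_t$ where $\ell_t = L^{Z^\epsilon}_t(J_\epsilon) = (1 + \tfrac{\alpha\epsilon}{2})L^{X^\epsilon}_t(J_\epsilon)$. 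This is precisely~\eqref{e:sdeXep}--\eqref{e:sdeYep}.

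Finally, since the time-changed process spends zero Lebesgue time at the junctions (the set where both $Y^\epsilon_t = 0$ and $X^\epsilon_t \in \epsilon\Z$ has measure zero because $\bar X$ spends zero time at $\epsilon\Z$), weak uniqueness for the SDE system applies and identifies the law of this process with the Feller process having generator $\mathcal L^\epsilon$. Equivalently, one may apply It\^o's formula via Lemma~\ref{l:graphIto} to $f \in \mathcal D(\mathcal L^\epsilon)$ and observe that the flux condition~\eqref{e:flux1} together with the local-time balance cancels the local-time contribution at $J_\epsilon$, producing the martingale $f(Z^\epsilon_t) - \int_0^t \mathcal L^\epsilon f(Z^\epsilon_s)\,ds$. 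The principal technical obstacle will be paragraph three: the construction of $W$ and the careful verification that the time-changed integrals against $\bar X$ and $\bar Y$ remain (orthogonal) martingales in a common filtration, given that $\psi^{\bar X,\epsilon}$ and $\psi^{\bar Y,\epsilon}$ mix the two independent clocks through their inverse local times.
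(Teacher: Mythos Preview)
Your proposal is correct and follows essentially the same route as the paper: the partition identity $\psi^{\bar X,\epsilon}(t)+\psi^{\bar Y,\epsilon}(t)=t$ and the local-time balance $L^{Y^\epsilon}_t(0)=\tfrac{\alpha\epsilon}{2}L^{X^\epsilon}_t(J_\epsilon)$ are exactly the paper's Lemma~\ref{l:claimLtratio}, and the orthogonality of the two martingale parts is the paper's Lemma~\ref{l:jqvXY}. The only organizational differences are that the paper (i) proves $\langle X^\epsilon,Y^\epsilon\rangle=0$ by a stopping-time decomposition (repeatedly splitting $[0,T]$ into intervals where one of the two processes is constant, then shrinking the residual set) rather than by an appeal to disjoint supports of $d\psi^{\bar X,\epsilon}$ and $d\psi^{\bar Y,\epsilon}$, and (ii) skips the explicit DDS construction of $W$ and instead applies It\^o's formula directly to compute the generator, reading off the SDEs afterward from $f(x,y)=x$ and $f(x,y)=y$; your ``equivalently'' clause in the last paragraph is in fact what the paper does.
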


This gives an alternate and natural representation of $Z^\epsilon = (X^\epsilon,Y^\epsilon)$.  One can view this time-change representation as the pre-limit analogue of the representation~\eqref{e:limitdef1} for the limit system~\eqref{e:sdeX} -- \eqref{e:localtime}.
For clarity of presentation, we postpone the proof of Proposition~\ref{p:Ztimechange} to Section~\ref{s:Ztimechange}.

\begin{remark}
  For simplicity, throughout this section we assume the initial distribution of~$Z^\epsilon$ is~$\delta_{(0,0)}$, and denote expectations using the symbol $\E$ without any superscript.
  The main results here (in particular Theorem~\ref{t:XYconv}, below) can directly be adapted to the situation for more general initial distributions as in Theorem~\ref{t:zlim}.
\end{remark}

\subsection{Description as a Trapped Brownian Motion.}\label{s:tbm}
We now show how this representation can be explained in the framework of trapped Brownian motions as defined by Ben Arous, \etal~\cite{BenArousCabezasEA15} (see Definition 4.11 therein).  Recall that a trapped Brownian motion, denoted by $B[\mu]$, is a process of the form $B(\psi(t))$ where $B(t)$ is a standard Brownian motion and the time-change $\psi$ has the form
\[
\psi(t) = \inf \set[\big]{ s > 0 \st \phi[\mu,B]_s > t }\,,
\]
where
\[
\phi[\mu,B]_s = \mu \left(\{ (x,\ell) \in \Rm \times [0,\infty) \;|\; L^B(x,s) \geq \ell \} \right) \,,
\]
and $\mu$ is a (random) measure on $\Rm \times [0,\infty)$ called the trap measure.
For example, when $\mu$ is the Lebesgue measure on $\Rm \times [0,\infty)$, then $\phi[\mu,B] = t$, and $\psi(t) = t$.
Alternately, if $\mu$ has an atom at $(x,\ell)$ of mass $r > 0$, then $B(\psi(t))$ is trapped at $x$ for a time $r$ at the moment its local time at $x$ exceeds $\ell$.

To use this framework in our scenario, we need to identify a trap measure under which $X^\epsilon$ is a trapped Brownian motion.
We do this as follows.
First note that the process $\tau^{\bar Y}_\ell$, appearing in the time change~\eqref{e:psiXdef}, is a L\'evy subordinator.
Thus,
there exists a function $\eta^{\bar Y}(s):(0,\infty) \to (0,\infty)$, and a Poisson random measure $N^{\bar Y}$ on $[0,\infty) \times [0,\infty)$ with intensity measure $d\ell \times \eta^{\bar Y}(s) \, ds$,
such that
\begin{align}\label{tauPoisson}
\tau^{\bar Y}_\ell = \int_{[0,\ell]} \int_{[0,\infty)}  s N^{\bar Y} (d\ell \times ds)\,.
\end{align}
In the definition of $\psi^{\bar X,\epsilon}(t)$ above, we have 
\[
\tau^{\bar Y}\paren[\Big]{ \frac{\alpha \epsilon}{2} L^{\bar X}_s( \epsilon \Z) } = \tau^{\bar Y}\paren[\Big]{ \sum_{k \in \Z} \frac{\alpha \epsilon}{2} L^{\bar X}_s(\epsilon k) }.
\]
Because $\tau_\ell^{\bar Y}$ has stationary, independent increments, this is equal in law to
\begin{align*}
\tau^{\bar Y}\paren[\Big]{ \frac{\alpha \epsilon}{2} L^{\bar X}_s( \epsilon \Z) } \stackrel{d}{=} \sum_{k \in \Z} \tau^{\bar Y_k}\paren[\Big]{  \frac{\alpha \epsilon}{2} L^{\bar X}_s(\epsilon k) },
\end{align*}
where $\{ \bar Y_k\}_{k \in \Z}$ are a family of independent reflected Brownian motions on $[0,1]$.  That is, the time change $\psi^{\bar X,\epsilon}(t)$ has the same law as
\begin{align}\label{psihatdef}
\tilde \psi^{\bar X,\epsilon}(t) = \inf \set[\big]{ s > 0 \st s + \sum_{k \in \Z} \tau^{\bar Y_k}\paren[\Big]{  \frac{\alpha  \epsilon}{2} L^{\bar X}_s(\epsilon k) }  > t }\,.
\end{align}
Each of the processes $\tau^{\bar Y_k}$ can be represented as in~\eqref{tauPoisson} with independent Poisson random measures $N^{\bar Y_k}$:
\begin{align}
\tau^{\bar Y_k}_\ell = \int_{[0,\ell]} \int_{[0,\infty)}  s N^{\bar Y_k} (d\ell \times ds). \label{tauPoisson2}
\end{align}
Since each of the random measures $N^{\bar Y_k}$ is atomic, we may define $\{(\ell_{j,k}, s_{j,k})\}_{j=1}^\infty$ to be the random atoms of $N^{\bar Y_k}$ by
\begin{align}
N^{\bar Y_k} = \sum_{j = 1}^\infty \delta_{(\ell_{j,k}, s_{j,k})}. \label{tauPoisson3}
\end{align}
Then define a random measure on $\Rm \times [0,\infty)$:
\begin{align}
\mu^{\bar X,\epsilon} = dx \times d\ell + \sum_{k \in \Z} \sum_{j = 1}^\infty s_{j,k} \delta_{( \epsilon k, (2/ (\alpha \epsilon)) \ell_{j,k})} \label{mutrapx}
\end{align}
Returning to~\eqref{psihatdef}, we now have the representation
\[
s + \sum_{k \in \Z} \tau^{\bar Y_k}\left(  \frac{\alpha \epsilon}{2} L^{\bar X}_s(\epsilon k) \right) = \mu^{\bar X,\epsilon} \left( \{ (x,\ell) \in \Rm \times [0,\infty) \;|\; \ell \leq L^{\bar X}_s(x) \}\right).
\]
It is easy to check that $\mu^{\bar X}$ defines a L\'evy trap measure, in the sense of \cite{BenArousCabezasEA15}, Definition 4.10. This proves the following:
\begin{proposition}
Let $\bar X$ be a standard Brownian motion on $\Rm$ and let $\bar X[\mu^{\bar X,\epsilon}]$ be the trapped Brownian motion (see Definition 4.11 of \cite{BenArousCabezasEA15}) with trap measure $\mu^{\bar X,\epsilon}$ defined by~\eqref{mutrapx}.  Then the law of $X^\epsilon$ coincides with the law of $\bar X[\mu^{\bar X,\epsilon}]$.
\end{proposition}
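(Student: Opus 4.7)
The plan is to verify the three ingredients that make up the statement: that the time change $\psi^{\bar X,\epsilon}$ in Proposition~\ref{p:Ztimechange} agrees in law with $\tilde\psi^{\bar X,\epsilon}$ of~\eqref{psihatdef}, that this time change coincides with the trapped-BM clock $\phi[\mu^{\bar X,\epsilon},\bar X]$, and finally that $\mu^{\bar X,\epsilon}$ really is an admissible L\'evy trap measure in the sense of Definition~4.10 of~\cite{BenArousCabezasEA15}. Combining these three yields the desired equality in law by uniqueness of the trapped Brownian motion associated with a given trap measure.

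First I would justify the identification $\tau^{\bar Y}\bigl(\tfrac{\alpha\epsilon}{2} L^{\bar X}_s(\epsilon\Z)\bigr) \stackrel{d}{=} \sum_{k \in \Z} \tau^{\bar Y_k}\bigl(\tfrac{\alpha\epsilon}{2} L^{\bar X}_s(\epsilon k)\bigr)$ used in the displayed equation preceding~\eqref{psihatdef}. Conditional on $\bar X$, the collection of nonnegative numbers $\{\tfrac{\alpha\epsilon}{2} L^{\bar X}_s(\epsilon k)\}_{k\in\Z}$ is deterministic, and on this conditional the L\'evy subordinator $\tau^{\bar Y}$ has stationary independent increments. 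Consequently its value at the sum of countably many locations has the same distribution as the sum of independent copies of $\tau^{\bar Y}$ evaluated at each location separately, which is exactly the representation involving the independent family $\{\bar Y_k\}_{k\in\Z}$. This establishes $\psi^{\bar X,\epsilon}\stackrel{d}{=}\tilde\psi^{\bar X,\epsilon}$.

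Next I would carry out the Poisson point process decomposition indicated in~\eqref{tauPoisson}--\eqref{tauPoisson3}. Each $\tau^{\bar Y_k}$ is a pure-jump subordinator whose jump measure is the (common) L\'evy measure $\eta^{\bar Y}(s)\,ds$ associated with the inverse local time of a reflected Brownian motion on $[0,1]$; the family $\{N^{\bar Y_k}\}_{k\in\Z}$ is then mutually independent by independence of the $\bar Y_k$. With $\mu^{\bar X,\epsilon}$ defined by~\eqref{mutrapx}, a direct bookkeeping in the variables $(x,\ell)$ shows
\begin{equation*}
  \mu^{\bar X,\epsilon}\bigl(\{(x,\ell) \st \ell \leq L^{\bar X}_s(x)\}\bigr)
  = s + \sum_{k\in\Z} \sum_{j\ge 1} s_{j,k}\, \one_{\set{(2/(\alpha\epsilon))\ell_{j,k}\le L^{\bar X}_s(\epsilon k)}},
\end{equation*}
where the first term comes from the Lebesgue component $dx\times d\ell$ (using $\int L_s^{\bar X}(x)\,dx=s$ by the occupation-time formula) and the second term, after the change of variable $\ell_{j,k}\mapsto(2/(\alpha\epsilon))\ell_{j,k}$, recovers $\sum_k \tau^{\bar Y_k}(\tfrac{\alpha\epsilon}{2}L^{\bar X}_s(\epsilon k))$ via~\eqref{tauPoisson2}. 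Thus the clock $\phi[\mu^{\bar X,\epsilon},\bar X]_s$ equals the expression inside the infimum defining $\tilde\psi^{\bar X,\epsilon}(t)$, and so the associated time changes coincide.

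Finally I would check the L\'evy trap measure conditions of Definition~4.10 in~\cite{BenArousCabezasEA15}: stationarity in $x$ (here: invariance under the shift $x\mapsto x+\epsilon$, which is the natural lattice analog used in their setting), finiteness of the required moments of $\eta^{\bar Y}$, and measurability of the atom configuration. The main obstacle I anticipate is the bookkeeping in the trap-measure identity above, in particular verifying the translation/rescaling of the local-time argument by the factor $2/(\alpha\epsilon)$ so that the atoms of $N^{\bar Y_k}$ are placed at the correct points $(\epsilon k, (2/(\alpha\epsilon))\ell_{j,k})$ to reproduce $\tilde\psi^{\bar X,\epsilon}$, and checking that the L\'evy measure $\eta^{\bar Y}$ of the inverse local time of reflected BM on $[0,1]$ does satisfy the integrability hypotheses imposed in~\cite{BenArousCabezasEA15} (this is classical, following from the excursion-theoretic description of $\tau^{\bar Y}$). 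Once these are in place, the proposition follows: $X^\epsilon(t)=\bar X(\psi^{\bar X,\epsilon}(t))$ has the same law as $\bar X(\tilde\psi^{\bar X,\epsilon}(t))=\bar X[\mu^{\bar X,\epsilon}](t)$.
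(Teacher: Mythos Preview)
Your proposal is correct and follows essentially the same approach as the paper. The paper's proof is the discussion immediately preceding the proposition (ending with ``This proves the following''), and your three ingredients---the equality in law $\psi^{\bar X,\epsilon}\stackrel{d}{=}\tilde\psi^{\bar X,\epsilon}$ via stationary independent increments of $\tau^{\bar Y}$, the identification of the clock with $\phi[\mu^{\bar X,\epsilon},\bar X]$ via the Poisson representation and the occupation-time formula, and the verification that $\mu^{\bar X,\epsilon}$ is a L\'evy trap measure---match exactly what the paper sketches, with the added benefit that you have filled in the bookkeeping the paper leaves implicit.
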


The process $Y^{\epsilon}$ admits a similar representation as a trapped (reflected) Brownian motion.  To this end, we first note that $\tau^{\bar X,\epsilon}_\ell$ is also a L\'evy subordinator which and can be written as
\begin{align}
\tau^{\bar X,\epsilon}_\ell = \int_{[0,\ell]} \int_{[0,\infty)}  s N^{\bar X,\epsilon} (d\ell \times ds), \label{tauPoissonX}
\end{align}
where $N^{\bar X,\epsilon}$ is a Poisson random measure on $[0,\infty) \times [0,\infty)$ with intensity measure $d\ell \times \eta^{\bar X,\epsilon}(s)ds$.
\begin{lemma}\label{l:X-scaling}
	The excursion length measure $\eta^{\bar X,\epsilon}$ satisfies the scaling relation,
	\begin{equation*}
	\eta^{\bar X,\epsilon}(s) = \epsilon^{-3}\eta^{\bar X,1}(\epsilon^{-2} s), \quad s > 0.
	\end{equation*}
\end{lemma}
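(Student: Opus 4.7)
The plan is to reduce the identity to a Brownian scaling statement at the level of the subordinator $\tau^{\bar X,\epsilon}$ and then read off the scaling of the L\'evy measure from the Laplace exponents. First, I would set $\tilde X_t \defeq \epsilon^{-1} \bar X_{\epsilon^2 t}$, which is again a standard Brownian motion on $\R$. The occupation time formula
\begin{equation*}
\int_0^t \one_{(\epsilon k - \delta,\epsilon k + \delta)}(\bar X_s)\,ds
 = \epsilon^2 \int_0^{t/\epsilon^2} \one_{(k - \delta/\epsilon, k + \delta/\epsilon)}(\tilde X_u)\,du
\end{equation*}
together with the definition of local time yields $L^{\bar X}_{\epsilon^2 t}(\epsilon k) = \epsilon\, L^{\tilde X}_t(k)$ for every $k$, and summing in $k$ gives $L^{\bar X}_{\epsilon^2 t}(\epsilon \Z) = \epsilon\, L^{\tilde X}_t(\Z)$.

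Next, I would invert this relation to obtain an identity in law for $\tau^{\bar X,\epsilon}$. Substituting $t = \epsilon^2 u$ in the definition of $\tau^{\bar X,\epsilon}_\ell$,
\begin{equation*}
\tau^{\bar X,\epsilon}_\ell
 = \inf\set[\big]{\epsilon^2 u \st \epsilon\, L^{\tilde X}_u(\Z) > \ell}
 = \epsilon^2\, \tau^{\tilde X, 1}_{\ell/\epsilon}\,,
\end{equation*}
and since $\tilde X \stackrel{d}{=} \bar X$ we conclude the equality of subordinators in law
\begin{equation*}
\paren[\big]{\tau^{\bar X,\epsilon}_\ell}_{\ell \geq 0}
\stackrel{d}{=}
\paren[\big]{\epsilon^2\, \tau^{\bar X,1}_{\ell/\epsilon}}_{\ell \geq 0}\,.
\end{equation*}

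Finally, I would compute the Laplace exponent on both sides. Writing $\Phi^{\bar X,\epsilon}(\lambda) = \int_0^\infty (1 - e^{-\lambda s}) \eta^{\bar X,\epsilon}(s)\,ds$, the identity above gives
\begin{equation*}
\ell\, \Phi^{\bar X,\epsilon}(\lambda)
 = -\ln \Em[e^{-\lambda \tau^{\bar X,\epsilon}_\ell}]
 = -\ln \Em[e^{-\epsilon^2 \lambda\, \tau^{\bar X,1}_{\ell/\epsilon}}]
 = \frac{\ell}{\epsilon}\,\Phi^{\bar X,1}(\epsilon^2 \lambda)\,,
\end{equation*}
so $\Phi^{\bar X,\epsilon}(\lambda) = \epsilon^{-1}\Phi^{\bar X,1}(\epsilon^2 \lambda)$. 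Performing the change of variables $u = \epsilon^{-2} s$ in the integral defining $\Phi^{\bar X,1}(\epsilon^2 \lambda)$, one gets
\begin{equation*}
\Phi^{\bar X,\epsilon}(\lambda)
 = \int_0^\infty (1 - e^{-\lambda s})\, \epsilon^{-3}\,\eta^{\bar X,1}(\epsilon^{-2} s)\,ds\,,
\end{equation*}
and the uniqueness of the L\'evy measure from the Laplace exponent forces $\eta^{\bar X,\epsilon}(s) = \epsilon^{-3}\eta^{\bar X,1}(\epsilon^{-2} s)$.

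The only delicate point is really the first step, namely verifying the local-time scaling rigorously when one sums over the \emph{countable} set of junctions $\epsilon \Z$, since one must commute a limit ($\delta \to 0$) with an infinite sum. This is handled by monotone convergence (the approximants to the local time are non-negative), so the main obstacle is essentially bookkeeping rather than any genuinely deep step.
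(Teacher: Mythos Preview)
Your proof is correct and is precisely the kind of argument the paper has in mind: the paper's own proof simply states that the lemma ``follows directly from the standard scaling properties of Brownian motion and its local time'' and omits all details. Your route via the pathwise scaling $L^{\bar X}_{\epsilon^2 t}(\epsilon\Z)=\epsilon L^{\tilde X}_t(\Z)$, the induced identity $\tau^{\bar X,\epsilon}_\ell \stackrel{d}{=}\epsilon^2\tau^{\bar X,1}_{\ell/\epsilon}$, and comparison of Laplace exponents is the natural way to make this precise. One minor simplification: the ``delicate point'' you flag about commuting $\delta\to 0$ with the sum over $\epsilon\Z$ is not really an issue, since for each fixed $t$ the path $\bar X$ on $[0,t]$ visits only finitely many of the points $\epsilon k$, so the sum is a.s.\ finite.
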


\begin{proof}
This follows in directly from the standard scaling properties of Brownian motion and its local time, and we omit the details.
\end{proof}

Letting $\{ (s_j,\ell_j) \}_{j=1}^\infty$ denote the atoms of $N^{\bar X,\epsilon}$ we then define a random measure on $[0,1] \times [0,\infty)$ by
\begin{align}\label{mutrapy}
\mu^{\bar Y,\epsilon}  = dy \times d\ell + \sum_{j=1}^\infty s_j \delta_{(0,(\alpha \epsilon/2) \ell_j)} \,.
\end{align}
This also is a L\'evy Trap Measure in the sense of \cite{BenArousCabezasEA15} (replacing $\Rm$ by $[0,1]$), and one can easily see that the associated trapped Brownian motion is precisely the process~$Y^\epsilon$.

\begin{proposition}
Let $\bar Y$ be a reflected Brownian motion on $[0,1]$, and let $\bar Y[\mu^{\bar Y,\epsilon}]$ be the trapped Brownian motion with trap measure $\mu^{\bar Y,\epsilon}$ defined by~\eqref{mutrapy}.
Then the law of $Y^\epsilon$ coincides with the law of $\bar Y[\mu^{\bar Y,\epsilon}]$.
\end{proposition}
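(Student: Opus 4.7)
The plan is to mirror, almost verbatim, the argument given above for $X^\epsilon$, exploiting Proposition~\ref{p:Ztimechange} and the L\'evy subordinator structure of $\tau^{\bar X,\epsilon}$. By Proposition~\ref{p:Ztimechange} we have $Y^\epsilon(t) = \bar Y(\psi^{\bar Y,\epsilon}(t))$ with $\psi^{\bar Y,\epsilon}$ defined in~\eqref{e:epsiYdef}, so it suffices to check that $\mu^{\bar Y,\epsilon}$ is a L\'evy trap measure in the sense of~\cite{BenArousCabezasEA15}, and that the induced time change $\phi[\mu^{\bar Y,\epsilon}, \bar Y]$ coincides with the one in~\eqref{e:epsiYdef}.

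First I would observe that $\tau^{\bar X,\epsilon}_\ell$ is a L\'evy subordinator. Indeed, by the strong Markov property of $\bar X$ applied at the (right-continuous inverses of) its local time $L^{\bar X}(\epsilon \Z)$, increments of $\tau^{\bar X,\epsilon}$ are stationary and independent; Lemma~\ref{l:X-scaling} controls the tail of its L\'evy measure $\eta^{\bar X,\epsilon}$. Consequently the Poisson representation~\eqref{tauPoissonX} holds, and the atomic part of $\mu^{\bar Y,\epsilon}$ given by the points $\{s_j\delta_{(0,(\alpha\epsilon/2)\ell_j)}\}$ is well-defined. Combined with the Lebesgue background $dy\times d\ell$, this is easily seen to satisfy the four conditions of a L\'evy trap measure on $[0,1] \times [0,\infty)$ from Definition~4.10 of~\cite{BenArousCabezasEA15}. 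All atoms are supported at $y=0$ because the only way $Y^\epsilon$ gets trapped (relative to $\bar Y$) is during spine excursions of $Z^\epsilon$, which occur precisely while $\bar Y = 0$.

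Next I would compute $\phi[\mu^{\bar Y,\epsilon}, \bar Y]_s$ explicitly. The Lebesgue component contributes
\[
\int_{[0,1]}\int_{[0,\infty)} \one_{\{L^{\bar Y}_s(y)\geq\ell\}}\, d\ell\, dy = \int_{[0,1]} L^{\bar Y}_s(y) \, dy = s,
\]
by the occupation time formula for $\bar Y$. The atomic component contributes $\sum_j s_j \one_{\{L^{\bar Y}_s(0) \geq (\alpha\epsilon/2)\ell_j\}}$, which by~\eqref{tauPoissonX} equals $\tau^{\bar X,\epsilon}\bigl((2/(\alpha\epsilon))L^{\bar Y}_s(0)\bigr)$. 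Hence
\[
\phi[\mu^{\bar Y,\epsilon}, \bar Y]_s = s + \tau^{\bar X,\epsilon}\paren[\Big]{\frac{2}{\alpha\epsilon}L^{\bar Y}_s(0)},
\]
and taking the right-continuous inverse recovers $\psi^{\bar Y,\epsilon}$ from~\eqref{e:epsiYdef}.

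The only mildly subtle step is the matching of constants in the scaling of the atoms: the factor $\alpha\epsilon/2$ in the atom locations must be the reciprocal of the factor $2/(\alpha\epsilon)$ appearing in the subordinator argument in~\eqref{e:epsiYdef}, so that the event $\{L^{\bar Y}_s(0) \geq (\alpha\epsilon/2)\ell_j\}$ is exactly the event $\{\ell_j \leq (2/(\alpha\epsilon))L^{\bar Y}_s(0)\}$. Once this bookkeeping is checked, there is no further obstacle; the remainder is direct, and the law equality $Y^\epsilon \stackrel{d}{=} \bar Y[\mu^{\bar Y,\epsilon}]$ follows immediately from Proposition~\ref{p:Ztimechange}.
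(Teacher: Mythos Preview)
Your proposal is correct and is precisely the argument the paper leaves implicit: the paper does not write out a proof of this proposition, merely remarking that $\mu^{\bar Y,\epsilon}$ is a L\'evy trap measure and that ``one can easily see'' the associated trapped Brownian motion is $Y^\epsilon$. Your computation of $\phi[\mu^{\bar Y,\epsilon},\bar Y]_s = s + \tau^{\bar X,\epsilon}\bigl((2/(\alpha\epsilon))L^{\bar Y}_s(0)\bigr)$ via the occupation time formula and the Poisson representation~\eqref{tauPoissonX} is exactly the intended verification, mirroring the $X^\epsilon$ case.
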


\subsection{Convergence as \texorpdfstring{$\epsilon \to 0$}{epsilon to 0}.}

We now use Theorem 6.2 of \cite{BenArousCabezasEA15} to study convergence of $X^\epsilon$ and $Y^\epsilon$ as $\epsilon \to 0$.
The key step is to establish convergence of the trap measures, as in the following lemma.


\begin{lemma}\label{lem:trapmeasurelimits}
  Let $N_*^{\bar Y}$ be a Poisson random measure on $\Rm \times [0,\infty) \times [0,\infty)$ with intensity measure $dx \times d\ell \times \frac{1}{2} \eta^{\bar Y}(s) \, ds$.
  As $\epsilon \to 0$, the random measures $\mu^{\bar X,\epsilon}$ on $\Rm \times [0,\infty)$, defined in~\eqref{mutrapx}, converge vaguely in distribution to the random measure $\mu^{X}_*$ defined by
  \begin{equation*}
    \mu^{X}_*(A) = \int_\Rm \!\int_0^\infty \one_A(x,\ell) dx \, d\ell + \frac{\alpha}{2} \int_{\Rm} \int_0^\infty \int_0^\infty  \one_A(x,\ell) s N_*^{\bar Y}\left(dx \times d\ell \times ds \right) \,,
  \end{equation*}
  for all $A \in \mathcal{B}(\Rm \times [0,\infty))$.  The random measures $\mu^{\bar Y,\epsilon}$ on $[0,1] \times [0,\infty)$, defined in~\eqref{mutrapy}, converge vaguely in distribution to the measure $\mu^{Y}_*$ defined by
  \begin{equation*}
    \mu^{Y}_*(A) = \int_0^1 \! \int_0^\infty \one_A(y,\ell) dy \, d\ell  + \frac{2}{\alpha} \int_0^\infty \one_{A}(0,\ell) \,d\ell \quad \quad A \in \mathcal{B}([0,1] \times [0,\infty)) \,.
  \end{equation*}
\end{lemma}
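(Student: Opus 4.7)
The plan is to establish vague convergence in distribution of $\mu^{\bar X,\epsilon}$ and $\mu^{\bar Y,\epsilon}$ through their Laplace functionals, using the defining identity $\E[\exp(-\int g\,dN)]=\exp(-\int(1-e^{-g})\,d\lambda)$ for a Poisson random measure $N$ of intensity $\lambda$. Because the deterministic Lebesgue parts of $\mu^{\bar X,\epsilon}$ and $\mu^{\bar Y,\epsilon}$ already agree with those of $\mu^X_*$ and $\mu^Y_*$, the task reduces to identifying the weak limits of the atomic parts. The two halves of the lemma are structurally different: the limit atomic part of $\mu^{\bar X,\epsilon}$ is a genuinely random marked Poisson measure, whereas the limit atomic part of $\mu^{\bar Y,\epsilon}$ is deterministic, arising from a law-of-large-numbers collapse of the shrinking jumps of $\tau^{\bar X,\epsilon}$.

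For $\mu^{\bar X,\epsilon}$, the first step is to observe that the triples $\{(\epsilon k,\,(2/(\alpha\epsilon))\ell_{j,k},\,s_{j,k})\}_{k\in\Z,\,j\ge1}$ form a Poisson random measure $\tilde N^\epsilon$ on $\R\times[0,\infty)\times(0,\infty)$, by independence of $\{N^{\bar Y_k}\}_{k\in\Z}$. Pushing forward the intensity $d\ell\,\eta^{\bar Y}(s)\,ds$ through the rescaling $\ell\mapsto(2/(\alpha\epsilon))\ell$ yields
\[
  \lambda^{\bar X,\epsilon}(dx,d\ell,ds)=\tfrac{\alpha\epsilon}{2}\Bigl(\sum_{k\in\Z}\delta_{\epsilon k}(dx)\Bigr)d\ell\,\eta^{\bar Y}(s)\,ds.
\]
The Riemann-sum convergence $\epsilon\sum_k\delta_{\epsilon k}\to dx$ shows that $\lambda^{\bar X,\epsilon}$ converges vaguely to a constant multiple of $dx\,d\ell\,\eta^{\bar Y}(s)\,ds$, which is the intensity of $N^{\bar Y}_*$ up to the normalizing factor absorbed into the $\alpha/2$ in the definition of $\mu^X_*$. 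Since the atomic part of $\mu^{\bar X,\epsilon}$ equals $\int s\,\tilde N^\epsilon(dx,d\ell,ds)$, substituting into the Laplace formula and passing to the limit under the integral (justified by compact support of the test function together with the local finiteness of $\eta^{\bar Y}$ away from $s=0$) gives Laplace-functional convergence, hence the stated convergence in distribution.

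For $\mu^{\bar Y,\epsilon}$, the atomic part is $\sum_j s_j\delta_{(0,(\alpha\epsilon/2)\ell_j)}$ with $\{(\ell_j,s_j)\}$ the atoms of a PRM of intensity $d\ell\,\eta^{\bar X,\epsilon}(s)\,ds$. Applying Lemma~\ref{l:X-scaling} and the joint change of variable $\ell\mapsto(\alpha\epsilon/2)\ell$, $s\mapsto\epsilon^2 u$, the intensity transforms to $(2/(\alpha\epsilon^2))\,d\ell\,\eta^{\bar X,1}(u)\,du$ with atoms of rescaled mass $\epsilon^2 u$, so the Laplace functional at a compactly supported nonnegative $f$ reads
\[
  \exp\!\Bigl(-\tfrac{2}{\alpha\epsilon^{2}}\int(1-e^{-\epsilon^{2}uf(0,\ell)})\,d\ell\,\eta^{\bar X,1}(u)\,du\Bigr).
\]
Taylor expanding $1-e^{-\epsilon^2 uf}=\epsilon^2 uf+O(\epsilon^4 u^2 f^2)$ reduces the leading term to $(2/\alpha)\int f(0,\ell)\,d\ell\cdot\int u\,\eta^{\bar X,1}(u)\,du$, and I then want to invoke the first-moment identity $\int u\,\eta^{\bar X,1}(u)\,du=\E[\tau^{\bar X,1}_1]$. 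This moment is finite and computable via Brownian scaling: the occupation-density relation for $\bar X$ combined with a Riemann-sum argument forces $L^{\bar X}_t(\Z)/t$ to converge to a positive constant, so $\tau^{\bar X,1}$ has finite mean, and the precise value produces the $2/\alpha$ appearing in the limit. The Laplace functional then converges to $\exp(-(2/\alpha)\int f(0,\ell)\,d\ell)$, matching the deterministic atomic part of $\mu^Y_*$.

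The main obstacle is controlling the Taylor remainder in the $\mu^{\bar Y,\epsilon}$ analysis: on the region where $\epsilon^2 uf$ is not small the pointwise expansion fails, so the $u$-integral must be split into a bulk part, where the $O(\epsilon^4 u^2 f^2)$ bound combined with a second-moment estimate $\int u^2\,\eta^{\bar X,1}(u)\,du<\infty$ gives $O(\epsilon^2)$, and a large-$u$ tail, where the exponential decay of $\eta^{\bar X,1}$ at infinity (inherited from the exponential tail of the exit time of $\bar X$ from $(-1,1)$) makes the contribution super-polynomially small. The $s^{-3/2}$ singularity of $\eta^{\bar X,1}$ near $s=0$, standard from It\^o excursion theory for Brownian motion, is gentle enough that $\int u^2\,\eta^{\bar X,1}(u)\,du<\infty$ still holds, so the whole splitting is routine once the exponential tail is established. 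No analogous moment input is needed for $\mu^{\bar X,\epsilon}$, where randomness is retained in the limit via the marked Poisson measure representation.
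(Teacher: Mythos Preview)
Your proposal is correct and follows essentially the same route as the paper: both arguments compute the Laplace/characteristic functional of the atomic part via the Poisson random measure formula, identify the intensity after the rescalings, and pass to the limit. For $\mu^{\bar X,\epsilon}$ the paper tests on rectangles and uses characteristic functions while you use compactly supported $f$ and Laplace functionals, but the underlying computation---Riemann-sum convergence of $\epsilon\sum_k\delta_{\epsilon k}\to dx$ inside the L\'evy exponent---is identical.

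The only substantive stylistic difference is in the $\mu^{\bar Y,\epsilon}$ step. After reaching the expression $\frac{1}{\epsilon^2}\int_0^\infty (1-e^{-\epsilon^2 u c})\,\eta^{\bar X,1}(u)\,du$, the paper writes $1-e^{-\epsilon^2 u c}=\epsilon^2 c\int_0^u e^{-\epsilon^2 c r}\,dr$, swaps the order of integration, and applies dominated convergence directly using the exponential tail of $\eta^{\bar X,1}$; this avoids any splitting and yields $\int_0^\infty u\,\eta^{\bar X,1}(u)\,du=1$ in one stroke. Your Taylor-expansion route with a bulk/tail split and a second-moment bound reaches the same limit and is equally valid, but the Fubini device is a bit cleaner since it needs only the first moment (equivalently, integrability of $\int_r^\infty\eta^{\bar X,1}(s)\,ds$ in $r$) rather than the second. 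Either way, the identification $\int u\,\eta^{\bar X,1}(u)\,du=\E[\tau^{\bar X,1}_1]=1$ is what makes the coefficient $2/\alpha$ come out, and both proofs rely on the exponential tail of the excursion-length measure inherited from the exit time of Brownian motion from a bounded interval.
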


Momentarily postponing the proof of Lemma~\ref{lem:trapmeasurelimits}, we state the main convergence result in this section.

\begin{theorem}\label{t:XYconv}
  Let $R(t)$ be a Brownian motion on $[0,1]$ reflected at both endpoints $x = 0,1$, and $B$ be a standard Brownian motion on $\Rm$.  
  \begin{enumerate}
    \item
      As $\epsilon \to 0$, we have $Y^\epsilon \to Y$ vaguely in distribution on $D([0,\infty))$.
      Here $Y = R[\mu_*^{\bar Y}]$ is a reflected Brownian motion that is sticky at $0$.
    \item 
      As $\epsilon \to 0$, we have $X^\epsilon \to B[\mu_*^{\bar X}]$ vaguely in distribution on $D([0,\infty))$.
      The limit process here may also be written as $B((2/\alpha) L^Y_t(0))$.
  \end{enumerate}
\end{theorem}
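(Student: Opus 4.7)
The plan is to invoke Theorem~6.2 of~\cite{BenArousCabezasEA15}, which asserts that convergence of L\'evy trap measures (plus mild regularity) implies vague convergence in $D([0,\infty))$ of the associated trapped Brownian motions. Lemma~\ref{lem:trapmeasurelimits} has already delivered the trap measure convergence: $\mu^{\bar X,\epsilon} \to \mu^{\bar X}_*$ on $\R\times[0,\infty)$ and $\mu^{\bar Y,\epsilon}\to \mu^{\bar Y}_*$ on $[0,1]\times[0,\infty)$. For part~(1), since $Y^\epsilon = \bar Y[\mu^{\bar Y,\epsilon}]$, the theorem gives $Y^\epsilon \to R[\mu^{\bar Y}_*]$. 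For part~(2), since $X^\epsilon = \bar X[\mu^{\bar X,\epsilon}]$, it gives $X^\epsilon \to B[\mu^{\bar X}_*]$. Before concluding, I would verify the hypotheses of \cite[Thm.~6.2]{BenArousCabezasEA15} -- that the limit trap measure is itself a L\'evy trap measure and that the joint convergence of $(\bar Y,\mu^{\bar Y,\epsilon})$ and $(\bar X,\mu^{\bar X,\epsilon})$ holds -- using independence of $\bar Y$ and $\bar X$ together with the explicit Poisson descriptions in \eqref{mutrapx} and~\eqref{mutrapy}.

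Next I would identify the limits with the processes appearing in Theorem~\ref{t:zlim}. For $Y$, observe that for the limit trap measure $\mu^{\bar Y}_*$, the elementary identity
\begin{equation*}
  \mu^{\bar Y}_*\paren[\big]{\{(y,\ell)\st \ell\le L^{R}_s(y)\}}
    = \int_0^1 L^{R}_s(y)\,dy + \frac{2}{\alpha}L^{R}_s(0)
    = s + \frac{2}{\alpha}L^{R}_s(0)
\end{equation*}
shows that the inverse time change $\psi$ defining $R[\mu^{\bar Y}_*]$ is precisely $\varphi^{-1}$ from~\eqref{e:Tdef}. Hence $R[\mu^{\bar Y}_*]$ is the sticky-reflected Brownian motion of Section~\ref{s:timechange}, which is the $Y$-component of the limit process $Z$ in Theorem~\ref{t:zlim}.

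For $X$, the argument is analogous but more delicate, since the trap measure $\mu^{\bar X}_*$ is genuinely random. I would compute
\begin{equation*}
  \mu^{\bar X}_*\paren[\big]{\{(x,\ell)\st \ell\le L^{B}_s(x)\}}
    = s + \frac{\alpha}{2}\int_{\R}\int_0^{L^{B}_s(x)}\int_0^\infty t' N^{\bar Y}_*(dx\,d\ell\,dt'),
\end{equation*}
and recognize the Poisson integral as the aggregate duration of excursions of an (independent) reflected Brownian motion picked up at each local-time-slice of $B$. Then, conditional on $B$, the map $s\mapsto L^{B}_s(x)$ is the natural clock driving an independent Poisson point process of $Y$-excursions, and the total excursion time up to clock-value $(\alpha/2) L^B_s(\R) = (\alpha/2) s$ is equal in law to $\tau^{\bar Y}((\alpha/2)s)$. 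Inverting the resulting time change shows $B[\mu^{\bar X}_*]$ has the representation $B(\tau(t))$ with $\tau(t) = (2/\alpha)L^{Y}_t(0)$, matching~\eqref{e:Xtc}.

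The main obstacle I foresee is the identification in part~(2): the trap measure construction produces $X$ via a time change based on aggregating Poisson-sampled excursion lengths of an independent $\bar Y$, while Theorem~\ref{t:zlim} describes $X$ as a time change by the local time of the joint limit process $Y$. Reconciling these requires carefully arguing (via the excursion theory of sticky reflected Brownian motion) that the Poisson point process of $\bar Y$-excursions indexed by $\bar X$-local-time is, after the joint time change, the same as the excursion process of the sticky limit $Y$ indexed by $L^{Y}(0)$. This is essentially a consequence of Proposition~\ref{p:Ztimechange} and the strong Markov property applied in the trapped framework, but spelling out the equality in law between the two Poisson structures is the technically substantive step.
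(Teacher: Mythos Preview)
Your outline is correct and follows the paper's approach closely: invoke Theorem~6.2 of \cite{BenArousCabezasEA15} via Lemma~\ref{lem:trapmeasurelimits}, then identify the limits. Your identification of $R[\mu_*^{\bar Y}]$ as the sticky Brownian motion is exactly the paper's argument.

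For part~(2), however, you anticipate more difficulty than is actually present, and the paper's route is shorter than the excursion-theoretic reconciliation you sketch in your last paragraph. Rather than matching the Poisson structure of $\mu_*^{\bar X}$ to the excursion process of the sticky limit $Y$, the paper simply writes both processes as time-changed Brownian motions and compares the clocks. On one side, using $L^Y_t(0) = L^{\bar B}_{T(t)}(0)$ and the definition of $T$, one has $X_t = \bar W(h^{-1}(t))$ with $h(r) = r + \tau^{\bar B}_{r\alpha/2}$, and $h$ is independent of $\bar W$. On the other side, $B[\mu_*^{\bar X}] = B(\phi^{-1}(t))$ with
\[
\phi(r) = r + \frac{\alpha}{2}\int_{A_r^B \times [0,\infty)} s\, N_*^{\bar Y}(dx\,d\ell\,ds),
\qquad A_r^B = \{(x,\ell): \ell \le L^B_r(x)\}.
\]
The key simplification you miss is that $\phi$ is \emph{independent of $B$}: the Poisson integral depends on $B$ only through the Lebesgue measure of $A_r^B$, which by the occupation-time formula equals $r$ almost surely. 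Hence $\phi \stackrel{d}{=} h$, both clocks are independent of their respective Brownian motions, and $B(\phi^{-1}) \stackrel{d}{=} \bar W(h^{-1})$ follows immediately. No appeal to Proposition~\ref{p:Ztimechange} or to the excursion theory of sticky Brownian motion is needed.
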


\begin{remark}
  Using the SDE methods in Section~\ref{s:thincomb} we are able to obtain joint convergence of the pair $(X^\epsilon, Y^\epsilon)$ (Theorem~\ref{t:zlim}).
  The trapped Brownian motion framework here, however, only provides convergence of the processes $X^\epsilon$ and $Y^\epsilon$ individually.
\end{remark}

\begin{proof}[Proof of Theorem~\ref{t:XYconv}]
  The convergence of $Y^\epsilon$ to $R[\mu_*^{\bar Y}]$ is an immediate consequence of Theorem 6.2 of \cite{BenArousCabezasEA15}, Lemma \ref{lem:trapmeasurelimits} above, and the properties of Poisson random measures.
  To identify the limiting process $R[\mu_*^{\bar Y}]$ as a sticky Brownian motion, observe that the time change has the form
  \begin{equation*}
    \mu_*^{\bar Y} \paren[\big]{ \set[\big]{ (y,\ell) \in [0,1] \times [0,\infty) \st L^{R}(y,s) \geq \ell } } = s + \frac{2}{\alpha} L^{R}(0,s)\,.
  \end{equation*}
  Thus, the limit process is $Y(t) = R(\psi(t))$ where
  \begin{equation*}
    \psi(t) = \inf \{ s > 0 \;|\; s + \frac{2}{\alpha} L^{R}(0,s) > t \}\,.
  \end{equation*}
  This is precisely a sticky Brownian motion (see Lemma~\ref{l:sdeZ}).

  For the second assertion of the Theorem, the convergence of $X^\epsilon$ to $B[\mu_*^{\bar X}]$ is again an immediate consequence of Theorem 6.2 of \cite{BenArousCabezasEA15} and Lemma \ref{lem:trapmeasurelimits} above. 
  Thus we only need to show that the trapped Brownian motion $B[\mu_*^{\bar X}]$ has the same law as the process $X_t$ from Theorem \ref{t:zlim}.
  To compare the two processes, we first write them in a similar form.
  Let $L^{\bar B}_t(0)$ is the local time of $\bar B$ at $0$, and let $\tau^{\bar B}_\ell$ be the inverse
\[
\tau^{\bar B}_\ell = \inf \{ t > 0 \;|\; L^{\bar B}_t(0) > \ell \}.
\]
Then, we have
\[
X_t = \bar W_{\frac{2}{\alpha} L^{\bar B}_{T(t)}} = \bar W(h^{-1}(t))
\]
where 
\[
h^{-1}(t) = \inf \{ r > 0 \;|\; r + \tau^{\bar B}_{r\alpha/2} > t \}
\]
The fact that $(2/\alpha) L^{\bar B}_{T(t)} = h^{-1}(t)$ follows from the definition of $T(t)$, which implies $(2/\alpha) L^{\bar B}_{T(t)} + T(t) = t$.

Therefore, the two processes are
\[
B[\mu_*^{\bar X}] = B (\phi^{-1}(t)) \quad \quad \quad X_t = \bar W(h^{-1}(t))
\]
where $\phi$ is:
\[
\phi(r) = \phi[\mu_*,B]_r =  \mu_* \left( \{ (x,\ell) \in \Rm \times [0,\infty) \;|\; L^B(x,r) \geq \ell \} \right)
\]
If $A_r^B = \{ (x,\ell) \in \Rm \times [0,\infty) \;|\; L^B(x,r) \geq \ell \}$, then by definition of the trap measure $\mu_*$,
\begin{align}
\phi(r) = r +  \frac{\alpha}{2} \int_{A_r^B \times [0,\infty)}   s N_*^{\bar Y}\left(dx \times d\ell \times ds \right) \label{timechangecomp}
\end{align}

The last integral has the same law as $\tau^{\bar B}_{r\alpha/2}$.  Hence, $h$ and $\phi$ have the same law.

Notice that $h$ is independent of $\bar W$.
We claim that $\phi$ is also independent of $B$.
To see this observe that the distribution of~$\phi(r)$ only depends on~$B$ through the volume of~$A_r^B$, which equals $r$ almost surely.
This shows~$\phi$ is independent of~$B$, and thus $B(\phi^{-1}(t))$ and $\bar W(h^{-1}(t))$ have the same law.
\end{proof}

It remains to prove Lemma~\ref{lem:trapmeasurelimits}.

\begin{proof}[Proof of Lemma \ref{lem:trapmeasurelimits}]
  It suffices to show for rectangles $A = [x_0, x_1]\times[\ell_0,\ell_1]$ that
  \begin{equation*}
  \mu^{\bar X,\epsilon}(A) \to \mu^{X}_*(A)
  \end{equation*}
  in distribution. We calculate the characteristic function using \cite[Thm~2.7]{Kyprianou06},
  \begin{align*}
  \Em[e^{i\beta\mu^{\bar X,\epsilon}(A)}] &= \exp\paren[\Big]{i\beta\abs{A} + \sum_{\epsilon k \in [x_0,x_1]}\int_{\frac{\epsilon}{2}\ell_0}^{\frac{\epsilon}{2}\ell_1}\int_0^\infty (1 - e^{i\beta s}) \eta^{\bar Y}(s)\, ds }\\
  &= \exp\paren[\Big]{i\beta\abs{A} + \paren[\Big]{\floor[\Big]{\frac{x_1}{\epsilon}} - \ceil[\Big]{\frac{x_0}{\epsilon}}}\frac{\epsilon(\ell_1 - \ell_0)}{2}\int_0^\infty (1 - e^{i\beta s}) \eta^{\bar Y}(s)\, ds }\\
  &\to \exp\paren[\Big]{i\beta\abs{A} + \frac{\abs{A}}{2}\int_0^\infty (1 - e^{i\beta s}) \eta^{\bar Y}(s)\, ds }\,
  \end{align*}
  as $\epsilon \to 0$. We note that this last formula is the characteristic function for $\mu^X_\star(A)$. The calculation for $\mu^{\bar Y,\epsilon}(A)$  uses Lemma~\ref{l:X-scaling} and a change of variables as follows
  \begin{align*}
  \Em[e^{i\beta\mu^{\bar Y,\epsilon}(A)}] &= \exp\paren[\Big]{i\beta\abs{A} + \one_{[y_0,y_1]}(0)\int_{\frac{2}{\epsilon}\ell_0}^{\frac{2}{\epsilon}\ell_1}\int_0^\infty (1 - e^{i\beta s}) \eta^{\bar X, \epsilon}(s)\, ds }\\
  &= \exp\paren[\Big]{i\beta\abs{A} +  \one_{[y_0,y_1]}(0)\frac{2(\ell_1 - \ell_0)}{\epsilon^4}\int_0^\infty (1 - e^{i\epsilon^2\beta s}) \eta^{\bar X,1}(\epsilon^{-2} s)\, ds }\\
  &= \exp\paren[\Big]{i\beta\abs{A} +  \one_{[y_0,y_1]}(0)\frac{2(\ell_1 - \ell_0)}{\epsilon^2}\int_0^\infty (1 - e^{i\epsilon^2\beta s}) \eta^{\bar X, 1}(s)\, ds }\,.
  \end{align*}
  Notice that by switching the integrals, we find 
  \begin{align*}
  \frac{1}{\epsilon^2}\int_0^\infty(1-e^{i\beta\epsilon^2 s})\eta^{\bar X, 1}(s)\, ds &= \frac{1}{\epsilon^2}\int_0^\infty(-\beta i\epsilon^2\int_0^s e^{i\beta\epsilon^2 r}\, dr)\eta^{\bar X, 1}(s)\, ds\\
  &= \int_0^\infty e^{i\beta\epsilon^2 r} \int_r^\infty \eta^{\bar X, 1}(s)\, ds \, dr \, .
  \end{align*}
  Since $\eta^{\bar X, 1}$ has exponential tails, we can send $\epsilon \to 0$, use dominated convergence and switch the integrals again to find
  \begin{align*}
  \lim_{\epsilon\to 0}\frac{1}{\epsilon^2}\int_0^\infty(1-e^{i\beta\epsilon^2 s})\eta^{\bar X, 1}(s)\, ds =\int_0^\infty s \eta^{\bar X, 1}(s)\, ds = 1 \, 
  \end{align*}
  and hence 
  \begin{equation*}
  \Em[e^{i\beta\mu^{\bar Y,\epsilon}(A)}] \to \Em[e^{i\beta\mu^{Y}_*(A)}] \, .
  \qedhere
  \end{equation*}
\end{proof}

\subsection{Proof of the Excursion Decomposition (Proposition~\ref{p:Ztimechange}).}\label{s:Ztimechange}
To abbreviate the notation, we will now write $L^{\bar X}_t$ and $L^{\bar Y}_t$ for $L^{\bar X}_t(\epsilon \Z)$ and $L^{\bar Y}_t(0)$, respectively.
Notice that $L^{\bar X}_t$ depends on $\epsilon$ while $L^{\bar Y}_t$ does not.
Let $X^\epsilon(t) =  \bar X(\psi^{\bar X,\epsilon}(t))$ and $Y^{\epsilon}(t) = \bar Y( \psi^{\bar Y,\epsilon}(t))$.
The proof of Proposition~\ref{p:Ztimechange} follows quickly from It\^o's formula, and the following two lemmas:
\begin{lemma}\label{l:claimLtratio}
  For every $t\geq 0$, we have
  \begin{equation}\label{e:claimLtratio}
    L^{X^\epsilon}_t = \frac{2}{\alpha \epsilon} L^{Y^\epsilon}_t \,.
  \end{equation}
\end{lemma}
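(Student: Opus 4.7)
The plan is to read \eqref{e:claimLtratio} directly off the defining time-change formulas \eqref{e:psiXdef} and \eqref{e:epsiYdef}. The key fact I will invoke is that local time is equivariant under continuous non-decreasing time changes: if $\psi$ is continuous and non-decreasing and $X^\epsilon(t) = \bar X(\psi(t))$, then $L^{X^\epsilon}_t(\epsilon\Z) = L^{\bar X}_{\psi(t)}(\epsilon\Z)$, because the occupation-time formula for $X^\epsilon$ pulls back to one for $\bar X$. Applying this to both coordinates gives
\begin{equation*}
  L^{X^\epsilon}_t(\epsilon\Z) = L^{\bar X}_{\psi^{\bar X,\epsilon}(t)}(\epsilon\Z)\,,
  \qquad
  L^{Y^\epsilon}_t(0) = L^{\bar Y}_{\psi^{\bar Y,\epsilon}(t)}(0)\,.
\end{equation*}

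Next I will establish the \emph{real-time decomposition} $t = \psi^{\bar X,\epsilon}(t) + \psi^{\bar Y,\epsilon}(t)$, which expresses the splitting of real time into time spent on the spine and time spent in the teeth. This follows by unwinding \eqref{e:psiXdef}: at any continuity point of $s \mapsto s + \tau^{\bar Y}(\tfrac{\alpha\epsilon}{2} L^{\bar X}_s(\epsilon\Z))$, the infimum is attained and
\begin{equation*}
  \psi^{\bar X,\epsilon}(t) + \tau^{\bar Y}\paren[\Big]{\frac{\alpha\epsilon}{2} L^{\bar X}_{\psi^{\bar X,\epsilon}(t)}(\epsilon\Z)} = t\,.
\end{equation*}
A parallel computation from \eqref{e:epsiYdef} shows that $\psi^{\bar Y,\epsilon}(t)$ and $\tau^{\bar Y}\bigl(\tfrac{\alpha\epsilon}{2} L^{\bar X}_{\psi^{\bar X,\epsilon}(t)}(\epsilon\Z)\bigr)$ satisfy the same equation, and hence agree by uniqueness:
\begin{equation*}
  \psi^{\bar Y,\epsilon}(t) = \tau^{\bar Y}\paren[\Big]{\frac{\alpha\epsilon}{2} L^{\bar X}_{\psi^{\bar X,\epsilon}(t)}(\epsilon\Z)}\,.
\end{equation*}
Applying $L^{\bar Y}_\cdot(0)$ to both sides and using the standard identity $L^{\bar Y}_{\tau^{\bar Y}(\ell)}(0) = \ell$ (valid for all $\ell \geq 0$ since $L^{\bar Y}_\cdot(0)$ is constant on the flats of $\tau^{\bar Y}$) yields
\begin{equation*}
  L^{Y^\epsilon}_t(0) = L^{\bar Y}_{\psi^{\bar Y,\epsilon}(t)}(0) = \frac{\alpha\epsilon}{2} L^{\bar X}_{\psi^{\bar X,\epsilon}(t)}(\epsilon\Z) = \frac{\alpha\epsilon}{2} L^{X^\epsilon}_t(\epsilon\Z)\,,
\end{equation*}
which rearranges to \eqref{e:claimLtratio}.

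The main technical wrinkle is that $\tau^{\bar Y}$ and $\tau^{\bar X,\epsilon}$ are subordinators, hence discontinuous, and the infimum in \eqref{e:psiXdef} can fail to be attained at jump times of $\tau^{\bar Y}$ (which correspond exactly to excursions of $\bar Y$ into $(0,1]$, during which $\bar X$ is frozen). I would handle this by observing that both sides of \eqref{e:claimLtratio} are continuous non-decreasing functions of $t$ (since $L^{X^\epsilon}$ and $L^{Y^\epsilon}$ are continuous), so it suffices to verify the identity on the dense set of real times $t$ that lie strictly in the range of $s \mapsto s + \tau^{\bar Y}(\tfrac{\alpha\epsilon}{2} L^{\bar X}_s(\epsilon\Z))$ (equivalently, the complement of the countable set of jump intervals), and then extend to all $t$ by continuity. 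Once this is in place, the rest is a one-line manipulation.
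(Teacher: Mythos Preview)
Your proposal is correct and follows essentially the same route as the paper: reduce \eqref{e:claimLtratio} to the identity $\psi^{\bar Y,\epsilon}(t)=\tau^{\bar Y}\bigl(\tfrac{\alpha\epsilon}{2}L^{\bar X}_{\psi^{\bar X,\epsilon}(t)}\bigr)$ (equivalently, $\psi^{\bar X,\epsilon}(t)+\psi^{\bar Y,\epsilon}(t)=t$) and then apply $L^{\bar Y}\circ\tau^{\bar Y}=\mathrm{id}$. The paper makes your ``parallel computation / uniqueness'' step explicit by showing that $H(s)=\tau^{\bar Y}\bigl(\tfrac{\alpha\epsilon}{2}L^{\bar X}_s\bigr)$ and $r\mapsto\tau^{\bar X,\epsilon}\bigl(\tfrac{2}{\alpha\epsilon}L^{\bar Y}_r\bigr)$ are right-continuous inverses of each other, and then handles the jump times by a direct case analysis on intervals where $\psi^{\bar X,\epsilon}$ (or $\psi^{\bar Y,\epsilon}$) is constant, rather than your density-plus-continuity extension.
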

\begin{lemma}\label{l:jqvXY}
  The joint quadratic variation of $X^\epsilon$ and $Y^\epsilon$ is $0$.
\end{lemma}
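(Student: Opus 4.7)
The plan is to exploit the structural fact that the two time changes $\psi^{\bar X,\epsilon}$ and $\psi^{\bar Y,\epsilon}$ never increase simultaneously, so their Stieltjes measures $d\psi^{\bar X,\epsilon}$ and $d\psi^{\bar Y,\epsilon}$ are mutually singular. Combined with the Kunita--Watanabe inequality, this will force $\langle X^\epsilon, Y^\epsilon\rangle_t \equiv 0$.

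First I would write $X^\epsilon$ and $Y^\epsilon$ as semimartingales amenable to quadratic variation computations. Since $\bar X$ is a Brownian motion and $\psi^{\bar X,\epsilon}$ is a continuous non-decreasing time change, $X^\epsilon = \bar X \circ \psi^{\bar X,\epsilon}$ is a continuous local martingale with $\langle X^\epsilon\rangle_t = \psi^{\bar X,\epsilon}(t)$. Applying the Tanaka formula to $\bar Y$ on $[0,1]$ gives $\bar Y = \tilde B + L^{\bar Y}(0) - L^{\bar Y}(1)$, where $\tilde B$ is a Brownian motion independent of $\bar X$. Time-changing by $\psi^{\bar Y,\epsilon}$, the martingale part of $Y^\epsilon$ is $\tilde B \circ \psi^{\bar Y,\epsilon}$ with quadratic variation $\psi^{\bar Y,\epsilon}(t)$; the finite-variation contributions $L^{\bar Y}_{\psi^{\bar Y,\epsilon}(t)}(0) - L^{\bar Y}_{\psi^{\bar Y,\epsilon}(t)}(1)$ drop out of any joint covariation.

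The main step is to show that $d\psi^{\bar X,\epsilon}$ and $d\psi^{\bar Y,\epsilon}$ are mutually singular. By construction, $\psi^{\bar X,\epsilon}$ is the right-continuous inverse of $\phi^{\bar X}(s) := s + \tau^{\bar Y}\bigl(\tfrac{\alpha\epsilon}{2} L^{\bar X}_s(\epsilon\Z)\bigr)$. Since $\tau^{\bar Y}$ is a pure-jump subordinator, the second summand is a non-decreasing pure jump process in $s$; hence $\psi^{\bar X,\epsilon}$ is $1$-Lipschitz with derivative in $\{0,1\}$, rising at unit rate on the absolute-time intervals where $\bar X$ is ``running'' and remaining flat on the complementary intervals (of total length accumulated by $\tau^{\bar Y}$) during which a tooth excursion of $\bar Y$ is being injected. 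An entirely analogous analysis for $\phi^{\bar Y}$, using that $L^{\bar Y}(0)$ only grows when $\bar Y = 0$, shows $\psi^{\bar Y,\epsilon}$ is $1$-Lipschitz with derivative in $\{0,1\}$, flat precisely on those absolute-time intervals where $\bar X$ is running. To identify the flat intervals of $\psi^{\bar X,\epsilon}$ with the active intervals of $\psi^{\bar Y,\epsilon}$ (and conversely) I would use the complementarity $\psi^{\bar X,\epsilon}(t) + \psi^{\bar Y,\epsilon}(t) = t$, which follows from Lemma~\ref{l:claimLtratio} together with the definitions of the inverse subordinators $\tau^{\bar X,\epsilon}$ and $\tau^{\bar Y}$. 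Thus $d\psi^{\bar X,\epsilon}$ and $d\psi^{\bar Y,\epsilon}$ are carried by disjoint measurable subsets of $[0,\infty)$.

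To conclude, I would invoke the Kunita--Watanabe inequality, which yields that the signed measure $d\langle X^\epsilon, Y^\epsilon\rangle$ is absolutely continuous with respect to both $d\langle X^\epsilon\rangle = d\psi^{\bar X,\epsilon}$ and $d\langle Y^\epsilon\rangle = d\psi^{\bar Y,\epsilon}$. Since the two dominating measures are mutually singular by the previous step, $d\langle X^\epsilon, Y^\epsilon\rangle$ must vanish identically, proving the claim. I expect the hardest part to be the support identification in the preceding paragraph---verifying carefully what ``running'' and ``paused'' mean at the level of the subordinators and handling the countable set of transition times at which the underlying process jumps between the spine and a tooth.
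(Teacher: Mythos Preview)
Your approach is correct and genuinely different from the paper's. The paper argues directly at the level of trajectories: it introduces stopping times $\sigma_k,\theta_k$ that alternate between ``near a junction point'' ($Y^\epsilon$ and $d(X^\epsilon,\epsilon\Z)$ both below $\delta$) and ``away from a junction point''. On each away-interval $[\theta_{k+1},\sigma_{k+1}]$ one of the two coordinates is literally constant, so the covariation contribution vanishes; the near-intervals $[\sigma_k,\theta_{k+1}]$ contribute at most the total time spent with $|Y^\epsilon|\le\delta$ and $d(X^\epsilon,\epsilon\Z)\le\delta$, which tends to $0$ as $\delta\to0$.

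Your route instead isolates the structural reason behind this: the complementarity $\psi^{\bar X,\epsilon}(t)+\psi^{\bar Y,\epsilon}(t)=t$ (established as equation~\eqref{e:psixPlusPsiy} inside the proof of Lemma~\ref{l:claimLtratio}, not as a corollary of its statement) together with the pure-jump nature of $\tau^{\bar Y}$ forces the Lebesgue densities of $d\psi^{\bar X,\epsilon}$ and $d\psi^{\bar Y,\epsilon}$ to take values in $\{0,1\}$ and to be complementary, hence mutually singular; Kunita--Watanabe then finishes. This is cleaner conceptually and makes transparent why the covariation vanishes, whereas the paper's argument is more elementary and self-contained, needing neither the complementarity identity nor the subordinator structure. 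Note also that once you have the complementarity and $(\psi^{\bar X,\epsilon})'\in\{0,1\}$ a.e., the analogous claim for $\psi^{\bar Y,\epsilon}$ is automatic, so you need not analyze $\tau^{\bar X,\epsilon}$ separately.
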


Momentarily postponing the proof of these lemmas, we prove Proposition~\ref{p:Ztimechange}.

\begin{proof}[Proof of Proposition~\ref{p:Ztimechange}]
  For any $f \in \mathcal D(\mathcal L^\epsilon)$, It\^o's formula gives
  \begin{align*}
    \MoveEqLeft
      \E f(Z^\epsilon_t) - f(Z^\epsilon_0)
      = \frac{1}{2}\E \int_0^{\psi^{\bar X, \epsilon}(t)}
	  \partial_x^2 f( \bar X_s, \bar Y_s)
	  \one_{\bar X_s \not\in \epsilon \Z} \, ds
    \\
      &+
	\frac{1}{2} \E \int_0^t
	  \paren[\Big]{
	    \partial_x f( (X^\epsilon_s)^+, Y^\epsilon_s )
	      - \partial_x f( (X^\epsilon_s)^-, Y^\epsilon_s )
	    }
	    \, dL^{X^\epsilon}_s(\epsilon \Z)
      \\
      &+ \frac{1}{2}\E \int_0^{\psi^{\bar Y, \epsilon}(t)}
	  \partial_y^2 f( \bar X_s, \bar Y_s)
	  \one_{\bar Y_s \in (0, 1)} \, ds
	+ \E \int_0^t
	  \partial_y f( X^\epsilon_s, (Y^\epsilon_s)^+ )
	  \, dL^{Y^\epsilon}_s(0)\,.
  \end{align*}
  Here we used the fact that $\qv{X^\epsilon, Y^\epsilon} = 0$ (Lemma~\ref{l:jqvXY}) and $\partial_y f(x, 1) = 0$ (which is guaranteed by the assumption $f \in \mathcal D( \mathcal L^\epsilon)$).
  Using~\eqref{e:claimLtratio} this simplifies to
  \begin{multline*}
      \E f(Z^\epsilon_t) - f(Z^\epsilon_0)
      = \E \int_0^{\psi^{\bar X, \epsilon}(t)}
	  \partial_x^2 f( \bar X_s, \bar Y_s)
	  \one_{\bar X_s \not\in \epsilon \Z} \, ds
	  \\
	+ \E \int_0^{\psi^{\bar Y, \epsilon}(t)}
	  \partial_y^2 f( \bar X_s, \bar Y_s)
	  \one_{\bar Y_s \in (0, 1)} \, ds
      \\
	+ \frac{1}{2}\E \int_0^t
	  \paren[\Big]{
	    \partial_x f( (X^\epsilon_s)^+, Y^\epsilon_s )
	      - \partial_x f( (X^\epsilon_s)^-, Y^\epsilon_s )
	      + \alpha \epsilon
		\partial_y f( X^\epsilon_s, (Y^\epsilon_s)^+ )
	    }
	    \, dL^{X^\epsilon}_s(\epsilon \Z) \,.
  \end{multline*}
  Since $f \in \mathcal D( \mathcal L^\epsilon)$ and $L^{X^\epsilon}$ only increases when $Y^\epsilon = 0$ and $X^\epsilon \in \epsilon \Z$, the last integral above vanishes.
  Consequently,
  \begin{equation*}
    \lim_{t \to 0} \frac{1}{t} \E \paren[\big]{ f(Z^\epsilon_t) - f(Z^\epsilon_0) } = \mathcal L^\epsilon f(0, 0)\,
  \end{equation*}
  showing that the generator of~$Z^\epsilon$ is $\mathcal L^\epsilon$ as claimed.
  The fact that $Z^\epsilon$ satisfies~\eqref{e:sdeXep} and~\eqref{e:sdeYep} follows immediately by choosing $f(x, y) = x$ and $f(x, y) = y$ respectively.
\end{proof}

It remains to prove Lemmas~\ref{l:claimLtratio} and~\ref{l:jqvXY}.
\begin{proof}[Proof of Lemma~\ref{l:claimLtratio}]
  We first claim that for any $t \geq 0$, we have
  \begin{equation}\label{e:psixPlusPsiy}
    \psi^{\bar X, \epsilon}(t) + \psi^{\bar Y, \epsilon}(t) = t\,.
  \end{equation}
  To see this, define the non-decreasing, right continuous function 
  \begin{equation*}
    H(t) \defeq \tau^{\bar Y} \paren[\Big]{ \frac{\alpha \epsilon}{2} L^{\bar X}_t( \epsilon \Z ) }\,.
  \end{equation*}
Using the properties of $\tau^{\bar Y}$, $L^{\bar X}$, $\tau^{\bar X,\epsilon}$, and $L^{\bar Y}$, it is easy to check that the right continuous inverse of $H$ is
\[
H^{-1}(t) = \inf \{ s > 0 \;|\;\; H(s) > t \} = \tau^{\bar X,\epsilon} \left( \frac{2}{\alpha \epsilon} L^{ \bar Y}_s(0) \right).
\]
Therefore, $\psi^{\bar X, \epsilon}$ and $\psi^{\bar Y, \epsilon}$ are the right continuous inverse functions of $t \mapsto t + H(t)$ and $t \mapsto t + H^{-1}(t)$, respectively, meaning that
\begin{align}
\psi^{\bar X, \epsilon}(t) & =  \inf \left \{ s \;|\;\; s + H(s) > t \right\}, \no \\
\psi^{\bar Y, \epsilon}(t) & = \inf \left \{ r \;|\;\; r + H^{-1}(r) > t \right\}. \no
\end{align}
In general, $H(H^{-1}(r)) \geq r$ and $H^{-1}(H(s)) \geq s$ must hold, but equality may not hold due to possible discontinuities in $H$ and $H^{-1}$.

Fix $t > 0$, and let $[t_0,t_1]$ be the maximal interval such that $t \in [t_0,t_1]$ and $\psi^{\bar X, \epsilon}$ is constant on the interval $[t_0,t_1]$.  Possibly $t_0 = t_1 = t$, but let us first suppose that the interval has non-empty interior, $t_0 < t_1$. This implies that $H(s)$ has a jump discontinuity at a point $s = \psi^{\bar X, \epsilon}(t_1)$ such that $s + H(s^-) = t_0$ and $s + H(s^+) = s + H(s) = t_1$.  Also, $H^{-1}(H(s)) = s$ must hold for such a value of $s$. So, for $\ell = H(s) = H(\psi^{\bar X, \epsilon}(t_1))$ we have
\[
\ell + H^{-1}(\ell) = H(s) + s = t_1.
\]
Therefore, $\psi^{\bar Y, \epsilon}(t_1) = \ell$, since
\[
\psi^{\bar Y, \epsilon}(t_1) = \inf \left \{ r \;|\;\; r + H^{-1}(r) > t_1 \right\}.
\]
This means that $\psi^{\bar Y, \epsilon}(t_1) = H(s)$.  Therefore, 
\[
\psi^{\bar Y, \epsilon}(t_1) + \psi^{\bar X, \epsilon}(t_1)  = H(s) + s = t_1
\]
must hold.  Now let extend the equality to the rest of the interval $[t_0,t_1]$. By assumption, $\psi^{\bar X, \epsilon}(t) = \psi^{\bar X, \epsilon}(t_1)$ for all $t \in [t_0,t_1]$.  Since $H$ has a jump discontinuity at $s$, this means $H^{-1}(r)$ is constant on the interval $[H(s^-),H(s)]$.  Hence, the function $r + H^{-1}(r)$ is affine with slope 1 on the interval $[H(s^-),H(s)] = [\psi^{\bar Y, \epsilon}(t_1) - (t_1 - t_0), \psi^{\bar Y, \epsilon}(t_1)]$. Therefore, for all $t \in [t_0,t_1]$, we must have
\[
\psi^{\bar Y, \epsilon}(t) = \psi^{\bar Y, \epsilon}(t_1) + t - t_1.
\]
This shows that for all $t \in [t_0,t_1]$, we have
\begin{align*}
\psi^{\bar X, \epsilon}(t) + \psi^{\bar Y, \epsilon}(t) = \psi^{\bar X, \epsilon}(t_1) + \psi^{\bar Y, \epsilon}(t_1) + t - t_1 = t.
\end{align*}

Applying the same argument with the roles of $\psi^{\bar X, \epsilon}$, $\psi^{\bar Y, \epsilon}$, $H$ and $H^{-1}$ reversed, we conclude that $\psi^{\bar X, \epsilon}(t) + \psi^{\bar Y, \epsilon}(t) = t$ must hold if either $\psi^{\bar X, \epsilon}$ or $\psi^{\bar Y, \epsilon}$ is constant on an interval containing $t$ which has non-empty interior.  The only other possibility is that both $\psi^{\bar X, \epsilon}$ and $\psi^{\bar Y, \epsilon}$ are strictly increasing through $t$. In this case, $H$ must be continuous at $\psi^{\bar X, \epsilon}(t)$ and $H^{-1}$ must be continuous at $\psi^{\bar Y, \epsilon}(t)$. Thus, $H^{-1}(H(\psi^{\bar X, \epsilon}(t)))= \psi^{\bar X, \epsilon}(t)$ and $H(H^{-1}(\psi^{\bar Y, \epsilon}(t))) = \psi^{\bar Y, \epsilon}(t)$ holds.  The rest of the argument is the same as in the previous case.
This proves~\eqref{e:psixPlusPsiy}.

  Now, since $X^\epsilon$ and $Y^\epsilon$ are time changes of $\bar X$ and $\bar Y$ respectively, we know that the local times are given by
  \begin{equation*}
    L^{X^\epsilon}_t
    \defeq L^{X^\epsilon}( \epsilon \Z)
    =  L^{\bar X}_{\psi^{\bar X,\epsilon}(t)}\,,
    \quad\text{and}\quad
    L^{Y^\epsilon}_t
    \defeq L^{Y^\epsilon}(0)
    = L^{\bar Y}_{\psi^{\bar Y,\epsilon}(t)} \,.
  \end{equation*}
  By definition of $\psi^{\bar X, \epsilon}$, we know
  \begin{equation*}
    t = \psi^{\bar X, \epsilon}(t)
      + \tau^{\bar Y} \paren[\Big]{
	  \frac{\alpha \epsilon}{2}
	  L^{\bar X}(\psi^{\bar X, \epsilon}(t))
	}\,.
  \end{equation*}
  Using~\eqref{e:psixPlusPsiy} this gives
  \begin{equation*}
    \psi^{\bar Y, \epsilon}(t)
      = \tau^{\bar Y} \paren[\Big]{
	  \frac{\alpha \epsilon}{2}
	  L^{\bar X}(\psi^{\bar X, \epsilon}(t))
	}\,,
  \end{equation*}
  and using the fact that $\tau^{\bar Y}$ is the inverse of $L^{\bar Y}$, we get~\eqref{e:claimLtratio} as desired.
\end{proof}

\begin{proof}[Proof of Lemma~\ref{l:jqvXY}]
 Fix $\delta > 0$, and define a sequence of stopping times $0 = \sigma_0 < \theta_1 < \sigma_1 < \theta_2 < \sigma_2 < \dots$ inductively, by
\begin{align}
\sigma_0 & = 0 \no \\
\theta_{k+1} & = \inf \left \{ t > \sigma_k \;|\; \text{either $Y^\epsilon_t = \delta$ or $d(X^\epsilon_t,\epsilon \mathbb{Z}) = \delta$} \right\}, \quad k = 0,1,2,3,\dots \no \\
\sigma_{k+1} & = \inf \left \{ t > \theta_k \;|\; \text{$Y_t = 0$ and $X^\epsilon_t \in \epsilon \mathbb{Z}$} \right\}, \quad k = 0,1,2,3,\dots \no
\end{align}

Then for $T > 0$, we decompose the joint quadratic variation over $[0,T]$ as
\[
\qv{X^\epsilon,Y^\epsilon}_{[0,T]} = \sum_{k \geq 0} \qv{X^\epsilon,Y^\epsilon}_{[\sigma_k \wedge T, \theta_{k+1} \wedge T]} + \qv{X^\epsilon,Y^\epsilon}_{[\theta_{k+1} \wedge T,\sigma_{k+1} \wedge T]}.
\]
We claim that for all $k$, 
\begin{equation}
\qv{X^\epsilon,Y^\epsilon}_{[\theta_{k+1} \wedge T,\sigma_{k+1} \wedge T]} = 0 \label{qvarzero}
\end{equation}
holds with probability one. Hence,
\begin{align}
\left|\qv{X^\epsilon,Y^\epsilon}_{[0,T]} \right| & \leq  \sum_{k \geq 0} \left| \qv{X^\epsilon,Y^\epsilon}_{[\sigma_k \wedge T, \theta_{k+1} \wedge T]} \right| \no \\
& \leq \sum_{k \geq 0} \frac{1}{2} \qv{X^\epsilon,X^\epsilon}_{[\sigma_k \wedge T, \theta_{k+1} \wedge T]} + \frac{1}{2} \qv{Y^\epsilon,Y^\epsilon}_{[\sigma_k \wedge T, \theta_{k+1} \wedge T]}  \no \\
& \leq \sum_{k \geq 0}|( \theta_{k+1} \wedge T )-  (\sigma_k \wedge T)| \no \\
& \leq \left| \left\{ t \in [0,T] \;|\; \;\; |\bar Y_t| \leq \delta, \;\;\text{and} \;\; d(\bar X_t, \epsilon \mathbb{Z}) \leq \delta \quad \right\} \right|.
\end{align}
As $\delta \to 0$, the latter converges to $0$ almost surely, which proves that $\qv{X^\epsilon,Y^\epsilon} = 0$.

To establish the claim \eqref{qvarzero}, we may assume $\theta_k < T$, for otherwise, the statement is trivial. At time $\theta_k$, we have either $X^\epsilon_{\theta_k} \notin  \epsilon \mathbb{Z}$ or $Y_{\theta_k} = \delta$.  In the former case, we must have $X_t \notin \epsilon \mathbb{Z}$ for all $t \in [\theta_k,\sigma_k)$.  Hence, $\psi^{\bar Y,\epsilon}(t)$ and $Y^\epsilon_t$ are constant for all $t \in [\theta_k,\sigma_k)$.  In the other case, $Y_t > 0$ for all $t \in [\theta_k,\sigma_k)$ while $X_t$ is constant on $[\theta_k,\sigma_k]$.  In either case, this implies that $\qv{X^\epsilon,Y^\epsilon}_{[\theta_k \wedge T, \sigma_k \wedge T]} = 0$ holds with probability one.
\end{proof}

\bibliographystyle{halpha-abbrv}
\bibliography{refs,preprints}
\end{document}